%% file: main-preprint.tex
\documentclass[10pt]{article}
\usepackage[letterpaper,margin=1in]{geometry}
\usepackage[utf8]{inputenc}
\usepackage[T1]{fontenc}
\usepackage{amsmath}
\usepackage{amssymb}
\usepackage{amsthm}
\usepackage{amscd}
\usepackage{graphicx}
\usepackage{rotating}
\usepackage{float}
\usepackage{booktabs}
\usepackage{enumitem}
\usepackage{cancel}
\usepackage{url}
\providecommand{\href}[2]{#2}
\providecommand{\texorpdfstring}[2]{#1}
\input{comandos}

\renewcommand{\C}{\bb C}
\DeclareMathOperator{\diam}{diam}
\newtheorem{theorem}{Theorem}[section]
\newtheorem{proposition}[theorem]{Proposition}
\newtheorem{lemma}[theorem]{Lemma}
\newtheorem{corollary}[theorem]{Corollary}

\newtheorem{remark}[theorem]{Remark}
\newtheorem{definition}[theorem]{Definition}

\input{enviroments}

\title{Hierarchical Expansion of Finite Discrete Dynamical Systems: A Non-Archimedean Variational Theory over Coordinate Orderings}

\author{J.~Rogelio P\'erez-Buend\'ia\thanks{SECIHTI--CIMAT, Unidad M\'erida, M\'erida, Yucat\'an, Mexico. Corresponding author: \texttt{rogelio.perez@cimat.mx}. ORCID 0000-0002-7739-4779.}
\and V\'ictor Nopal-Coello\thanks{Universidad Aut\'onoma del Estado de Hidalgo, Pachuca, Hidalgo, Mexico. \texttt{victor\_nopal@uaeh.edu.mx}. ORCID 0000-0003-2608-3636.}}
\date{}

\begin{document}

\maketitle

\begin{abstract}
A finite dynamical system on $\mathbb{F}_p^N$ has a functional graph and combinatorial basins, but nothing on which to run the local analysis a multiplier makes possible. We supply one. Orderings encode configurations as residue classes in $\mathbb{Z}_p$, and an exact rational interpreter over $\mathbb{C}_p$ carries each ball onto its image ball. Rational approximants send, resolution by resolution, every ball onto one of prescribed radius and center. A half-integer pole sphere, with no Archimedean counterpart, settles the expanding case. The prescribed radii are the missing multipliers. They sort balls into contracting, expanding or isometric, giving each configuration a word over three letters, one per scale. Fixed configurations all satisfy $f(a)=a$, yet their words refine them into scale-resolved classes. Contracting balls containing their images trap unique attracting fixed points of the approximant, expanding balls inside their images unique repelling ones. The counts become scores $(\mu_E,\mu_A,\mu_I)$, Haar integrals over $\mathbb{Z}_p$, and minimizing $\mu_E$ over orderings is the finite variational principle. The first score vanishes exactly on Anashin's $p$-adic automata, grading departures by resolution, while the ceiling $(N-1)p^N$ at every ordering isolates the $28$ outer-permutive rules among elementary cellular automata on rings of $N\ge5$ cells. The transition table and ordering determine radii, letters and scores. On a thirteen-gene Boolean network we certify the minimizer over all $13!$ orderings, whose hierarchy separates the ten fixed states by type.
\end{abstract}

\noindent\textbf{Keywords:}\ finite discrete dynamical systems, non-Archimedean dynamics, expansion functional, coordinate orderings, $1$-Lipschitz maps, automata functions.
\par\smallskip
\noindent\textbf{Mathematics Subject Classification (2020):}\ 11S82, 37P20, 37B15, 68Q45, 37N25.
\par\medskip

\input{01_introduction}
\input{02_setup}
\input{04_hierarchical_framework}
\input{05_toy_example}
\input{06_stability_measure}

\input{06b_structure_functional}
\input{07_application_athaliana}
\input{08_discussion}

\section*{ACKNOWLEDGMENTS}
The authors are affiliated with the P-ADAGIO research group (P-adic Arithmetic, Dynamics And Galois-Informed Observations), and thank its members for their support. We thank Esteban S\'anchez Dur\'an for his contributions in the early stages of this project. The authors acknowledge the Supercomputing Center of CIMAT-M\'erida ``TOOLOK'' for computing time.

\section*{FUNDING}
This work was supported by the project ``Modelos matem\'aticos y computacionales no convencionales para el estudio y an\'alisis de problemas relevantes en Biolog\'ia'' (CF 2019/217367), funded by SECIHTI.

\section*{CONFLICT OF INTEREST}
The authors declare that they have no conflicts of interest.

\section*{SUPPLEMENTARY INFORMATION}
Supplementary material accompanies this article. It contains the saturation theorem for elementary cellular automata and its proof, the target-ball data for the hierarchical approximants of the \textit{A.\ thaliana} network, the worked toy model with the target-ball specifications of its approximants and the gluing scheme that produces them, the count of the maps in the vanishing locus, the biological reading of the \textit{A.\ thaliana} example, the consistency estimate for consecutive approximants, and the $A/E/I$ landscape. The transition tables, search logs and scripts are in the reproducibility bundle~\cite{padic_grn_bundle}.

\section*{DATA AND CODE AVAILABILITY}
The transition table of the network studied here, the code that computes $\mu_E$, $\mu_A$ and $\mu_I$, the exact search over $S_N$, and a set of verification scripts are deposited in the versioned Zenodo record~\cite{padic_grn_bundle}, archived separately from the submission files. The verification scripts recompute the three scores and their control identity, reproduce the counterexample of Remark~\ref{rem:submodularity_negative}, extract the dependency digraph of Section~\ref{sec:athaliana} from the transition table, recover the minimum over $S_{13}$ by a second exact route that does not enumerate orderings, and carry out the survey of the $256$ elementary rules behind Theorem~\ref{thm:eca_saturation_main}. No other data were generated or analyzed.

\bibliographystyle{plain}
\bibliography{references}

\end{document}

%% file: comandos.tex
\newcommand{\bb}[1]{\mathbb{#1}}

\providecommand{\C}{\bb C}

\newcommand{\Qp}{{\bb{Q}}_{p}}

\mathchardef\mslash="202F

\def\DCP{\mathfrak{D}}

\def\BCP{\mathfrak{B}}
\def\BZP{{B}}

%% file: enviroments.tex
\makeatletter
\@ifundefined{Conjecture}{}{}
\@ifundefined{Problem}{}{}
\@ifundefined{Question}{}{}
\makeatother

%% file: 01_introduction.tex
\section{Introduction}
\label{sec:introduction}

Iteration of maps on $\mathbb{Z}_p$ that respect the $p$-adic hierarchy is a developed subject, with the $1$-Lipschitz maps, equivalently the functions computed by $p$-adic automata, at its center~\cite{Anashin2009,anashin2021automata_chapter}. This paper places a finite discrete dynamical system $f\colon\mathbb{F}_p^N\to\mathbb{F}_p^N$ inside that setting, and the passage requires a choice. Reading the $N$ coordinates in one order rather than another sends the same map into a different multiscale geometry, while scalar invariants such as entropy or average sensitivity stay fixed under every permutation of the variables. Many finite systems are hierarchical, with some coordinates determining coarse alternatives and others refining outcomes already selected. We make the ordering a variable of the problem and measure expansion across scales.

The construction starts with a choice of ordering. It embeds $\mathbb{F}_p^N$ into $\mathbb{Z}_p$ by base-$p$ expansion, so that a configuration becomes the ball of all $p$-adic integers sharing its first $N$ digits. Agreement in the first $n$ ordered coordinates means membership in a common ball of radius $p^{-n}$. Changing the ordering changes which variables control the coarsest balls, and hence the multiscale geometry in which the same finite map is observed.

The non-Archimedean layer then realizes this hierarchy analytically. Using rational dynamics over $\mathbb{C}_p$ we interpret the finite map on its configuration balls and prove that, at each resolution, there is a rational approximant $F_n\in\mathbb{C}_p(z)$ whose image of every source ball has the center and the radius prescribed by the transition data alone (Theorem~\ref{thm:existence_approximations}). The construction returns a resolution-dependent family, and the freedom in the choice of interpreter is what makes that family possible. Each $F_n$ realizes exactly the transition information visible at scale $n$, and successive approximants resolve the hierarchy at finer scales, so the same configuration is observed at $N$ nested resolutions and carries a type at each one. The expanding case is where the argument becomes genuinely non-Archimedean: a rational localization factor places its poles on a sphere of half-integer radius, which misses $\mathbb{Z}_p$.

This realization turns combinatorial structure into an analytic register. The functional graph of a finite system partitions its state space into the combinatorial basins of its periodic orbits, recording where each configuration eventually lands. The ball hierarchy supplies complementary information: it measures how much of the image is already determined by the first $n$ ordered coordinates. From the transition map and the embedding we define, for each ball, a common-prefix exponent $M_{n,m}$, an image radius $t_{n,m}=p^{-M_{n,m}}$, and a coarse multiplier $\Lambda_{n,m}=p^{n-M_{n,m}}$, the ball-level analogue of $|\phi'(x_0)|_p$. These quantities classify every ball as expanding, contracting, or isometric.

On nested ball chains the coarse classification becomes local fixed-point dynamics. A contracting ball containing its own image carries a unique attracting fixed point of the corresponding approximant, and every point of the ball converges to it, so the ball is a trapping neighborhood inside the analytic basin of that point. An expanding ball contained in its own image carries a unique repelling fixed point. In both cases the coarse multiplier is the exact multiplier at that fixed point, $|F_n'(b)|_p=\Lambda_{n,m}$, which is what makes $\Lambda_{n,m}$ the ball-level analogue it was introduced to be. If an isometric ball is carried onto itself, the approximant acts on it as an isometry and every fixed point it contains is indifferent, while existence depends on the map.

For a fixed configuration $a$ the nesting is automatic: the source ball and its image both contain the microball labeled by $a$, so they meet, and ultrametricity places them in one of the three cases above. Reading the label of that ball at every resolution produces a word over the three types, one letter per scale. The word is a scale-resolved invariant of the ball chain of $a$, and it distinguishes fixed configurations that the equation $f(a)=a$ alone leaves indistinguishable. It is computed from $(f,\iota)$, while Theorem~\ref{thm:existence_approximations} realizes its letters as image-radius ratios of rational maps on non-Archimedean balls. In the trapping and covering cases the attracting and repelling fixed points that accompany those letters may vary with the resolution and with the approximant chosen.

The weighted counts of the three classes define scores $(\mu_E,\mu_A,\mu_I)$, and minimizing $\mu_E$ over the symmetric group $S_N$ is a finite variational principle on coordinate hierarchies. The weight $p^{N-n}$ is $p^N$ times the normalized Haar mass of a level-$n$ ball, so the weighting comes from the $p$-adic hierarchy being measured, and it turns the score into a Haar integral, $\mu_E(\pi)=p^N\int_{\mathbb{Z}_p}\Phi^{E}_\pi\,d\lambda_{\mathrm{H}}$, whose integrand counts at each point the scales at which the lift fails to be $1$-Lipschitz. Vanishing of the functional is the classical automaton condition, stated as Theorem B below. Anashin developed the $p$-adic theory of these automata and characterized measure preservation and ergodicity inside the $1$-Lipschitz class~\cite[Theorem~5.6]{anashin2021automata_chapter}. What the functional adds to that equivalence is a graded form of it. Where that theory quantifies, it does so by a single exponent, the class $\mathrm{Lip}_\alpha$ of maps that lose at most $\alpha$ digits at every scale, refined in several variables to one exponent per variable~\cite{leoncardenal2022lipschitz}, and such an exponent bounds the largest loss a map ever produces. The functional $\mu_E$ records instead at which resolutions and on which balls the condition fails. Minimizing therefore asks how close a finite system can be brought to a $p$-adic automaton by reordering its coordinates, and the minimum is $0$ exactly for the systems whose dependency digraph is acyclic once loops are removed, with the topological orders as minimizers.

Taken together across all $N$ resolutions, these labels describe a landscape that Section~\ref{sec:structure_functional} makes precise. As a worked example we apply the variational principle to an archived $13$-variable Boolean transition map based on the floral regulatory network of \textit{Arabidopsis thaliana}, which was not built for the present purpose. The minimum $\mu_E=26{,}776$ is certified over all $13!$ orderings, against $80{,}704$ at the ordering in which the model is usually written. No gene label or prior ranking enters the computation, and it places first the one coordinate the dynamics conserves, then three documented regulators of the floral transition. At the optimum the ball chains of the ten fixed configurations split into two scale-resolved classes, six of them entering contracting level-$4$ balls and four entering expanding ones.

\paragraph{The three main results.}
Write $\widehat{F}_\pi$ for the lift of $f$ to $\mathbb{Z}_p$ determined by an ordering $\pi\in S_N$. The first result is the realization itself, and it holds for every finite system and every ordering.

\medskip
\noindent\textbf{Theorem A} (Rational realization and hierarchical approximation; Proposition~\ref{prop:exact_existence}, Theorem~\ref{thm:existence_approximations} and Proposition~\ref{Prop:AproxFnPhi})\textbf{.} \emph{Let $f\colon\mathcal{C}\to\mathcal{C}$ be any map and fix a coordinate ordering. There is a rational $\phi\in\mathbb{C}_p(z)$ carrying the ball of each configuration onto the ball of its image, and, for every resolution $n$, a rational $F_n\in\mathbb{C}_p(z)$ whose image of each level-$n$ ball is the ball of prescribed radius and center determined by the transition data alone. On the observable trace of that ball the two maps agree to within the radius of the prescribed image ball.}
\medskip

\noindent The two levels do different work. Projection from the configuration balls semiconjugates the interpreter to $f$ (Remark~\ref{rem:semiconjugacy}), so the finite system is the shadow of a rational one, while each $F_n$ realizes exactly the ball dynamics visible at resolution $n$. The multipliers, the $A/E/I$ labels and the functional are read off that family. The second result says what the zero locus of the functional is.

\medskip
\noindent\textbf{Theorem B} (Theorem~\ref{thm:muE_zero_lipschitz})\textbf{.} \emph{For a map $f$ and an ordering $\pi$, the following are equivalent: $\mu_E(\pi)=0$; no ball of the hierarchy expands; $f$ is triangular for $\pi$; and $\widehat{F}_\pi$ is $1$-Lipschitz, that is, an automaton function on the alphabet $\mathbb{F}_p$.} The full statement lists six equivalent conditions.
\medskip

\noindent The third determines the opposite extreme, where no ordering helps at all, and identifies the elementary cellular automata that reach it.

\medskip
\noindent\textbf{Theorem C} (Theorems~\ref{thm:muE_saturation} and~\ref{thm:eca_saturation_main})\textbf{.} \emph{One has $\mu_E(\pi)\le(N-1)p^N$ for every ordering, and equality holds for every $\pi\in S_N$ if and only if no set of coordinates $J$ with $1\le|J|\le N-1$ is autonomous at any $u\in\mathbb{F}_p^{J}$. Among the elementary cellular automata on a ring of $N\ge5$ cells these are exactly the $28$ rules that are permutive in an outer variable.}
\medskip

\noindent Theorem A equips every chosen coordinate hierarchy with rational realizations and ball-level multipliers. Theorem B identifies the zero endpoint of the resulting expansion scale with the classical automaton condition, and Theorem C identifies its saturation endpoint and the finite maps that occupy it.

\paragraph{Contributions and relation to prior work.}
The exact interpreter of the first assertion of Theorem~A is supplied by the gluing construction of~\cite{rogelio2023gluing}. The resolution-dependent family $F_0,\dots,F_N$ is developed here, with the center and the radius of every image ball prescribed simultaneously from the finite transition data, and with the expanding case handled by the half-integer pole sphere already described. Those radii produce the ball-level multipliers $\Lambda_{n,m}$, the $A/E/I$ profiles, the functional $\mu_E$ with its Haar representation, and the variational problem over $S_N$. In Theorem~B the equivalence of $1$-Lipschitz maps, triangular maps and automaton functions is Anashin's~\cite{Anashin2009,anashin2021automata_chapter}. What is added here is that this condition is the zero locus of a functional grading its failure resolution by resolution, together with Corollary~\ref{cor:min_zero_dag}, which identifies the existence of a zero-score ordering with acyclicity of the dependency digraph and returns every minimizer as a topological order of it. What the minimization costs, and under which presentation of $f$, is taken up in Section~\ref{sec:discussion}. Outer permutivity is classical~\cite{hedlund1969endomorphisms}, and Theorem~C establishes the universal bound, characterizes its saturation by the absence of nontrivial coordinate subsets autonomous at even one partial state, and shows that for the elementary rules that criterion selects precisely $28$ of the $256$. Between the two endpoints, Proposition~\ref{prop:intermediate_constant} characterizes when the expansion score is independent of the ordering: $\mu_E$ is constant on $S_N$ if and only if the nonautonomy count of a coordinate set depends only on its cardinality. It also exhibits a sharp constant intermediate regime, so order-independent expansion scores occur at the minimum, at the maximum, and at genuine values between them.

The paper is organized as follows. Section~\ref{sec:setup} fixes the non-Archimedean setup, the coarse embedding into $\mathbb{Z}_p$, and the closed-ball bridge that relates trapping, covering and isometric ball dynamics to attracting, repelling and, when they exist, indifferent fixed points. Section~\ref{sec:hierarchical_framework} constructs rational interpreters, hierarchical approximants with canonical target balls, and the ball-level classification. Section~\ref{sec:toy_example} runs a small example, with details in the Supplementary Material. Section~\ref{sec:stability_measure} defines the scores and states the variational problem, and Section~\ref{sec:structure_functional} proves what the expansion functional measures, from its Haar integral form to its vanishing and saturation loci. Section~\ref{sec:athaliana} works out the \textit{A. thaliana} example and Section~\ref{sec:discussion} discusses scope and open problems.

%% file: 02_setup.tex
\section{Non-Archimedean Setup and the Coarse Embedding}
\label{sec:setup}
\label{sec:preliminaries}

We work throughout over $\mathbb{C}_p$, the completion of an algebraic closure of $\mathbb{Q}_p$, which plays for $p$-adic dynamics the role that $\mathbb{C}$ plays in the complex theory. We assume the standard non-Archimedean vocabulary and recall only what the construction below uses. Standard references are~\cite{benedetto2019dynamics,Zuniga-Galindo2022,zunigagalindo2025padicbook}. We denote by $\BCP_r(a)$ the closed ball of radius $r$ about $a$ in $\mathbb{C}_p$ and by $\DCP_r(a)$ the open disk, in fraktur. Their traces on the observable points $\mathbb{Z}_p$ are written with an italic $B$, as fixed in Remark~\ref{rem:ball_notation} below. When $r\notin p^{\mathbb{Q}}$ the open and closed balls of radius $r$ coincide. Recall one ultrametric fact, which we use quantitatively: if $\BCP_r(c)$ and $\BCP_s(d)$ are disjoint, then $|x-y|_p=|c-d|_p$ for all $x\in\BCP_r(c)$ and $y\in\BCP_s(d)$.

\begin{remark}[Notation: analytic versus observable balls]
\label{rem:ball_notation}
Two kinds of ball run through the paper and the typography is the only thing that separates them.
Fraktur denotes the analytic object and italic its observable trace:
\[
\BCP_r(a)\subset\C_p \quad\text{(analytic ball)},\qquad
\BZP_r(a):=\BCP_r(a)\cap\mathbb{Z}_p \quad\text{(observable ball)} .
\]
The $p^N$ micro-balls centered at the integer representatives cover $\mathbb{Z}_p$ and fall far short of covering a ball of $\C_p$, whose residue field is infinite. Approximate comparisons are stated on observable balls $\BZP$, exact image equalities on analytic balls $\BCP$.
\end{remark}

\subsection{Observable Points, Balls, and Images}

Among the points of $\mathbb{C}_p$ we single out $\mathbb{Z}_p$ as the \emph{observable points}, the states the model resolves. The \emph{observable ball} attached to a $\mathbb{C}_p$-ball $\BCP_r(a)$ is its trace $\BCP_r(a)\cap\mathbb{Z}_p$, written $\BZP_r(b)$ for any $b$ in it. For $x\in\mathbb{Z}_p$ and $n\ge 1$ the unique $m\in\{0,\dots,p^n-1\}$ with $x\in\BZP_{1/p^n}(m)$ collects the first $n$ digits of the expansion of $x$, so under $\mathbb{Z}_p=\varprojlim_k\mathbb{Z}/p^k\mathbb{Z}$ the ball $\BZP_{1/p^n}(m)$ is the residue class $x\equiv m\pmod{p^n}$ and carries mass $\lambda_{\mathrm{H}}(\BZP_{1/p^n}(m))=p^{-n}$ for the normalized Haar measure $\lambda_{\mathrm{H}}$ on $\mathbb{Z}_p$. The ordering chosen in Section~\ref{subsec:embedding} is what decides which coordinate controls which level of that hierarchy.

\paragraph{Observable image of a rational map.}
For a nonconstant $\phi \in \C_p(z)$ and a ball $\BCP_r(a)$ that meets $\mathbb{Z}_p$, two images must be distinguished. The \emph{pointwise image} $\phi\big(\BCP_r(a)\cap\mathbb{Z}_p\big)$ is the literal image of the observable points of the ball, and it need not consist of observable points, since $\phi$ may carry points off $\mathbb{Z}_p$. The \emph{observable image} $\phi\big(\BCP_r(a)\big)\cap\mathbb{Z}_p$ is the trace on $\mathbb{Z}_p$ of the image ball. By Proposition~\ref{prop:balls_to_balls} below, that image is a ball whenever it is not all of $\mathbb{P}^1(\C_p)$, which is the case as soon as $\phi$ has no pole on $\BCP_r(a)$ and the image is therefore bounded. The observable image is then again an observable ball whenever it is nonempty. The pointwise image lies inside the image ball, so the observable points it contains lie in the observable image, but the two sets differ in general. The constructions of Section~\ref{sec:hierarchical_framework} track the observable image, not the pointwise one.

\subsection{The Coarse Embedding}
\label{subsec:embedding}
\label{sec:discrete_model}

The framework applies to any finite system whose state space is identified with $\mathcal{C}=\mathbb{F}_p^N$ for a prime $p$: Boolean networks ($p=2$), cellular automata over a prime alphabet, polynomial endomorphisms of $\mathbb{F}_p^N$, and synchronous finite-state machines are all instances. We write $f\colon\mathcal{C}\to\mathcal{C}$ for the transition map. A choice of ordering of the $N$ coordinates encodes each configuration $a=(a_0,\dots,a_{N-1})\in\mathcal{C}$ as the integer center
\begin{equation}
\label{eq:padic_encoding}
m_a := \sum_{i=0}^{N-1} a_i p^i \in \{0,\ldots,p^N-1\},
\end{equation}
its base-$p$ value, an integer in $\{0,\ldots,p^N-1\}$. Reducing modulo $p^N$ sends $m_a$ to its class in $\mathbb{Z}/p^N\mathbb{Z}$, and the canonical isomorphism $\mathbb{Z}/p^N\mathbb{Z}\cong\mathbb{Z}_p/p^N\mathbb{Z}_p$ induced by $\mathbb{Z}\hookrightarrow\mathbb{Z}_p$ identifies that class with the coset
\begin{equation}
\label{eq:ball_association}
\iota(a) := m_a + p^N\mathbb{Z}_p \;=\; \BZP_{1/p^N}(m_a),
\end{equation}
the closed ball of radius $1/p^N$ centered at $m_a$. Two configurations land in the same ball if and only if their first $N$ ordered digits agree, so the encoding is faithful and its scales are exactly the coordinate hierarchy. The map $\iota\colon\mathcal{C}\xrightarrow{\ \sim\ }\mathbb{Z}_p/p^N\mathbb{Z}_p$
is therefore a bijection of the $p^N$ configurations onto the $p^N$ residue classes, equivalently the $p^N$ balls of radius $1/p^N$. We call it the \emph{coarse embedding} and write $\BZP(a) := \iota(a)$. A configuration is identified not with a point but with the class of all $p$-adic integers sharing its first $N$ digits. The embedding carries the dynamics, not merely the states: the rational interpreter $\phi$ of Section~\ref{sec:hierarchical_framework} realizes $f$ on these classes, mapping the ball of $a$ into the ball of $f(a)$ (Definition~\ref{def:interprets}), so each transition $a \mapsto f(a)$ of the functional graph lifts to an inclusion of balls.

Then $\mathbb{Z}_p = \bigsqcup_{a \in \mathcal{C}} \BZP(a)$: $p^N$ balls of radius $1/p^N$, one per configuration. The expansion~\eqref{eq:padic_encoding} encodes hierarchy: coordinates at lower powers of $p$ have greater influence on $p$-adic distance. The coordinate at the $p^0$-place is therefore the coarsest one in the ultrametric hierarchy, since it alone decides the partition of $\mathbb{Z}_p$ modulo $p$, even though it is the least significant digit in ordinary base-$p$ notation. The multiscale geometry depends on the chosen ordering of the coordinates. The role of this ordering and the variational principle over it are developed in Sections~\ref{sec:hierarchical_framework}--\ref{sec:stability_measure}.
\subsection{Rational Maps on Balls}

The dynamics over $\mathbb{C}_p$ is carried by rational functions, which act on balls. Let $\phi\in\C_p(z)$ be a rational function.
\begin{proposition}[Proposition 3.25 in \cite{benedetto2019dynamics}]
\label{prop:balls_to_balls}
Let $\BCP$ be a ball in $\C_p$ and $\phi \in \C_p(z)$ nonconstant. Then $\phi(\BCP)$ is either $\mathbb{P}^1(\C_p)$ or a ball in $\mathbb{P}^1(\mathbb{C}_p)$ of the same type as $\BCP$, and in the latter case every point of $\phi(\BCP)$ has the same number of preimages in $\BCP$, counting multiplicity.
\end{proposition}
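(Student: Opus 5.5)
The statement just given is the classical mapping property of rational functions on balls: the image of a ball under a nonconstant rational map is either all of $\mathbb{P}^1(\C_p)$ or a ball of the same type, with constant fiber cardinality. The plan reduces to the polynomial case by a Möbius normalization and then reads everything off Newton polygons, following the standard route in \cite{benedetto2019dynamics}.

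\emph{Step 1 (normalization).} Affine changes of coordinate $z\mapsto az+b$ and Möbius maps $w\mapsto 1/(w-c)$ carry balls to balls of the same type, closed to closed and open to open, and complements of balls in $\mathbb{P}^1$ likewise. We may therefore assume $\BCP=\BCP_1(0)$ or $\DCP_1(0)$, the rational case $r\in|\C_p^\times|$. The irrational case $r\notin p^{\mathbb{Q}}$ is treated the same way with strict inequalities, and there the open and closed balls coincide. Write $\phi=P/Q$ with $P,Q$ coprime.

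\emph{Step 2 (the pole-free case).} If $\phi$ has no pole in $\BCP$, then on a closed ball $\phi$ is given by a convergent power series $\sum c_k z^k$ with $|c_k|\to 0$. Take $w_0=\phi(0)$, and let $R=\max_{k\ge1}|c_k|$ and $d$ the largest index attaining it. For any $w$ with $|w-w_0|\le R$, the function $\phi(z)-w$ has Weierstrass degree exactly $d$ on $\BCP_1(0)$. It therefore has exactly $d$ zeros counted with multiplicity, by the Weierstrass preparation theorem or equivalently the Newton polygon. For $|w-w_0|>R$ the constant term dominates and there are no zeros. Hence $\phi(\BCP)=\BCP_R(w_0)$, and every point has $d$ preimages. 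The open ball is the increasing union of closed balls of radius $s<1$. Each of these has image a closed ball whose radius $R(s)$ increases to $\sup_s R(s)$, which is not attained, so the image is an open ball. The fiber count is constant because the Weierstrass degrees stabilize for $s$ near $1$.

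\emph{Step 3 (poles present).} If $\phi$ has a pole in $\BCP$, the aim is to show that the image either omits a ball or is everything. Pick a value $w_1\notin\phi(\BCP)$ if one exists. Composing with $w\mapsto 1/(w-w_1)$ gives a map with no pole in $\BCP$, so Step 2 applies, and pulling the resulting image ball back by the Möbius map gives a ball of $\mathbb{P}^1$ of the same type. If no such $w_1$ exists, then $\phi(\BCP)=\mathbb{P}^1(\C_p)$.

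\emph{Main obstacle.} The delicate point is constancy of the fiber count, counted with multiplicity, in the open case and after the Möbius pullback. It requires checking that the Weierstrass degree of $\phi(z)-w$ does not jump as $w$ moves across the image ball. I would handle it through the Newton polygon, which depends only on $|w-w_0|$ relative to the breakpoint $R$. Since the statement is cited verbatim from \cite[Prop.~3.25]{benedetto2019dynamics}, the paper may simply invoke that proof.
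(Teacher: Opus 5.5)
Your outline is the standard proof of \cite[Proposition~3.25]{benedetto2019dynamics}, which the paper cites without reproducing. You send an omitted value to $\infty$ by a M\"obius map in the target, which makes the map pole-free on $\BCP$, and then read the image ball and the constant fiber count off the Weierstrass degree.

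The one step you assert without argument is that the Weierstrass degrees stabilize on the open disk. Your Newton-polygon remark settles the closed case but not this one, and this is exactly where rationality is used. It closes in a line. The number $d(s)$ of zeros of $\phi-\phi(0)$ in $\BCP_s(0)$ is nondecreasing in $s$ and at most $\deg\phi$. Hence $d(s)=d^*$ for all $s$ near $1$. This gives the constant count $d^*$ on $\DCP_1(0)$, and it gives the radius $\sup_s R(s)=|c_{d^*}|_p\in|\C_p^\times|$, so the image is a rational open ball and not an irrational one. For a general power series on an open disk the degree can be infinite, as Remark~\ref{rem:wdeg_finite} notes.
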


Thus a nonconstant rational function maps a ball onto a ball, unless it maps that ball onto all of $\mathbb{P}^1(\mathbb{C}_p)$. This underlies the interpretation of discrete dynamics in Section~\ref{sec:hierarchical_framework}.

We write $\lambda=\phi'(x)$ for the multiplier at a fixed point and call $x$ attracting, repelling, or indifferent according to $|\lambda|_p<1$, $>1$, or $=1$.

\begin{proposition}[Cases (a) and (c) of Theorem 4.18 in \cite{benedetto2019dynamics}]
\label{T418}
Let $U = \DCP_r(a)$ and let $\phi$ be given on $U$ by a convergent power series in $z-a$, as is the case for a rational function with no poles on $U$, and assume $d:=\operatorname{wdeg}_U(\phi-a)<\infty$.
\begin{itemize}
  \item[(a)] If $\phi(U) \subsetneq U$, there is a unique fixed point $b\in U$, it is attracting, and $\phi^{n}(x)\to b$ for every $x\in U$.
  \item[(c)] If $\phi(U) \supsetneq U$ and $d=1$, there is a unique fixed point $b\in U$, and it is repelling.
\end{itemize}
\end{proposition}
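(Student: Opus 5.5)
The plan is to prove both cases directly from the power series of $\phi$ on $U$. Write $\phi(z)=\sum_{j\ge0}c_j(z-a)^j$ on $U=\DCP_r(a)$. Two standard facts about convergent series on an open disk will be used throughout.

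\begin{itemize}
  \item[(i)] The image $\phi(U)$ is the open disk about $c_0=\phi(a)$ whose radius is $s:=\sup_{j\ge1}|c_j|_p r^j$. This is the open-disk case of Proposition~\ref{prop:balls_to_balls}, computed with the maximum principle on closed subdisks $\BCP_\rho(a)$ and letting $\rho\uparrow r$.
  \item[(ii)] Weierstrass preparation: the number of zeros in $\BCP_\rho(a)$ of a convergent series equals the largest index $j$ at which $|c_j|_p\rho^j$ attains its maximum.
\end{itemize}

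In both cases, fixed points of $\phi$ in $U$ are zeros of $g(z):=\phi(z)-z$, whose coefficients are those of $\phi$ except that $c_0$ is replaced by $c_0-a$ and $c_1$ by $c_1-1$. Counting zeros of $g$ by Newton polygons gives existence and uniqueness of $b$. Recentring the series at $b$, which does not change $U$, then reads off $|\phi'(b)|_p$.

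For case (a), the hypothesis $\phi(U)\subsetneq U$ gives $s<r$ and $|c_0-a|_p<r$, hence $|c_j|_p r^{j-1}<1$ for all $j\ge1$. In particular $|c_1-1|_p=1$. On $\BCP_\rho(a)$ with $\rho$ close to $r$, the term $|c_1-1|_p\rho=\rho$ strictly dominates every $|c_j|_p\rho^j$ with $j\ge2$, and also dominates $|c_0-a|_p$. Hence the Weierstrass degree of $g$ is $1$ there, and $g$ has exactly one zero $b\in U$.

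Recentred at $b$, the series becomes $\phi(z)=b+\sum_{j\ge1}c'_j(z-b)^j$, again with $\sup_j|c'_j|_p r^j=s$ by (i). This gives $|\phi'(b)|_p=|c'_1|_p\le s/r<1$, so $b$ is attracting. For convergence, on each closed disk $\BCP_\rho(b)$ with $\rho<r$ one has
\[
|\phi(x)-b|_p\le\max_{j\ge1}|c'_j|_p\rho^j\le\rho\,\frac{s}{r},
\]
using $|c'_j|_p\rho^{j}\le |c'_j|_p r^{j}(\rho/r)\le s\rho/r$. So $\phi$ maps $\BCP_\rho(b)$ into $\BCP_{\rho s/r}(b)$, iterates shrink geometrically, and $\phi^n(x)\to b$ for every $x\in U$.

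For case (c), the hypothesis $d=1$ means that $|c_1|_p\rho\ge|c_j|_p\rho^j$ for all $j\ge2$ and all $\rho<r$. Consequently $\phi$ is injective on $U$ and $s=|c_1|_p r$. Since $\phi(U)\supsetneq U$, we get $|c_1|_p>1$, so $|c_1-1|_p=|c_1|_p$. Since $a\in\phi(U)$, we get $|c_0-a|_p<|c_1|_p r$.

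For $\rho$ close to $r$ the linear term of $g$ then dominates both the constant term and all higher terms, with the higher terms dominated weakly. The maximizing index of the Newton polygon is therefore exactly $1$, and $g$ has a unique zero $b\in U$. Recentring at $b$ preserves Weierstrass degree $1$ and the image radius $|c_1|_p r$, so $|\phi'(b)|_p\,r=|c_1|_p r$. This gives $|\phi'(b)|_p>1$, and $b$ is repelling.

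The step I expect to be the main obstacle is the bookkeeping forced by working on an open disk. Suprema such as $s$ need not be attained, and the Weierstrass degree on $U$ is only a limit of degrees on closed subdisks. One must check that the strict inequalities survive for all $\rho$ sufficiently close to $r$, so that no zero of $g$ escapes toward the boundary sphere $|z-a|_p=r$. I would handle this by fixing $\rho_0<r$ with $b$-candidates and $|c_0-a|_p$ inside $\BCP_{\rho_0}(a)$, and then arguing monotonically in $\rho\in(\rho_0,r)$.
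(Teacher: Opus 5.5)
The paper gives no proof of this statement. It imports it from \cite[Theorem~4.18]{benedetto2019dynamics} and builds on it in Lemma~\ref{lem:closed_ball_bridge}. Your argument is therefore a genuinely different, self-contained route: you count the zeros of $g(z)=\phi(z)-z$ by Weierstrass preparation on closed subdisks $\BCP_\rho(a)$ with $\rho\uparrow r$. Case (a) is correct as written. There, $|c_0-a|_p<r$, $|c_1-1|_p=1$ and $|c_j|_p\rho^j\le\rho\,s/r<\rho$ for $j\ge2$ make index $1$ the unique maximizer for every $\rho\in(|c_0-a|_p,r)$. The recentred bound $|\phi(x)-b|_p\le(s/r)|x-b|_p$ then gives the convergence.

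Your route buys two things the citation hides. It locates the fixed point: $|b-a|_p\le|c_0-a|_p$ in (a) and $|b-a|_p\le|c_0-a|_p/|c_1|_p$ in (c). It also never uses $d<\infty$ in case (a). Remark~\ref{rem:wdeg_finite} shows that hypothesis is not vacuous in general, so your proof shows it is superfluous there.

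Two steps in (c) need tightening before the argument is complete.

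First, $d=1$ on the open disk gives more than weak domination. It gives $c_1\neq0$, $|c_0-a|_p<|c_1|_pr$ and $|c_j|_pr^j\le|c_1|_pr$ for $j\ge2$. Hence $|c_j|_p\rho^j\le|c_1|_pr(\rho/r)^j<|c_1|_p\rho$ for every $\rho<r$. You need this strict inequality. If the higher terms were dominated only weakly, a tie at some $j\ge2$ would make the largest maximizing index, and hence the zero count of $g$ in $\BCP_\rho(a)$, at least $2$. So ``the maximizing index is therefore exactly $1$'' does not follow from what you wrote.

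Second, ``recentring at $b$ preserves Weierstrass degree $1$'' is asserted rather than proved. You can bypass recentring altogether. Write $\phi'(z)=c_1+\sum_{j\ge2}jc_j(z-a)^{j-1}$ and note $|jc_j(z-a)^{j-1}|_p\le|c_j|_p\rho^{j-1}<|c_1|_p$ on $\BCP_\rho(a)$. This gives $|\phi'(b)|_p=|c_1|_p>1$ directly. The same strict dominance gives $|\phi(x)-\phi(y)|_p=|c_1|_p|x-y|_p$, which is the injectivity you invoke without proof. It is exactly the linear-dominance estimate the paper uses in the proof of Lemma~\ref{lem:closed_ball_bridge}(b).
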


\noindent The lettering follows the source. The cited theorem also treats a second attracting case, in which $\phi$ maps $U$ onto itself with Weierstrass degree at least $2$, and an isometric case. Neither is restated: the first does not arise in Section~\ref{sec:hierarchical_framework}, and the second is reached directly from the scaling identity of Lemma~\ref{lem:closed_ball_bridge}.

\begin{remark}[The Weierstrass degree is finite wherever we invoke Proposition~\ref{T418}]
\label{rem:wdeg_finite}
The Weierstrass degree of $f=\sum_{k\ge0}c_k(z-a)^k$ on a disk of radius $r$ about $a$ selects an index attaining $\max_k|c_k|_p r^k$: the largest such index on a rational closed disk, and on an open or irrational disk the smallest, or $\infty$ when no index attains it \cite[Definition~3.8]{benedetto2019dynamics}. On a rational closed disk the maximum is always attained, so finiteness is at issue only in the second convention, and there it is exactly the requirement that the maximum be attained. The hypothesis is therefore not vacuous, and a power series with radius of convergence exactly $r$ can fail it. It costs nothing under the standing hypothesis of Lemma~\ref{lem:closed_ball_bridge}. If $F\in\C_p(z)$ has no pole on the closed ball $\BCP_r(a)$, then it has none on $\BCP_r(b)=\BCP_r(a)$ for any $b\in\BCP_r(a)$, the poles lying at distance greater than $r$ from $b$ as well, so the expansion of $F$ at $b$ converges on that ball and its coefficients satisfy $|c_k(b)|_p r^k\to0$. Every disk contained in $\BCP_r(a)$ has some $b\in\BCP_r(a)$ as a center and a radius $s\le r$, so $|c_k(b)|_p s^k\to0$ and the maximum defining the Weierstrass degree, taken at the center of that disk, is attained. The sub-disk need not be concentric with $a$: when $|a-a'|_p=r$ the disks $\DCP_r(a')$ and $\DCP_r(a)$ are disjoint, and it is $\DCP_r(a')$ that the proof of Lemma~\ref{lem:closed_ball_bridge}(b) uses. The same bound makes the maximum attained on the closed ball itself. The approximants built in Section~\ref{sec:hierarchical_framework} satisfy the hypothesis: the construction of Theorem~\ref{thm:existence_approximations} keeps the poles of $F_n$ off the closed ball $\BCP_{1/p^n}(m)$, those of the local piece glued there sitting on a sphere of radius $p^{\,1/2-n}>p^{-n}$ when that piece has poles at all, and a pole of $F_n$ in the ball being excluded by the image equality of Theorem~\ref{T1}. The degree is therefore finite at every level and for every ball.
\end{remark}

\noindent In part~(a) the strict inclusion is what plays the role of a contraction, and no metric hypothesis on $\phi$ itself is needed. The ball-level analogue of this classification is given in Definition~\ref{def:quasi_dynamics} (Section~\ref{subsec:quasi_dynamics}).

\noindent A subtle point: Proposition~\ref{T418} is stated for open disks $\DCP_r(a) \subset \mathbb{C}_p$, while the constructions of Section~\ref{sec:hierarchical_framework} live on closed balls $\BCP_r(a)$. The two sets coincide for irrational radii $r \notin p^{\mathbb{Q}}$, so the proposition transfers directly there, but the radii $r=p^{-n}$ that arise in our setting lie in $p^{\mathbb{Q}}$. The following lemma is the bridge. It is stated for an arbitrary rational function, since nothing in it depends on the construction that will produce the approximants. Both parts place the image ball in the chain of $\BCP_r(a)$, which is exactly what the same-ball-chain hypothesis of Remark~\ref{rem:fixed_points} supplies; in part~(b) that costs no generality, because $\BCP_r(a)\subseteq\BCP_t(b)$ already forces $\BCP_t(b)=\BCP_t(a)$. Part~(a) costs no generality either, though for a different reason. If $F(\BCP_r(a))=\BCP_t(\beta)\subsetneq\BCP_r(a)$, then $\beta\in\BCP_r(a)$, so $\BCP_r(\beta)=\BCP_r(a)$ as sets, and the hypothesis of~(a) holds at the center $\beta$ with $r'=t<r$. The change of center is not idle. When $|\beta-a|_p=r$, as for $\BCP_{1/4}(3)\subsetneq\BCP_{1/2}(1)$ with $p=2$, no ball concentric with $a$ of radius smaller than $r$ contains the image.

\begin{lemma}[Closed-ball bridge]
\label{lem:closed_ball_bridge}
Let $F\in\C_p(z)$ have no pole on the closed ball $\BCP_r(a)$, with $r\in p^{\mathbb{Q}}$.
\begin{itemize}
  \item[(a)] If $F(\BCP_r(a))\subseteq\BCP_{r'}(a)$ for some $r'<r$, then $F$ has a unique fixed point $b$ in $\BCP_r(a)$, it is attracting, and $F^k(x)\to b$ for every $x\in\BCP_r(a)$.
  \item[(b)] If $F$ carries $\BCP_r(a)$ bijectively onto $\BCP_t(a)$ for some $t>0$, then $F$ scales every distance in the ball by the same factor,
  \begin{equation}
  \label{eq:scaled_isometry}
  |F(x)-F(y)|_p=\frac{t}{r}\,|x-y|_p\qquad\text{for all }x,y\in\BCP_r(a).
  \end{equation}
  In particular $|F'(z)|_p=t/r$ throughout the ball, so the multiplier at any fixed point is $t/r$. If $t>r$, then $F$ has a unique fixed point in $\BCP_r(a)$ and it is repelling. If $t=r$, then $F$ is an isometry of $\BCP_r(a)$ and any fixed point it has is indifferent, but existence is not forced. If $t<r$, then $\BCP_t(a)\subsetneq\BCP_r(a)$ and part~(a) applies, so the unique fixed point is attracting with multiplier of absolute value $t/r<1$.
\end{itemize}
\end{lemma}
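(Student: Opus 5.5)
Part~(a) reduces to the open-disk statement of Proposition~\ref{T418}(a). Let $\rho:=\max(r',\,r/p^{1/2})$, so that $r'\le\rho<r$ and $\rho\notin p^{\mathbb{Q}}$ whenever $r'\le r/p^{1/2}$. In the remaining case take instead an irrational $\rho$ strictly between $r'$ and $r$. Then $\DCP_\rho(a)=\BCP_\rho(a)$ and $\BCP_{r'}(a)\subsetneq\DCP_\rho(a)$.

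I would then apply Proposition~\ref{T418}(a) on the open disk $U:=\DCP_r(a)$, which contains $\BCP_\rho(a)$. We have $F(U)\subseteq F(\BCP_r(a))\subseteq\BCP_{r'}(a)\subsetneq U$. Remark~\ref{rem:wdeg_finite} guarantees the Weierstrass degree is finite, since $F$ has no pole on the closed ball. This yields a unique attracting fixed point $b\in U$ attracting all of $U$.

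It remains to handle points of $\BCP_r(a)\setminus U$. For such $x$, $F(x)\in\BCP_{r'}(a)\subseteq U$, so $F^k(x)\to b$ as well. Any fixed point in $\BCP_r(a)$ lies in $F(\BCP_r(a))\subseteq U$, so uniqueness holds on the closed ball too.

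For part~(b) I would work with power series. Since $F$ has no poles on $\BCP_r(a)$, write $F(z)=\sum c_k(z-a)^k$, convergent on the ball with $|c_k|r^k\to0$. By Proposition~\ref{prop:balls_to_balls} together with bijectivity, every point of $\BCP_t(a)$ has exactly one preimage counted with multiplicity. Hence the Weierstrass degree of $F-w$ on the closed ball equals $1$ for each $w$ in the image. This gives $|c_1|r=\max_k|c_k|r^k=t$, and $|c_k|r^k<t$ for $k\ge2$; the image radius equals $\max_{k\ge1}|c_k|r^k$ by the standard image-of-a-disk formula.

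Now expand
\[
F(x)-F(y)=(x-y)\sum_{k\ge1}c_k\sum_{i+j=k-1}(x-a)^i(y-a)^j .
\]
The $k=1$ term has absolute value $|c_1|=t/r$. Each $k\ge2$ term is bounded by $|c_k|r^{k-1}<t/r$. The ultrametric inequality then gives \eqref{eq:scaled_isometry}, and letting $y\to x$ gives $|F'(z)|_p=t/r$.

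The trichotomy follows from \eqref{eq:scaled_isometry}:
\begin{itemize}
\item If $t<r$, then $\BCP_t(a)\subsetneq\BCP_r(a)$ and part~(a) applies with $r'=t$.
\item If $t=r$, then $F$ is an isometry and any fixed point has multiplier of absolute value $1$. A translation $z\mapsto z+a'$ with $|a'|=r$ shows that existence is not forced.
\item If $t>r$, consider $G=F^{-1}\colon\BCP_t(a)\to\BCP_r(a)$. It is a bijection scaling distances by $r/t<1$, hence a strict contraction of the complete ball $\BCP_r(a)$ into itself. By Banach it has a unique fixed point, which is the unique fixed point of $F$ in $\BCP_r(a)$, and it has multiplier $t/r>1$, so it is repelling. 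This avoids Proposition~\ref{T418}(c) and its open-disk issue.
\end{itemize}

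The main obstacle I expect is the justification that bijectivity forces $|c_k|r^k<|c_1|r$ for all $k\ge2$. This is a strict inequality on a rational closed disk, where the Weierstrass degree is the \emph{largest} maximizing index. So I would argue via the preimage count: a maximizing index $d\ge2$ would give $d$ preimages with multiplicity, contradicting bijectivity.
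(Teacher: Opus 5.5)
\textbf{The gap: bijectivity does not by itself give Weierstrass degree $1$.} The step you single out as the main obstacle is not closed by the reason you give. Proposition~\ref{prop:balls_to_balls} counts preimages with multiplicity, while bijectivity counts them as a set. A maximizing index $d\ge2$ giving $d$ preimages with multiplicity does not contradict bijectivity if every point of $\BCP_t(a)$ has a single preimage of order $d$. Nothing in your argument excludes this. Without further input it really occurs: over a complete algebraically closed field of characteristic $p$, the map $z\mapsto z^p$ carries $\BCP_1(0)$ bijectively onto itself, every preimage has multiplicity $p$, and \eqref{eq:scaled_isometry} fails.

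The missing ingredient is characteristic zero. If $d\ge2$, every $x\in\BCP_r(a)$ is the only preimage of $F(x)$, hence a root of order $d$ of $F-F(x)$, so $F'$ vanishes on the whole ball. A nonzero rational function has finitely many zeros, so $F'\equiv0$, and then $F$ is constant in characteristic zero, which contradicts bijectivity onto a ball of positive radius. This is exactly what the paper inserts at this point. Once it is in, your derivation of $|c_1|_pr=t$, of $|c_k|_pr^k<t$ for $k\ge2$, and of \eqref{eq:scaled_isometry} from the expansion of $F(x)-F(y)$ matches the paper's.

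\textbf{Part (a): delete the paragraph defining $\rho$.} Since $r\in p^{\mathbb{Q}}$, also $r/p^{1/2}\in p^{\mathbb{Q}}$, so the claim $\rho\notin p^{\mathbb{Q}}$ is false. The asserted equality $\DCP_\rho(a)=\BCP_\rho(a)$ fails as well. When $r'=r/p^{1/2}$ the inclusion $\BCP_{r'}(a)\subsetneq\DCP_\rho(a)$ fails too. You never use $\rho$ afterwards, so nothing downstream breaks. The strict inclusion $\BCP_{r'}(a)\subsetneq\DCP_r(a)$ that you actually need follows from the density of $p^{\mathbb{Q}}$, which supplies a point at distance strictly between $r'$ and $r$ from $a$. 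Working on $\DCP_r(a)$ itself, rather than on the paper's $\DCP_s(a)$ with $r'<s<r$, is legitimate, since $|c_k|_pr^k\to0$ keeps the Weierstrass degree finite there.

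\textbf{The case $t>r$: a different and correct route.} The paper descends to the open disk $\DCP_r(a')$ about a preimage $a'$ of $a$ and shows $F(\DCP_r(a'))=\DCP_t(a)\supsetneq\DCP_r(a')$. It then checks that the Weierstrass degrees of $F-a$ and $F-a'$ agree on that disk, invokes Proposition~\ref{T418}(c), and finally uses \eqref{eq:scaled_isometry} to extend uniqueness to the closed ball.

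You instead restrict $F^{-1}$ to $\BCP_r(a)\subseteq\BCP_t(a)$. By \eqref{eq:scaled_isometry} this is a self-map of the complete set $\BCP_r(a)$ contracting distances by the factor $r/t<1$, and its fixed points there are exactly those of $F$. Your Banach argument uses nothing beyond the scaling identity and completeness. It gives existence and uniqueness on the closed ball in one stroke and avoids the recentering and the open-versus-closed bookkeeping. The paper's route buys only a visible transfer of the cited classification theorem to closed balls.

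Your $t=r$ example, a translation by an element of absolute value $r$, works as well as the paper's $z\mapsto z+1$.
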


\begin{proof}
Throughout, $F$ is given on $\BCP_r(a)$ by a convergent power series $F(z)=\sum_{k\ge0}c_k(z-a)^k$. Write $F=g/h$ with $g,h\in\C_p[z]$ coprime, so that the absence of poles on the ball says exactly that $h$ has no zero there. Then $h$ is a unit in the ring of power series converging on $\BCP_r(a)$, and $F=g\,h^{-1}$ lies in that ring \cite[Theorem~3.5 and Proposition~3.11]{benedetto2019dynamics}. Its Weierstrass degree is finite on that ball and on every disk inside it (Remark~\ref{rem:wdeg_finite}). We may assume $F$ nonconstant. A constant map is ruled out in~(b), which asks for a bijection onto a ball of positive radius, and in~(a) it makes its own value the unique fixed point, attracting because the multiplier is $0$. Being bounded, the image of $F$ is a ball and not $\mathbb{P}^1(\C_p)$, so Proposition~\ref{prop:balls_to_balls} applies in its first alternative.

(a) Density of $p^{\mathbb{Q}}$ in $\mathbb{R}_{>0}$ supplies $s,\rho\in p^{\mathbb{Q}}$ with $r'<\rho<s<r$. Then $\DCP_s(a)\subset\BCP_r(a)$ and $F(\DCP_s(a))\subseteq\BCP_{r'}(a)\subseteq\DCP_s(a)$. The second of these inclusions is strict, because $\rho$ lies in the value group $|\C_p^\times|=p^{\mathbb{Q}}$, so some $x$ has $|x-a|_p=\rho$, and that $x$ lies in $\DCP_s(a)$ and outside $\BCP_{r'}(a)$. Hence $F(\DCP_s(a))\subsetneq\DCP_s(a)$, which is the hypothesis of Proposition~\ref{T418}(a), and that proposition gives a unique attracting fixed point in $\DCP_s(a)$. The intermediate radius $\rho$ is where the density of the value group enters. Over a field whose value group is discrete, such as $\Qp$, no such $\rho$ need exist and the two balls can agree. Any fixed point of $F$ in $\BCP_r(a)$ lies in the image, hence in $\BCP_{r'}(a)\subset\DCP_s(a)$, so it is that one. Proposition~\ref{T418}(a) also gives $F^k(y)\to b$ for every $y\in\DCP_s(a)$, and $F(\BCP_r(a))\subseteq\BCP_{r'}(a)\subset\DCP_s(a)$ puts every point of the closed ball inside that disk after a single iteration, so the convergence holds on all of $\BCP_r(a)$.

(b) Convergence on the closed ball gives $|c_k|_p r^k\to0$, so the maximum defining the Weierstrass degree is attained and $d:=\operatorname{wdeg}_{\BCP_r(a)}(F-c_0)$ is finite, and, $F$ being bijective and hence nonconstant, \cite[Theorem~3.15]{benedetto2019dynamics} says that $F$ is everywhere $d$-to-$1$ onto its image, a ball centered at $c_0=F(a)$ of radius $|c_d|_p r^d$. Bijectivity forces $d=1$: were $d\ge2$, the single preimage of each point of the image would carry the whole multiplicity, so $F-F(x)$ would vanish to order $d$ at every $x\in\BCP_r(a)$, hence $F'$ would vanish on that ball and therefore identically, a nonzero rational function having only finitely many zeros, and $F$ would be constant in characteristic zero, which a bijection onto a ball of positive radius is not. The image is therefore the closed ball of radius $|c_1|_p r$ about $F(a)$, and by hypothesis it is $\BCP_t(a)$. Equal balls need not have equal radii over an arbitrary non-Archimedean field, $\BCP_{1/2}(0)$ and $\BCP_{1/3}(0)$ being the same subset of $\Qp$ for $p=3$, but in $\C_p$ the density of $p^{\mathbb{Q}}$ makes the radius of a closed ball equal to its diameter and hence an invariant of the set. So $|c_1|_p r=t$ and $|c_1|_p=t/r$. Since $1$ is the largest index attaining the maximum, $|c_k|_p r^k<|c_1|_p r$ for every $k\ge2$. Hence for $x,y\in\BCP_r(a)$ every term of $F(x)-F(y)$ beyond the linear one is bounded by $|c_k|_p r^{k-1}|x-y|_p<|c_1|_p|x-y|_p$, and the strong triangle inequality leaves the linear term in charge, which is~\eqref{eq:scaled_isometry}. The same bound applied to $F'(z)=c_1+\sum_{k\ge2}kc_k(z-a)^{k-1}$, together with $|k|_p\le1$, gives $|F'|_p=|c_1|_p=t/r$ throughout the ball.

Suppose $t>r$. Since $a\in\BCP_r(a)\subset\BCP_t(a)=F(\BCP_r(a))$, there is $a'\in\BCP_r(a)$ with $F(a')=a$, and $\BCP_r(a')=\BCP_r(a)$ because every point of a non-Archimedean ball is a center. We claim
\begin{equation}
\label{eq:image_open_disk}
F\bigl(\DCP_r(a')\bigr)=\DCP_t(a)\supsetneq\BCP_r(a)\supsetneq\DCP_r(a'),
\end{equation}
where the second inclusion holds because $t>r$ together with the density of $p^{\mathbb{Q}}$, which puts a point at distance from $a$ strictly between $r$ and $t$, and the third because $\DCP_r(a')\subset\BCP_r(a')=\BCP_r(a)$, strictly because $r\in p^{\mathbb{Q}}$ provides a point at distance exactly $r$ from $a'$. The inequality $t>r$ on its own would not separate the two balls over a field with discrete value group, as $\DCP_1(0)=\BCP_{1/p}(0)$ in $\Qp$ shows. Only $\supseteq$ is needed in the second relation in any case, the strict containment that Proposition~\ref{T418}(c) will use coming from the third. For the equality, if $x\in\DCP_r(a')$ then $|F(x)-a|_p=|F(x)-F(a')|_p=(t/r)|x-a'|_p<t$ by~\eqref{eq:scaled_isometry}. Conversely, given $y\in\DCP_t(a)\subset\BCP_t(a)$, bijectivity supplies a unique $x\in\BCP_r(a')$ with $F(x)=y$, and~\eqref{eq:scaled_isometry} read backwards gives $|x-a'|_p=(r/t)|y-a|_p<r$, so $x\in\DCP_r(a')$.

Injectivity supplies the degree. By~\eqref{eq:image_open_disk} the restriction $F\colon\DCP_r(a')\to\DCP_t(a)$ is a bijection, so \cite[Theorem~3.15]{benedetto2019dynamics} applied on that open disk forces $\operatorname{wdeg}_{\DCP_r(a')}(F-a)=1$ by the multiplicity argument just given, the subtraction being of $F(a')=a$. Proposition~\ref{T418}(c) is stated instead for $\operatorname{wdeg}_U(\phi-a')$, subtracting the center of $U$. The two agree here, because the series differ by the constant $a-a'$, of absolute value at most $r<t$, while the maximum $t$ is attained by the linear term, so the constant cannot change which index attains it.

By~\eqref{eq:image_open_disk} the disk $\DCP_r(a')$ is strictly contained in its image, so Proposition~\ref{T418}(c) gives a unique repelling fixed point $b\in\DCP_r(a')$. Uniqueness extends to the whole closed ball by~\eqref{eq:scaled_isometry}: two distinct fixed points $x_0,x_1\in\BCP_r(a)$ would satisfy $|x_0-x_1|_p=(t/r)|x_0-x_1|_p$ with $t/r>1$. Finally $|F'(b)|_p=t/r>1$ by the first paragraph, so $b$ is repelling with the stated multiplier.

Suppose $t<r$. Then $F(\BCP_r(a))=\BCP_t(a)$, which is the hypothesis of part~(a) with $r'=t$, so there is a unique fixed point $b\in\BCP_r(a)$, it is attracting, and $|F'(b)|_p=t/r<1$ by the first paragraph. Suppose $t=r$. Then~\eqref{eq:scaled_isometry} says that $F$ is an isometry of $\BCP_r(a)$ and $|F'|_p=1$, so a fixed point, if there is one, is indifferent. Existence is not forced: the translation $z\mapsto z+1$ carries the closed unit ball $\BCP_1(0)$ bijectively and isometrically onto itself, since $\BCP_1(1)=\BCP_1(0)$, and has no fixed point anywhere in $\mathbb{C}_p$.
\end{proof}

\noindent Part~(b) transfers the expanding case of \cite[Theorem~4.18]{benedetto2019dynamics} to the closed balls on which Section~\ref{sec:hierarchical_framework} works, and strengthens it there: the descent to an open disk centered at a preimage of $a$ is what makes the cited theorem applicable, and the scaling identity is what returns the conclusion to the closed ball, where the fixed point is unique and the multiplier is exactly $t/r$. The chain~\eqref{eq:image_open_disk} runs through $\BCP_r(a)$, which is what makes the cited theorem apply without recentering the target disk. Compare \cite[Lemme~4.13]{riveraletelier2003dynamique}, which reaches the existence half for an arbitrary end.

\subsection{The Gluing Tool}

The construction of Section~\ref{sec:hierarchical_framework} builds a global rational map from prescribed local dynamics on disjoint balls. The tool is the following gluing theorem.
\begin{theorem}[$\varepsilon$-approximation; Theorem 4.2 in \cite{rogelio2023gluing}]
\label{T1}
Let $g_1, \ldots, g_n \in \C_p(z)$ and let $B := \bigcup_{i=1}^n \BCP_{r_i}(a_i)$, where
\begin{itemize}
    \item[(H1)] the $\BCP_{r_i}(a_i)$ are pairwise disjoint closed balls whose radii lie in the value group, $r_i\in|\C_p^\times|=p^{\mathbb{Q}}$ (such balls are called \emph{rational}, a property of the radius and not of the map);
    \item[(H2)] $g_i(\BCP_{r_i}(a_i)) = \BCP_{t_i}(b_i)$ for each $i$;
    \item[(H3)] $g_i(B) \subset \BCP_1(0)$ for all $i = 1, \ldots, n$.
\end{itemize}
Then for any $\varepsilon > 0$ there exists $F_\varepsilon \in \C_p(z)$ such that:
\begin{enumerate}
    \item\label{T1:image} $F_\varepsilon(\BCP_{r_i}(a_i)) = \BCP_{t_i}(b_i)$ for each $i$ (the image of each input ball equals the prescribed target ball in $\mathbb{C}_p$),
    \item $|F_\varepsilon(z) - g_i(z)|_p < \varepsilon$ for all $z \in \BCP_{r_i}(a_i)$.
\end{enumerate}
\end{theorem}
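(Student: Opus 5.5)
The proof I have in mind builds $F_\varepsilon$ as a rational partition of unity weighted by the $g_i$, and then checks that a small enough perturbation of $g_i$ on $\BCP_{r_i}(a_i)$ still has image exactly $\BCP_{t_i}(b_i)$. Concretely, I would set
\[
F_\varepsilon=\sum_{i=1}^n g_i\,\chi_i ,\qquad \chi_i(z)=\frac{1}{1+u_i(z)},\qquad u_i(z)=\Bigl(\frac{z-a_i}{c_i}\Bigr)^{M}.
\]
Here $c_i\in\C_p$ has $|c_i|_p=\rho_i$, where $\rho_i\in p^{\mathbb{Q}}$ satisfies $r_i<\rho_i<d_i$, and $d_i$ is the distance from $a_i$ to the union of the other balls. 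Since the balls are pairwise disjoint closed balls, $d_i>r_i$, and density of the value group supplies $\rho_i$.

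The first step is the behaviour of $\chi_i$. On $\BCP_{r_i}(a_i)$ one has $|u_i|_p\le(r_i/\rho_i)^M<1$, so $|1+u_i|_p=1$ and
\[
|\chi_i-1|_p=|u_i|_p\le(r_i/\rho_i)^M .
\]
On $\BCP_{r_j}(a_j)$ with $j\ne i$, the ultrametric fact recalled at the start of the section gives $|z-a_i|_p\ge d_i$. Hence $|u_i|_p\ge(d_i/\rho_i)^M>1$ and $|\chi_i|_p=|u_i|_p^{-1}\le(\rho_i/d_i)^M$. The poles of $\chi_i$ solve $u_i=-1$, so they lie on the sphere $|z-a_i|_p=\rho_i$, which misses $B$. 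Hypothesis (H3) keeps every $g_i$ bounded by $1$ on $B$, so no $g_i$ has a pole on $B$. Therefore $F_\varepsilon$ has no pole on $B$.

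The second step is the estimate on each ball. On $\BCP_{r_j}(a_j)$,
\[
F_\varepsilon-g_j=g_j(\chi_j-1)+\sum_{i\ne j}g_i\chi_i ,
\]
and every term is bounded by some $(r_j/\rho_j)^M$ or $(\rho_i/d_i)^M$, using $|g_i|_p\le1$ on $B$. Taking $M$ large makes this smaller than any prescribed $\delta>0$. I would choose $\delta<\min\bigl(\varepsilon,\min_j t_j\bigr)$, which gives conclusion 2 at once.

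The step I expect to be the main obstacle is the exact image equality in conclusion 1, since an approximation in sup norm does not obviously preserve images. I would argue through power series on the rational closed ball, as in the proof of Lemma~\ref{lem:closed_ball_bridge}. Expand $g_j=\sum c_k(z-a_j)^k$ and $h=F_\varepsilon-g_j=\sum e_k(z-a_j)^k$, both convergent on $\BCP_{r_j}(a_j)$. By \cite[Theorem~3.15]{benedetto2019dynamics}, the image $g_j(\BCP_{r_j}(a_j))=\BCP_{t_j}(b_j)$ has radius $\max_{k\ge1}|c_k|_p r_j^k=t_j$ and contains $c_0$. Because the radius is rational, the sup norm on the ball equals the Gauss norm, so $|e_k|_p r_j^k\le\|h\|<\delta<t_j$ for every $k$. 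Thus adding $h$ changes no coefficient term by as much as $t_j$. The maximum over $k\ge1$ therefore stays $t_j$, and it is attained at the same index. The new centre $c_0+e_0$ lies within $t_j$ of $c_0$, and hence inside $\BCP_{t_j}(b_j)$. Applying \cite[Theorem~3.15]{benedetto2019dynamics} again gives $F_\varepsilon(\BCP_{r_j}(a_j))=\BCP_{t_j}(c_0+e_0)=\BCP_{t_j}(b_j)$.

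Two details need care in this step. First, the Gauss-norm identification requires $r_j\in p^{\mathbb{Q}}$, which is exactly what hypothesis (H1) provides. Second, a constant $g_j$ would correspond to $t_j=0$, which is excluded because a ball has positive radius.
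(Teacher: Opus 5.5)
Your proposal is correct and follows essentially the paper's route: the same rational partition of unity $\sum_i g_i\chi_i$ with poles on an intermediate sphere that misses $B$, and the interpolation error pushed below both $\varepsilon$ and $\min_i t_i$. The paper writes the factor as $\bigl(1-((z-a_i)/c_i)^{\kappa_i}\bigr)^{-1}$ with even $\kappa_i$ and poles on the sphere of radius $\sqrt{\delta_i r_i}$, which differs from yours only cosmetically, and your Gauss-norm coefficient comparison for the exact image is precisely the proof of Lemma~\ref{lem:image_ball_perturbation}, the step the paper uses to upgrade inclusion to equality in conclusion~1.
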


\noindent One explicit construction is
\[
F_\varepsilon(z) \;=\; \sum_i g_i(z)\, h_i(z),
\qquad
h_i(z) \;=\; \Bigl(1 - \bigl((z-a_i)/c_i\bigr)^{\kappa_i}\Bigr)^{-1},
\]
for appropriate $c_i$ and even exponents $\kappa_i$. Details are in~\cite[Section~4]{rogelio2023gluing}. Conclusion~\ref{T1:image} holds for every $\varepsilon>0$, and one step of its proof deserves to be made explicit here, because it is where ultrametric geometry bites: exhibiting points of the image at every admissible distance from $b_i$ does not by itself fill the ball, since a proper subball may meet the outer sphere of a larger one. The equality follows from the construction as follows. The poles of each factor $h_j$ lie on the sphere $|z-a_j|_p=\sqrt{\delta_j r_j}$, which is disjoint from every input ball, and each $g_j$ is pole-free on $B$ by hypothesis (H3), so $F_\varepsilon$ has no pole on $B$; and the construction takes its interpolation error below $\min_i t_i$, so on each input ball $F_\varepsilon=g_i+h$ with $\sup|h|_p<t_i$. The following lemma then upgrades the inclusion to the equality, and it is also what a bound on $\varepsilon$ buys when bijectivity is wanted.

\begin{lemma}[Image-ball stability under small perturbation; {cf.\ \cite[Lemma~3.5]{rogelio2023gluing}}]
\label{lem:image_ball_perturbation}
Let $r\in|\C_p^\times|$ and let $g,h \in \C_p(z)$ have no poles on the closed ball $\BCP_r(a)$, with $g(\BCP_r(a)) = \BCP_t(b)$. If $q:=\sup_{z \in \BCP_r(a)} |h(z)|_p < t$, then $(g+h)(\BCP_r(a)) = \BCP_t(b)$. Moreover, if $g$ is bijective on $\BCP_r(a)$, then so is $g+h$.
\end{lemma}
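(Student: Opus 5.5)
The plan is to show first that $g+h$ maps $\BCP_r(a)$ into $\BCP_t(b)$, then that it maps onto $\BCP_t(b)$, and finally to deduce injectivity when $g$ is bijective.

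\emph{Inclusion.} Since $g$ and $h$ have no pole on $\BCP_r(a)$, both are bounded there and so is $g+h$. By Proposition~\ref{prop:balls_to_balls} the image $(g+h)(\BCP_r(a))$ is then a closed ball and not $\mathbb{P}^1(\C_p)$. For $z\in\BCP_r(a)$ we have $|g(z)-b|_p\le t$ and $|h(z)|_p\le q<t$. The strong triangle inequality gives $|(g+h)(z)-b|_p\le t$, so $(g+h)(\BCP_r(a))\subseteq\BCP_t(b)$.

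\emph{Surjectivity.} I would use the power-series description on the closed ball, with $r\in|\C_p^\times|$ so that the Weierstrass degree is computed on a rational closed disk. Expand $g=\sum c_k(z-a)^k$ and $h=\sum e_k(z-a)^k$, both converging on $\BCP_r(a)$. By \cite[Theorem~3.15]{benedetto2019dynamics}, the image of $g$ has radius $t=\max_{k\ge1}|c_k|_p r^k$, attained at the index $d=\operatorname{wdeg}(g-c_0)\ge1$. For $h$, the maximum modulus principle on a rational closed disk gives $\max_k|e_k|_p r^k=\sup|h|_p=q<t$. So every coefficient of $g+h$ of index $k\ge1$ has $|c_k+e_k|_p r^k\le t$, with equality exactly at the indices where $|c_k|_p r^k=t$. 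In particular the largest such index is still $d$. The same theorem then shows that $(g+h)(\BCP_r(a))$ is the closed ball about $c_0+e_0$ of radius $t$. Since $|e_0|_p\le q<t$, this ball equals $\BCP_t(b)$.

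\emph{Bijectivity.} If $g$ is bijective, the multiplicity argument in the proof of Lemma~\ref{lem:closed_ball_bridge}(b) forces $d=1$. By the previous step $d=1$ for $g+h$ as well, and \cite[Theorem~3.15]{benedetto2019dynamics} makes $g+h$ everywhere $1$-to-$1$ onto its image, hence bijective.

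\emph{Main obstacle.} The hard step is the coefficient comparison. It needs the identity between the supremum of $|h|_p$ on the closed ball and the Gauss norm $\max_k|e_k|_p r^k$. This identity is precisely where the hypothesis $r\in p^{\mathbb{Q}}$ and the infinite residue field of $\C_p$ are used. I would cite it from \cite[Section~3]{benedetto2019dynamics}.
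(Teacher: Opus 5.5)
Your proposal is correct and follows essentially the same route as the paper: you expand $g$ and $h$ on the rational closed ball, use the identity between sup norm and Gauss norm to put every coefficient of $h$ strictly below $t$, note that the maximizing coefficients of $g$ survive the perturbation, and invoke \cite[Theorem~3.15]{benedetto2019dynamics} both for the image ball and for degree-one bijectivity. Your observation that the whole set of maximizing indices is unchanged, so that the Weierstrass degree is preserved in general, packages the paper's two separate estimates a little more cleanly. The preliminary inclusion step is harmless but not needed.
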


\begin{proof}
Being pole-free on $\BCP_r(a)$, both maps are given there by convergent power series in $z-a$, say $g(z)=g(a)+\sum_{k\ge1}a_k(z-a)^k$ and $h(z)=\sum_{k\ge0}b_k(z-a)^k$. On a rational closed ball the sup norm is the Gauss norm, so $\sup_{\BCP_r(a)}|g-g(a)|_p=\max_{k\ge1}|a_k|_pr^k$, and since that supremum is the radius of the image ball,
\[
t=\max_{k\ge1}|a_k|_p\,r^k,\qquad\text{while}\qquad |b_k|_p\,r^k\le q<t\ \ (k\ge0).
\]
Fix $k_0\ge1$ attaining the first maximum. Then $|b_{k_0}|_pr^{k_0}<|a_{k_0}|_pr^{k_0}$, so the strong triangle inequality is an equality there and $|a_{k_0}+b_{k_0}|_pr^{k_0}=t$, while $|a_k+b_k|_pr^k\le\max\{|a_k|_pr^k,|b_k|_pr^k\}\le t$ for every $k$. Hence $\max_{k\ge1}|a_k+b_k|_pr^k=t$, and the image of $\BCP_r(a)$ under $g+h$ is the closed ball of radius $t$ about $(g+h)(a)$. Its center lies within $t$ of $b$: the hypothesis gives $g(a)\in\BCP_t(b)$, so $|(g+h)(a)-b|_p=|g(a)-b+h(a)|_p\le\max\{|g(a)-b|_p,|h(a)|_p\}\le\max\{t,q\}=t$. Two closed balls of radius $t$ whose centers are within $t$ of each other coincide, so that ball is $\BCP_t(b)$.

If $g$ is bijective on $\BCP_r(a)$ its Weierstrass degree there is $1$, again by the multiplicity argument in the proof of Lemma~\ref{lem:closed_ball_bridge}(b), which by \cite[Definition~3.8]{benedetto2019dynamics} means $|a_1|_pr=t$ and $|a_k|_pr^k<t$ for $k\ge2$. The same two estimates then give $|a_1+b_1|_pr=t$ and $|a_k+b_k|_pr^k<t$ for $k\ge2$, so $g+h$ has Weierstrass degree $1$ as well and is bijective onto $\BCP_t(b)$ by \cite[Theorem~3.15]{benedetto2019dynamics}, as claimed.
\end{proof}

\noindent This is the workhorse of Section~\ref{sec:hierarchical_framework}: a perturbation whose sup norm stays strictly below the image radius changes neither the image ball nor bijectivity. The statement in~\cite{rogelio2023gluing} is for open disks, and the proof above is the closed-ball version, which is what the constructions of Section~\ref{sec:hierarchical_framework} use. Attaining the radius is what the argument needs, and a point of the image at distance exactly $t$ from $b$ would not suffice on its own, since a proper sub-ball of $\BCP_t(b)$ can meet that sphere.

%% file: 04_hierarchical_framework.tex
\section{\texorpdfstring{Hierarchical $p$-Adic Ball Dynamics}{Hierarchical p-Adic Ball Dynamics}}
\label{sec:hierarchical_framework}

Throughout this section we work with the embedding $\mathcal{C} \hookrightarrow \mathbb{Z}_p$ of Section~\ref{sec:setup} (Eq.~\eqref{eq:ball_association}), writing $\BCP(a) := \BCP_{1/p^N}(m_a)$ for the analytic ball of a configuration and $\mathcal{U} := \bigsqcup_{a} \BCP(a)$ for their union, an affinoid domain fixed by the embedding and independent of $f$. We model the discrete dynamics $f \colon \mathcal{C} \to \mathcal{C}$ by rational approximations $F_0, F_1, \ldots, F_N \in \mathbb{C}_p(z)$, one per scale, with Theorem~\ref{T1} providing the analytic backbone. Section~\ref{sec:toy_example} runs the construction in a small case ($p=2$, $N=4$), with the image-ball data in the Supplementary Material.

\subsection{Recap: rational interpreters of finite discrete dynamics}
\label{subsec:recap_interpreters}

Before constructing the multi-scale dynamics $F_0, \ldots, F_N$, we briefly recall the construction that lets us realize any finite map $f \colon \mathcal{C} \to \mathcal{C}$ as the trace of a $p$-adic rational function. This subsection is self-contained. The reader familiar with the gluing construction of~\cite{rogelio2023gluing} may skip to Section~\ref{subsec:rational_approximations}, where the radii and centers used throughout are fixed.

\paragraph{Interpreting discrete dynamics.}
The \emph{projection} $\mathrm{proj}_{\mathcal{U}} \colon \mathcal{U} \to \mathcal{C}$ sends each $x \in \mathcal{U}$ to the unique $a \in \mathcal{C}$ such that $x \in \BCP(a)$.

\begin{definition}
\label{def:interprets}
    Let $\phi \in \C_p(z)$ be a rational function. We say that $\phi$ \emph{interprets the dynamics of $f$} if
\[
\phi(\BCP(a)) \subseteq \BCP(f(a)) \quad \text{for all } a \in \mathcal{C}.
\]
Equivalently, $\phi$ maps $\mathcal{U}$ to itself and the following diagram commutes:
\[
\begin{CD}
\mathcal{U}   @>{\phi}>>  \mathcal{U}\\
@V{\mathrm{proj}_{\mathcal{U}}}VV   @VV{\mathrm{proj}_{\mathcal{U}}}V\\
\mathcal{C}   @>{f}>>     \mathcal{C}
\end{CD}
\]
\end{definition}

\begin{remark}[The finite system is a factor of the rational one]
\label{rem:semiconjugacy}
Since each $\BCP(a)$ is clopen, $\mathrm{proj}_{\mathcal{U}}$ is locally constant, hence a \emph{semi-conjugacy} from $(\mathcal{U}, \phi)$ to $(\mathcal{C}, f)$ that preserves the projected transition structure, including periodic orbits on the finite quotient. In this sense $\phi$ is a rational extension of the finite dynamics $f$, recovered as its shadow under $\mathrm{proj}_{\mathcal{U}}$.
\end{remark}

\paragraph{Existence of rational interpreters.}
Existence of a rational interpreter (Definition~\ref{def:interprets}) is a direct consequence of the $\varepsilon$-approximation theorem (Theorem~\ref{T1}, Section~\ref{sec:setup}).

\begin{theorem}[Existence of rational interpreters. Cf.~Theorem~4.2 of \cite{rogelio2023gluing}]
\label{thm:recap_interpreter_existence}
For every $f \colon \mathcal{C} \to \mathcal{C}$ and every embedding $\iota$ as above, there exists $\phi \in \mathbb{C}_p(z)$ that interprets $f$.
\end{theorem}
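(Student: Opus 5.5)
We apply Theorem~\ref{T1} to the $p^N$ configuration balls, with local pieces chosen as constants.
For each $a\in\mathcal{C}$ we put $a_i:=m_a$ and $r_i:=p^{-N}$. The balls $\BCP(a)=\BCP_{1/p^N}(m_a)$ are pairwise disjoint, because distinct integers $m_a\ne m_{a'}$ in $\{0,\dots,p^N-1\}$ satisfy $|m_a-m_{a'}|_p\ge p^{-(N-1)}>p^{-N}$. Their radius $p^{-N}$ lies in $p^{\mathbb{Q}}$, so (H1) holds.

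As local piece we would like $g_a(z):=m_{f(a)}$, the constant equal to the center of the target ball. A constant, however, maps a ball onto a single point, not onto a ball of positive radius, so it does not fit (H2) as stated. We therefore take the affine choice
\[
g_a(z):=m_{f(a)}+(z-m_a).
\]
This is a translation, so it carries $\BCP_{1/p^N}(m_a)$ bijectively onto $\BCP_{1/p^N}(m_{f(a)})=\BCP(f(a))$. Hence (H2) holds with $t_a=p^{-N}$ and $b_a=m_{f(a)}$.

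For (H3), every point of $B=\mathcal{U}$ lies in some $\BCP_{1/p^N}(m_{a'})\subset\BCP_1(0)$, since $m_{a'}\in\mathbb{Z}$. The map $g_a$ sends $z\in\BCP_1(0)$ to $m_{f(a)}-m_a+z$, which has absolute value at most $1$. So $g_a(B)\subset\BCP_1(0)$ for every $a$.

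Theorem~\ref{T1} then yields, for any $\varepsilon>0$, a rational $F_\varepsilon$ with $F_\varepsilon(\BCP(a))=\BCP(f(a))$ for every $a$. In particular $\phi:=F_\varepsilon$ satisfies $\phi(\BCP(a))\subseteq\BCP(f(a))$, which is Definition~\ref{def:interprets}, and the image is even equal to the target ball. The equivalence with the commuting diagram follows directly: the image of $\BCP(a)$ lies in $\BCP(f(a))\subset\mathcal{U}$, and $\mathrm{proj}_{\mathcal{U}}$ of any point of that image is $f(a)$.

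The only delicate point is checking that the local data meet (H2) exactly as an equality of balls, which is why we use translations rather than constants. Equality of closed balls in $\mathbb{C}_p$ under a translation is immediate. No step requires control of $\varepsilon$, so any $\varepsilon>0$ (for instance $\varepsilon=1$) works. If one wants $\phi$ to be close to the prescribed translations, choosing $\varepsilon<p^{-N}$ is harmless.
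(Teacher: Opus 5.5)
Your proposal is correct and follows the paper's argument: the paper applies Theorem~\ref{T1} to the affine isometries $g_a(z)=A_a(z-m_a)+m_{f(a)}$ with $|A_a|_p=1$, and your translation is the case $A_a=1$. Your checks of (H1)--(H3), and your use of the fact that the image equality of Theorem~\ref{T1} holds for every $\varepsilon>0$, match the paper's proof.
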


\begin{proof}
Apply the $\varepsilon$-approximation theorem (Theorem~\ref{T1}) to the local affine maps $g_a(z)=A_a(z-m_a)+m_{f(a)}$ with $|A_a|_p=1$, one for each $a\in\mathcal{C}$, on the pairwise disjoint rational balls $\{\BCP(a)\}_{a\in\mathcal{C}}$. Each $g_a$ is an isometry carrying $\BCP(a)=\BCP_{1/p^N}(m_a)$ onto $\BCP_{1/p^N}(m_{f(a)})=\BCP(f(a))$, a ball of $\C_p$ that meets $\mathbb{Z}_p$ but is not contained in it. Hypotheses (H1)--(H2) of Theorem~\ref{T1} therefore hold, and so does (H3): for $z$ in any source ball, $|g_a(z)|_p\le\max\{|A_a|_p|z-m_a|_p,\,|m_{f(a)}|_p\}\le1$, since $|A_a|_p=1$ and both $z$ and $m_a$ lie in $\BCP_1(0)$, so $g_a$ carries the whole of $\bigsqcup_b\BCP(b)$ into $\BCP_1(0)$. Conclusion~\ref{T1:image} of Theorem~\ref{T1} is an equality for every $\varepsilon>0$, so the resulting $F_\varepsilon$ satisfies $F_\varepsilon(\BCP(a))=\BCP(f(a))$ for every $a$ with no smallness assumption, hence interprets $f$ in the sense of Definition~\ref{def:interprets}. The explicit partition-of-unity form of $F_\varepsilon$ is the one recalled after Theorem~\ref{T1}.
\end{proof}

\noindent The interpreter is not unique, and Section~\ref{subsec:rational_approximations} below uses this freedom to build a hierarchy of approximations $F_0, \ldots, F_N$ at each resolution level. Interpreters are studied on their own in~\cite{perez2026interpreters}, where existence is obtained by rigid-analytic Runge approximation and the contracting, indifferent and expanding regimes are read off a linear-dominance condition. That work fixes a single interpreter. What is built here is a family indexed by resolution, one approximant per scale, with image radii prescribed by the finite data (Theorem~\ref{thm:existence_approximations}).

\paragraph{Downstream requirements.}
The remainder of the paper requires only three ingredients from this construction:

\begin{enumerate}
\item that $\phi$ exists (Theorem~\ref{thm:recap_interpreter_existence});
\item that the canonical radii $t_{n,m}$, equivalently the common-prefix exponents $M_{n,m}$, are determined by $(f,\iota)$ alone and bound the analytic image radii of any interpreter (Remark~\ref{rem:tnm_computation}, below); and
\item the attracting/repelling/indifferent fixed-point trichotomy for non-Archimedean rational dynamics on a closed ball (Lemma~\ref{lem:closed_ball_bridge}, Section~\ref{sec:setup}).
\end{enumerate}

The interpreter enters through its existence, and Section~\ref{sec:stability_measure} computes the scores from $(f,\iota)$ directly.

\begin{definition}[Exact interpreter]
\label{def:exact_interpreter}
An interpreter $\phi$ of $f$ is \emph{exact} if the inclusion of Definition~\ref{def:interprets} is an equality:
\[
\phi(\BCP(a)) = \BCP(f(a)) \quad \text{for all } a \in \mathcal{C}.
\]
\end{definition}

\begin{proposition}[Existence of an exact interpreter]
\label{prop:exact_existence}
For every $f \colon \mathcal{C} \to \mathcal{C}$ and every embedding $\iota$ there exists an exact interpreter.
\end{proposition}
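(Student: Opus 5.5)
The plan is to reuse the construction in the proof of Theorem~\ref{thm:recap_interpreter_existence} and note that it already gives equality. For each $a\in\mathcal{C}$ we take the local affine map
\[
g_a(z)=A_a(z-m_a)+m_{f(a)},\qquad |A_a|_p=1,
\]
for instance $A_a=1$. The first step is to check that $g_a$ carries $\BCP(a)=\BCP_{1/p^N}(m_a)$ onto $\BCP_{1/p^N}(m_{f(a)})=\BCP(f(a))$ exactly. It is an affine bijection of $\C_p$ with $|g_a(x)-g_a(y)|_p=|x-y|_p$, so it maps the closed ball of radius $p^{-N}$ about $m_a$ onto the closed ball of the same radius about $g_a(m_a)=m_{f(a)}$. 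This gives (H2) with $t_a=p^{-N}$ and $b_a=m_{f(a)}$.

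The second step is to verify the remaining hypotheses of Theorem~\ref{T1}. For (H1), the balls $\BCP(a)$ are pairwise disjoint, because distinct configurations have distinct residues $m_a\bmod p^N$, so $|m_a-m_{a'}|_p>p^{-N}$. Their radius $p^{-N}$ lies in $p^{\mathbb{Q}}$. For (H3), each $g_a$ is a polynomial. Since every point of $\mathcal{U}$ and every $m_a$ lies in $\BCP_1(0)$, the ultrametric inequality gives $|g_a(z)|_p\le\max\{|z-m_a|_p,|m_{f(a)}|_p\}\le1$ on all of $\mathcal{U}$, exactly as in the proof of Theorem~\ref{thm:recap_interpreter_existence}.

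The third step applies Theorem~\ref{T1} with any $\varepsilon>0$. Conclusion~\ref{T1:image} gives $F_\varepsilon(\BCP(a))=\BCP(f(a))$ for every $a$, and $\phi:=F_\varepsilon$ is then an exact interpreter in the sense of Definition~\ref{def:exact_interpreter}. Taking $\varepsilon<p^{-N}$ adds, by Lemma~\ref{lem:image_ball_perturbation}, that $\phi$ is even bijective on each $\BCP(a)$, since each $g_a$ is. That bonus is not needed for exactness.

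The only delicate point is that the conclusion is an equality of balls and not just an inclusion. Here I would rely on the explanation after Theorem~\ref{T1}. The poles of the factors $h_j$ lie on spheres that miss every input ball, so $F_\varepsilon$ has no pole on $\mathcal{U}$. On each $\BCP(a)$ the construction writes $F_\varepsilon=g_a+h$ with $\sup|h|_p<p^{-N}=t_a$. Lemma~\ref{lem:image_ball_perturbation} then upgrades the inclusion $F_\varepsilon(\BCP(a))\subseteq\BCP(f(a))$ to equality, because the Gauss-norm radius of the image is still attained by the leading coefficient of $g_a$. Once this step is granted as part of Theorem~\ref{T1}, the proposition follows directly.
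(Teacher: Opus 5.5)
Your proposal is correct and follows the paper's proof. In both, the interpreter built in the proof of Theorem~\ref{thm:recap_interpreter_existence} is already exact, because Conclusion~\ref{T1:image} of Theorem~\ref{T1} is an equality for every $\varepsilon>0$; choosing $\varepsilon<p^{-N}$ and applying Lemma~\ref{lem:image_ball_perturbation} then adds bijectivity on each $\BCP(a)$ as an optional bonus.
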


\begin{proof}
Note that the interpreter produced in the proof of Theorem~\ref{thm:recap_interpreter_existence} is already exact. Conclusion~\ref{T1:image} of Theorem~\ref{T1} returns the equality $F_\varepsilon(\BCP(a)) = \BCP(f(a))$ for every $a \in \mathcal{C}$, which is Definition~\ref{def:exact_interpreter}. Moreover, choosing $\varepsilon < 1/p^N$ makes $F_\varepsilon - g_a$ of sup norm strictly below the image radius on each source ball, so Lemma~\ref{lem:image_ball_perturbation} gives bijectivity of $F_\varepsilon$ on each $\BCP(a)$, since each local isometry $g_a$ is bijective. Non-injectivity of $f$ is no obstruction: distinct source balls may share a common image ball.
\end{proof}

\subsection{Hierarchical Rational Approximations}
\label{subsec:rational_approximations}

Let $\phi \in \C_p(z)$ interpret $f$. We construct approximations $F_0, \ldots, F_N \in \C_p(z)$ that agree with $\phi$ on each level-$n$ ball to within the prescribed image radius $t_{n,m}$ of that ball (Proposition~\ref{Prop:AproxFnPhi}). In particular $|F_N(z) - \phi(z)|_p \leq 1/p^N$ for all $z \in \mathbb{Z}_p$. The $F_n$ yield the ball-level classification and the local fixed-point structure (Section~\ref{subsec:quasi_dynamics}).

For $n \geq 0$, $\mathbb{Z}_p = \bigsqcup_{m=0}^{p^n-1} \BZP_{1/p^n}(m)$. Write $B_{n,m} := \BZP_{1/p^n}(m) \subset \mathbb{Z}_p$, so $B_{N,m} = \BZP(m) = \BZP_{1/p^N}(m)$. Each $a \in \mathcal{C}$ has center $m_a = \sum_{i=0}^{N-1} a_i p^i$. Thus $\BCP(a) = \BCP_{1/p^N}(m_a)$ and $\iota(a) = \BZP(a) = \BZP_{1/p^N}(m_a)$. For any $z \in \mathbb{Z}_p$ and $n \le N$, the \emph{$n$-digit truncation} is $\operatorname{trunc}_n(z) := z \bmod p^n \in \{0,\dots,p^n-1\}$. For a configuration $a \in \mathcal{C}$, we write $\operatorname{trunc}_n(a)$ as shorthand for $\operatorname{trunc}_n(m_a)$, where $m_a \in \mathbb{Z}$ is the integer center of $\BCP(a)$. Then $z \in B_{n,m}$ if and only if $\operatorname{trunc}_n(z) = m$.

For each ball $B_{n,m}$, define first the canonical common-prefix exponent
\begin{equation}
\label{eq:Mnm_definition}
\begin{aligned}
M_{n,m}:=\max\Bigl\{k\in\{0,\ldots,N\}:{}&\ m_{f(a)}\equiv m_{f(b)} \pmod{p^k}\\
&\text{for all } a,b\in\mathcal{C}
\text{ with } \operatorname{trunc}_n(a)=\operatorname{trunc}_n(b)=m\Bigr\}.
\end{aligned}
\end{equation}
The set is nonempty because $k=0$ always works. Thus $M_{n,m}$ is the number of initial ordered digits common to all image configurations of points lying in the level-$n$ ball indexed by $m$. Set
\begin{equation}
\label{eq:tnm_definition}
t_{n,m}:=p^{-M_{n,m}}.
\end{equation}
This is a finite combinatorial quantity. It bounds the diameter of $\phi(B_{n,m})\subset\mathbb{C}_p$ for every interpreter $\phi$, and it is attained by the exact interpreters constructed in Proposition~\ref{prop:exact_existence} (Proposition~\ref{prop:radius_attained}). Exactness of the analytic image ball alone does not force the attainment on the observable points. The same number will be used below as the prescribed image radius for the $n$-th approximants.

\begin{remark}[Computation of $t_{n,m}$ from discrete data]
\label{rem:tnm_computation}
The values $t_{n,m}$ can be computed directly from the transition table of $f$:

\begin{enumerate}
    \item Enumerate the configurations $a\in\mathcal{C}$ with $\operatorname{trunc}_n(a)=m$.
    
    \item For their images, compute the largest $k$ such that all integer centers $m_{f(a)}$ are congruent modulo $p^k$.
    
    \item Set $M_{n,m}=k$ and $t_{n,m}=p^{-k}$.
\end{enumerate}

Equivalently,
\[
t_{n,m}=\max\Bigl(\{p^{-N}\}\cup\{|m_{f(a)}-m_{f(b)}|_p:
\operatorname{trunc}_n(a)=\operatorname{trunc}_n(b)=m\}\Bigr).
\]
For any interpreter $\phi$ and any configurations $a,b$ in the same level-$n$ ball,
	    \begin{equation}
	    \label{eq:distance_from_images}
	    \begin{cases}
        |\phi(x) - \phi(y)|_p \leq 1/p^N, & \text{if } f(a) = f(b), \\
        |\phi(x) - \phi(y)|_p = 1/p^j, & \text{if } f(a) \neq f(b),
	    \end{cases}
	    \end{equation}
where $x\in\BCP(a)$, $y\in\BCP(b)$, and $j$ is the first digit at which $m_{f(a)}$ and $m_{f(b)}$ differ. Thus $\diam(\phi(B_{n,m}))\le t_{n,m}$, with equality for the exact interpreters constructed above.

The entire analysis (classification of Definition~\ref{def:quasi_dynamics} in Section~\ref{subsec:quasi_dynamics} below, and the scores of Section~\ref{sec:stability_measure}) therefore depends only on $f$, not on $\phi$.
\end{remark}

\begin{lemma}[Monotonicity of $t_{n,m}$]
\label{lem:tnm_monotonicity}
If $\BZP_{1/p^{n+1}}(m') \subseteq \BZP_{1/p^n}(m)$, then $t_{n+1,m'} \leq t_{n,m}$.
\end{lemma}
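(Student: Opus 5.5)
The plan is to reduce the statement to an inclusion of index sets followed by the definition of $M_{n,m}$ in \eqref{eq:Mnm_definition} as a maximum. Since $t_{n,m}=p^{-M_{n,m}}$ by \eqref{eq:tnm_definition}, the claim $t_{n+1,m'}\le t_{n,m}$ is equivalent to $M_{n+1,m'}\ge M_{n,m}$. This is what I would prove.

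The first step is to translate the ball inclusion into a statement about truncations. If $B_{n+1,m'}=\BZP_{1/p^{n+1}}(m')\subseteq\BZP_{1/p^n}(m)=B_{n,m}$, then $m'\in B_{n,m}$. Since $z\in B_{n,m}$ if and only if $\operatorname{trunc}_n(z)=m$, this gives $m'\equiv m\pmod{p^n}$. For any configuration $a$, $\operatorname{trunc}_n(m_a)$ is the reduction of $\operatorname{trunc}_{n+1}(m_a)$ modulo $p^n$, because reducing modulo $p^{n+1}$ and then modulo $p^n$ is the same as reducing modulo $p^n$ directly. Hence $\operatorname{trunc}_{n+1}(a)=m'$ implies $\operatorname{trunc}_n(a)=m$. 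Writing $S_{n,m}:=\{a\in\mathcal{C}:\operatorname{trunc}_n(a)=m\}$, this says $S_{n+1,m'}\subseteq S_{n,m}$.

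The second step is the monotonicity of the maximum. Let $k=M_{n,m}$. By definition, $m_{f(a)}\equiv m_{f(b)}\pmod{p^k}$ for all $a,b\in S_{n,m}$, and therefore in particular for all $a,b\in S_{n+1,m'}$. So $k$ belongs to the set whose maximum defines $M_{n+1,m'}$, and hence $M_{n+1,m'}\ge k$. If $S_{n+1,m'}$ is empty, the condition holds vacuously for every $k\le N$, so $M_{n+1,m'}=N\ge M_{n,m}$. This case cannot actually occur for $n+1\le N$, because then every residue class is hit by some $m_a$.

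There is no genuine obstacle. The only point needing care is the indexing convention: the definition ranges over $n\le N$ through $\operatorname{trunc}_n$. I would therefore state the lemma for $0\le n<N$, note that the argument is purely set-theoretic, and observe that it applies verbatim to the equivalent formula for $t_{n,m}$ in Remark~\ref{rem:tnm_computation}, which is a maximum of $p$-adic distances over a set that shrinks under the inclusion.
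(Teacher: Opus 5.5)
Your proof is correct and follows the paper's argument. The configurations of the level-$(n+1)$ ball form a subset of those of the level-$n$ ball, so the common-prefix exponent can only grow, $M_{n+1,m'}\ge M_{n,m}$, which gives $t_{n+1,m'}\le t_{n,m}$; your truncation bookkeeping and your treatment of the empty-fiber case, which cannot occur, only make that same step explicit.
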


\begin{proof}
The configurations in the sub-ball $\BZP_{1/p^{n+1}}(m')$ form a subset of the configurations in $\BZP_{1/p^n}(m)$. Note that taking the common prefix length over fewer image configurations can only increase $M_{n,m}$, so $M_{n+1,m'}\ge M_{n,m}$ and therefore $t_{n+1,m'}=p^{-M_{n+1,m'}}\le p^{-M_{n,m}}=t_{n,m}$, as required.
\end{proof}

\medskip

The radius $t_{n,m}$ prescribes how large the image of a level-$n$ ball must be. Its center is prescribed too, and by the same finite data.

\begin{definition}[The canonical target ball]
\label{def:canonical_target}
Fix $0\le n\le N$ and $m\in\{0,\dots,p^n-1\}$. The level-$n$ ball indexed by $m$ is nonempty, since the residue class $m$ has exactly $p^{N-n}$ configurations extending it, and by~\eqref{eq:Mnm_definition} the image centers $m_{f(a)}$ of all of them agree modulo $p^{M_{n,m}}$. The residue class
\begin{equation}
\label{eq:canonical_center}
\bar\beta_{n,m}\;:=\;m_{f(a)}\bmod p^{M_{n,m}}\;\in\;\mathbb{Z}/p^{M_{n,m}}\mathbb{Z},
\qquad \text{any } a \text{ with } \operatorname{trunc}_n(a)=m,
\end{equation}
is therefore well defined and determined by $(f,\iota)$ alone. Let $\beta_{n,m}\in\mathbb{Z}_p$ be any lift of $\bar\beta_{n,m}$. The \emph{canonical target ball} at $(n,m)$ is the analytic ball $\BCP_{t_{n,m}}(\beta_{n,m})\subset\C_p$, whose observable trace is the residue class $\beta_{n,m}+p^{M_{n,m}}\mathbb{Z}_p$. Neither depends on the lift, since every point of a non-Archimedean ball is a center: the canonical object is the residue class~\eqref{eq:canonical_center}, and the choice of $\beta_{n,m}$ is notation.
\end{definition}

\begin{remark}[The canonical target holds every image configuration of the source ball]
\label{rem:target_contains_images}
For every $a$ with $\operatorname{trunc}_n(a)=m$ one has $|m_{f(a)}-\beta_{n,m}|_p\le p^{-M_{n,m}}=t_{n,m}$, so
\[
\bigl\{\,m_{f(a)}\ :\ \operatorname{trunc}_n(a)=m\,\bigr\}\ \subseteq\ \BCP_{t_{n,m}}(\beta_{n,m}).
\]
This restates the definition of $M_{n,m}$. Among the balls of the hierarchy, $\BCP_{t_{n,m}}(\beta_{n,m})$ is the smallest that contains all the image configurations of the source ball, since the maximality in~\eqref{eq:Mnm_definition} rules out every smaller radius.
\end{remark}

\medskip

\noindent\emph{Standing assumption for the propositions that follow.} From this point through the end of Section~\ref{sec:hierarchical_framework}, $\phi$ denotes an exact interpreter (Definition~\ref{def:exact_interpreter}), which exists for every $f$ by Proposition~\ref{prop:exact_existence}, so that $\phi(\BCP(a)) = \BCP(f(a))$.

\begin{definition}[$n$-th approximation]
\label{def:nth_approximation}
An \emph{$n$-th approximation of $f$}, also called an \emph{$n$-th approximant}, is a rational function $F_n \in \C_p(z)$ whose image of every level-$n$ ball is the canonical target ball of Definition~\ref{def:canonical_target}:
\[
F_n(\BCP_{1/p^n}(m)) = \BCP_{t_{n,m}}(\beta_{n,m}) \quad\text{as balls in } \C_p,
\qquad m \in \{0, \ldots, p^n-1\}.
\]
\end{definition}

\noindent Radius and center are thus both fixed by $(f,\iota)$ before any interpreter is chosen: an $n$-th approximant must send each source ball to the place the finite data prescribe. The integral center also guarantees a nonempty observable trace.

\noindent By Remark~\ref{rem:target_contains_images}, $\phi(m)$ lies in the canonical target ball for the exact interpreter $\phi$: taking $a$ to be the configuration with $\operatorname{trunc}_n(a)=m$ and all later digits zero gives $\phi(m)\in\BCP(f(a))\subseteq\BCP_{t_{n,m}}(\beta_{n,m})$. So the same ball may be written about $\phi(m)$ whenever that is convenient, as it is in Proposition~\ref{Prop:AproxFnPhi}.

\noindent The standing assumption isolates the sharp statements below from the generic case of Remark~\ref{rem:exact_vs_generic}, where the equality weakens to the inclusion $\subseteq$. For generic interpreters merely satisfying Definition~\ref{def:interprets}, the propositions go through with $\subseteq$ in place of $=$ in their hypotheses, since the proofs use only the upper bounds $|F_n(z) - \phi(m)|_p \le 1/p^M$.

For a generic interpreter the analytic image is only \emph{contained} in $\BCP_{t_{n,m}}(\beta_{n,m})$ and can be strictly smaller, e.g.\ when $f$ is constant on $B_{n,m}$ (where the finite transition data give $t_{n,m} = 1/p^N$ while the actual image may be a single point). The combinatorial $t_{n,m}$ and the scores $\mu_E, \mu_A, \mu_I$ are unaffected (Proposition~\ref{thm:intrinsic_mu} and Remark~\ref{rem:exact_vs_generic}, both in Section~\ref{sec:stability_measure} below).

\begin{theorem}[Existence of hierarchical approximations]
\label{thm:existence_approximations}
Let $f:\mathcal{C}\to\mathcal{C}$ be a discrete dynamical system. For each resolution level $n \in \{0, \ldots, N\}$, there exists a rational function $F_n \in \C_p(z)$ satisfying Definition~\ref{def:nth_approximation}. That is, $F_n(\BCP_{1/p^n}(m)) = \BCP_{t_{n,m}}(\beta_{n,m})$ is a ball of radius $t_{n,m}$ for every $m$.
\end{theorem}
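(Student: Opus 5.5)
Fix $n$ and build $F_n$ with the gluing theorem (Theorem~\ref{T1}). The source balls are the $p^n$ analytic balls $\BCP_{1/p^n}(m)$, $m\in\{0,\dots,p^n-1\}$. They are pairwise disjoint, since distinct residues mod $p^n$ are at distance at least $p^{-n+1}>p^{-n}$, and their radii lie in $p^{\mathbb{Z}}\subset|\C_p^\times|$. So (H1) holds. On each ball I will prescribe a local model
\[
g_m(z)=\beta_{n,m}+\lambda_m\,(z-m),\qquad |\lambda_m|_p=p^{\,n-M_{n,m}}=t_{n,m}\,p^{n}.
\]
One can take $\lambda_m=p^{M_{n,m}-n}$, an integer power of $p$. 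This affine map carries $\BCP_{1/p^n}(m)$ bijectively onto $\BCP_{t_{n,m}}(\beta_{n,m})$, which gives (H2). Theorem~\ref{T1} then returns $F_n$ with $F_n(\BCP_{1/p^n}(m))=\BCP_{t_{n,m}}(\beta_{n,m})$ for every $m$. That is exactly Definition~\ref{def:nth_approximation}, and conclusion~\ref{T1:image} holds for every $\varepsilon>0$. The cases $n=0$ (one ball, $\mathbb{Z}_p$ itself) and $M_{n,m}=N$ need no special treatment beyond this formula.

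The work is in (H3), which asks that every $g_m$ map the whole union $B=\bigcup_{m'}\BCP_{1/p^n}(m')$ into $\BCP_1(0)$. This splits by the sign of $n-M_{n,m}$.

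\emph{Contracting or isometric case, $M_{n,m}\ge n$.} Here $|\lambda_m|_p\le1$. For any $z\in B\subset\BCP_1(0)$ we get $|g_m(z)|_p\le\max\{|\beta_{n,m}|_p,|\lambda_m|_p|z-m|_p\}\le1$, exactly as in the proof of Theorem~\ref{thm:recap_interpreter_existence}.

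\emph{Expanding case, $M_{n,m}<n$.} Here $|\lambda_m|_p>1$, and $g_m$ blows up points of other source balls. A point at distance $p^{-j}$ from $m$, with $j<M_{n,m}$, is sent outside $\BCP_1(0)$, so (H3) fails as stated. This is the main obstacle, and it is where the half-integer pole sphere mentioned in the introduction must enter.

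To handle the expanding case I will damp $g_m$ with a localization factor. I replace it by
\[
\tilde g_m(z)=\beta_{n,m}+\lambda_m(z-m)\,\bigl(1-((z-m)/c)^{\kappa}\bigr)^{-1},\qquad |c|_p=p^{1/2-n},
\]
with $\kappa$ a large even integer. The poles then lie on the sphere $|z-m|_p=p^{1/2-n}$, which misses $\mathbb{Z}_p$ and every source ball because $p^{1/2-n}\notin p^{\mathbb{Z}}$.

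Checking the damped map splits by where $z$ lies.
\begin{itemize}
\item On the home ball, $|z-m|_p\le p^{-n}$, so $|((z-m)/c)^\kappa|_p\le p^{-\kappa/2}<1$. The factor is $1+O(p^{-\kappa/2})$, and the correction term has sup norm at most $|\lambda_m|_p p^{-n}p^{-\kappa/2}=t_{n,m}p^{-\kappa/2}<t_{n,m}$. Lemma~\ref{lem:image_ball_perturbation} then shows that $\tilde g_m$ still maps $\BCP_{1/p^n}(m)$ bijectively onto $\BCP_{t_{n,m}}(\beta_{n,m})$, so (H2) survives.
\item On another source ball, $|z-m|_p\ge p^{1-n}>|c|_p$. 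Now $|((z-m)/c)^\kappa|_p\ge p^{\kappa/2}$, and the damped term has absolute value at most $|\lambda_m|_p|z-m|_p\,(|c|_p/|z-m|_p)^{\kappa}$. Since $|z-m|_p\le1$, this is at most $p^{n-M_{n,m}}\,p^{-\kappa/2}$. Choosing $\kappa\ge 2n$, uniformly in $m$, makes it $\le1$, which gives (H3).
\end{itemize}

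The ultrametric estimate of $|1-w^\kappa|_p=|w|_p^\kappa$ when $|w|_p>1$ is routine. The point to verify carefully is that the half-integer exponent keeps the pole sphere disjoint from $B$.

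With (H1)--(H3) in hand for the family $\{\tilde g_m\}$ (using the undamped $g_m$ in the non-expanding case), Theorem~\ref{T1} produces $F_n$ with the required image equalities. I will also record two facts for later use. First, $F_n$ has no poles on $B$: those of the glued factors lie on the sphere $|z-m|_p=p^{1/2-n}$, consistent with Remark~\ref{rem:wdeg_finite}. Second, taking $\varepsilon<\min_m t_{n,m}$ gives bijectivity on each source ball, via Lemma~\ref{lem:image_ball_perturbation}.
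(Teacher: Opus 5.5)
Your proof is correct and follows the paper's: bare affine pieces on the non-expanding balls, a rational localization factor with poles on the half-integer sphere $|z-m|_p=p^{1/2-n}$ on the expanding balls, then Theorem~\ref{T1}, with Lemma~\ref{lem:image_ball_perturbation} supplying (H2) and bijectivity. The only difference is that the paper fixes $\kappa=2$. That choice already suffices because on another source ball the estimate $|z-m|_p\ge p^{1-n}$ in the denominator gives $|\hat g_m(z)-\beta_{n,m}|_p=t_{n,m}p^{1-n}/|z-m|_p\le t_{n,m}\le 1$, where your cruder bound $|z-m|_p\le 1$ forces $\kappa\ge 2n$.
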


\noindent\textit{Strategy of the proof.} For each $m$ we exhibit a local rational piece $\hat g_m$ satisfying three properties (P1)--(P3), which together yield the hypotheses (H1)--(H3) of Theorem~\ref{T1}. The bare affine suffices when the image radius does not exceed the source radius ($t_{n,m} \le 1/p^n$, the non-expanding case). When $t_{n,m} > 1/p^n$ (expanding case) we localize it with a rational gluing factor of degree~$2$ whose pole sphere has half-integer radius, hence misses $\mathbb{Z}_p$. Theorem~\ref{T1} then produces a global $F_n$ within $\varepsilon$ of each $\hat g_m$ on its own ball, and~Lemma~\ref{lem:image_ball_perturbation} closes the image-ball equality and injectivity.

\begin{proof}
Fix $0 \le n \le N$. For $n=0$ the partition is the single ball $\mathbb{Z}_p$ and $F_0(z) := A_0\,z + \beta_{0,0}$ with $|A_0|_p = t_{0,0}$ is one affine map onto $\BCP_{t_{0,0}}(\beta_{0,0})$. Assume $n \ge 1$ below. For each $m \in \{0, \ldots, p^n-1\}$ take as the lift of the canonical class~\eqref{eq:canonical_center} the integer center $\beta_{n,m} := m_{f(a)}$, for any configuration $a$ with $\operatorname{trunc}_n(a)=m$, so that $\beta_{n,m}\in\mathbb{Z}_p$ and $|\beta_{n,m}|_p \le 1$. Write $\Lambda := t_{n,m}\,p^n$. The balls $\{\BCP_{1/p^n}(m)\}_{m=0}^{p^n-1}$ are pairwise disjoint (Section~\ref{subsec:embedding}).

We exhibit a family $\{\hat{g}_m\}_{m=0}^{p^n-1} \subset \mathbb{C}_p(z)$ satisfying:
\begin{description}
\item[(P1)] $\hat{g}_m$ maps $\BCP_{1/p^n}(m)$ bijectively onto $\BCP_{t_{n,m}}(\beta_{n,m})$;
\item[(P2)] for every $m' \ne m$, $\hat{g}_m(\BCP_{1/p^n}(m')) \subset \BCP_1(0)$;
\item[(P3)] $\hat{g}_m$ has no pole on $\mathbb{Z}_p$.
\end{description}
These imply the hypotheses of Theorem~\ref{T1}, as collected at the end of the proof: each $\hat{g}_m$ is rational on the affinoid $\mathcal{U} \subset \BCP_1(0)$, with image in the unit ball.

\emph{Case A: $\Lambda \le 1$ (non-expanding ball).} Take the bare affine $\hat{g}_m(z) := A_m(z - m) + \beta_{n,m}$ with $A_m := p^{\,M_{n,m}-n}\in\mathbb{Q}_p$, so that $|A_m|_p = \Lambda$. (P3) is immediate. For $z \in \BCP_{1/p^n}(m)$, $|\hat{g}_m(z) - \beta_{n,m}|_p = \Lambda\,|z-m|_p$ attains $\Lambda \cdot p^{-n} = t_{n,m}$ on the boundary and is strictly smaller in the interior. The affine map is bijective, giving (P1). For $z$ in any other source ball, $|\hat{g}_m(z)|_p \le \max\{\Lambda\,|z-m|_p,\,|\beta_{n,m}|_p\} \le 1$ since $\Lambda \le 1$, $|z-m|_p \le 1$, and $|\beta_{n,m}|_p \le 1$, giving (P2).

\emph{Case B: $t_{n,m} > 1/p^n$ (expanding ball).} The bare affine violates~(P2): on a distinct ball $|A_m(z-m)|_p$ reaches $\Lambda > 1$. Localize with a rational gluing factor of degree~$2$:
\begin{equation}
\label{eq:ghat_expander}
\begin{gathered}
\hat{g}_m(z) \;:=\; \frac{A_m(z - m)}{1 - \bigl((z - m)/c_m\bigr)^{2}} + \beta_{n,m},\\[3pt]
A_m := p^{\,M_{n,m}-n}\in\mathbb{Q}_p,\qquad |A_m|_p = \Lambda,\qquad |c_m|_p = p^{\,1/2 - n}.
\end{gathered}
\end{equation}
Note that the formula uses only $c_m^{\,2}$. Choose the representative $c_m^{\,2} := p^{\,2n - 1} \in \mathbb{Q}_p$, of integer valuation $2n - 1$ (equivalently $|c_m^{\,2}|_p = p^{\,1 - 2n}$, consistent with $|c_m|_p = p^{\,1/2 - n}$). Then $\hat{g}_m \in \mathbb{Q}_p(z)$, although its poles, the two square roots of $c_m^{\,2}$, lie in the ramified extension $\mathbb{Q}_p(\sqrt{p}\,)$. The poles sit on the sphere $|z - m|_p = p^{\,1/2 - n}$, of half-integer radius. For every $\zeta \in \mathbb{Z}_p$ the value $|\zeta - m|_p$ is either $0$, when $\zeta = m$, or an integer power $p^{-k}$ with $k \ge 0$. Neither is $p^{\,1/2-n}$, so no point of $\mathbb{Z}_p$ lies on that sphere of half-integer exponent, giving (P3).

For $z \in \BCP_{1/p^n}(m)$, $|z - m|_p \le p^{-n}$ and $|c_m|_p = p^{\,1/2 - n}$ give $|(z-m)/c_m|_p^{\,2} \le p^{-1} < 1$, so by the strong triangle inequality $|1 - ((z-m)/c_m)^2|_p = 1$ and
\[
\hat{g}_m(z) - \beta_{n,m} \;=\; A_m(z - m)\,\bigl(1 - u\bigr)^{-1},
\qquad u := \bigl((z-m)/c_m\bigr)^{2},\ |u|_p \le 1/p.
\]
The geometric expansion $\bigl(1 - u\bigr)^{-1} = 1 + u + u^2 + \cdots$ converges in $\mathbb{C}_p$ since $|u|_p < 1$, so $\hat{g}_m(z) - \beta_{n,m} = A_m(z - m) + r(z)$ with $\sup_{\BCP_{1/p^n}(m)} |r|_p \le |A_m(z-m)|_p \cdot |u|_p \le t_{n,m}/p < t_{n,m}$. The bare affine $z \mapsto A_m(z-m) + \beta_{n,m}$ is bijective from $\BCP_{1/p^n}(m)$ onto $\BCP_{t_{n,m}}(\beta_{n,m})$, and the perturbation $r$ has no pole on that ball, its poles being the two square roots of $c_m^{\,2}$ on the sphere of radius $p^{\,1/2-n}>p^{-n}$, and sup norm strictly below the image radius. By~Lemma~\ref{lem:image_ball_perturbation}, $\hat{g}_m$ maps $\BCP_{1/p^n}(m)$ onto the same ball $\BCP_{t_{n,m}}(\beta_{n,m})$ and is injective there, giving (P1).

Observe that for $z$ in any other source ball, $|z - m|_p \ge p^{\,1-n}$, so $|(z-m)/c_m|_p^{\,2} \ge p > 1$ and $|1 - ((z-m)/c_m)^2|_p = |(z-m)/c_m|_p^{\,2}$. Hence
\begin{equation}
\label{eq:Fn_split}
\begin{split}
\Bigl|\hat{g}_m(z) - \beta_{n,m}\Bigr|_p
&\;=\; \frac{|A_m|_p\,|c_m|_p^{\,2}}{|z - m|_p}
\;=\; \frac{(t_{n,m}\,p^n) \cdot p^{\,1 - 2n}}{|z - m|_p}
\;=\; \frac{t_{n,m}\, p^{\,1-n}}{|z - m|_p}\\
&\;\le\; \frac{t_{n,m}\, p^{\,1-n}}{p^{\,1-n}}
\;=\; t_{n,m} \;\le\; 1,
\end{split}
\end{equation}
so $|\hat{g}_m(z)|_p \le \max\{|\hat{g}_m - \beta_{n,m}|_p,\,|\beta_{n,m}|_p\} \le 1$, giving (P2).

The family $\{\hat{g}_m\}_{m=0}^{p^n-1}$ now satisfies the hypotheses of Theorem~\ref{T1}. The source balls are pairwise disjoint and their common radius $p^{-n}$ lies in $p^{\mathbb{Q}}$, so they are rational, giving (H1). Property (P1) is the exact prescribed image required by (H2). Every image lies in $\BCP_1(0)$: on the own ball by (P1), since $|\beta_{n,m}|_p \le 1$ and $t_{n,m} \le 1$, and on the other balls by (P2), giving (H3); property (P3) makes each $\hat g_m$ defined on all of $\mathbb{Z}_p$. By Theorem~\ref{T1}, for every $\varepsilon > 0$ there exists $F_n^{(\varepsilon)} \in \mathbb{C}_p(z)$ with $\sup_{z \in \BCP_{1/p^n}(m)} |F_n^{(\varepsilon)}(z) - \hat{g}_m(z)|_p < \varepsilon$ for every $m$, and $F_n^{(\varepsilon)}(\mathbb{Z}_p) \subset \BCP_1(0)$. Choose any $\varepsilon < \min_{m} t_{n,m}$ and set $F_n := F_n^{(\varepsilon)}$. On each $\BCP_{1/p^n}(m)$,
\begin{equation}
\label{eq:Fn_sup_bound}
\sup_{z \in \BCP_{1/p^n}(m)} |F_n(z) - \hat{g}_m(z)|_p \;\le\; \varepsilon \;<\; t_{n,m},
\end{equation}
the image radius of $\hat{g}_m$, which is bijective onto $\BCP_{t_{n,m}}(\beta_{n,m})$. Theorem~\ref{T1} gives its inequality pointwise, so the supremum is bounded by $\varepsilon$, which suffices because $\varepsilon<t_{n,m}$. Neither map has a pole on $\BCP_{1/p^n}(m)$: those of $\hat g_m$ lie on a sphere of radius $p^{\,1/2-n}>p^{-n}$ in the expanding case and do not exist in the other, while the image equality of Theorem~\ref{T1} makes $F_n(\BCP_{1/p^n}(m))$ a bounded ball of $\C_p$, and a pole of $F_n$ in the source ball would place $\infty$ in its image. By~Lemma~\ref{lem:image_ball_perturbation}, $F_n$ maps $\BCP_{1/p^n}(m)$ onto the same image ball $\BCP_{t_{n,m}}(\beta_{n,m})$ and is injective there. This completes the proof.
\end{proof}

\begin{remark}
\label{rem:existence_uniformity}
The image-ball equality of Definition~\ref{def:nth_approximation} and the injectivity of $F_n$ on each source ball hold for any $\varepsilon < \min_m t_{n,m}$, by~Lemma~\ref{lem:image_ball_perturbation}, and not merely in the limit $\varepsilon\to0$. That injectivity is the degree-one input for the degree-one hypothesis of Lemma~\ref{lem:closed_ball_bridge}(b).
\end{remark}

This yields a finite sequence $F_0, F_1, \ldots, F_N$ satisfying Definition~\ref{def:nth_approximation}. The approximation bound $|\phi(z) - F_n(z)|_p \le t_{n,m}$ for $z \in B_{n,m}$ is given by Proposition~\ref{Prop:AproxFnPhi} below. Non-uniqueness and invariance of classification are in Proposition~\ref{prop:perturbation} and Section~\ref{sec:stability_measure}.

\subsection{Properties of the Approximation Sequence}
\label{subsec:approximation_properties}

Two properties of $\{F_n\}$ are established next, under the standing assumption: $\phi$ is exact, so Definition~\ref{def:nth_approximation} may be read as $F_n(\BCP_{1/p^n}(m)) = \BCP_{t_{n,m}}(\phi(m))$, since $\phi(m)$ lies in the canonical target ball and every point of a ball is a center. The first says that $F_n$ approximates $\phi$ to the accuracy of the level-$n$ image radius, the second that the approximants are stable under small perturbations.

\begin{proposition}
\label{Prop:AproxFnPhi}
    Let $F_n$ be an $n$-th approximation and let $m \in \{0,\ldots,p^n-1\}$, with $M_{n,m}$ the canonical exponent ($t_{n,m} = 1/p^{M_{n,m}}$). Then:
    \begin{equation}
    \label{eq:Fn_phi_approximation}
    |\phi(z)-F_n(z)|_p\leq\frac{1}{p^{M_{n,m}}}\quad\text{for all }z\in \BZP_{1/p^n}(m).
    \end{equation}
\end{proposition}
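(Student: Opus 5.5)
The plan is to place both $\phi(z)$ and $F_n(z)$ inside one closed ball of radius $t_{n,m}$ and then conclude with the ultrametric inequality. The ball is the canonical target $\BCP_{t_{n,m}}(\beta_{n,m})$ of Definition~\ref{def:canonical_target}. Under the standing assumption it may be written about $\phi(m)$, since $\phi(m)$ lies in it by Remark~\ref{rem:target_contains_images}.

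First, $F_n(z)$ lies in the target ball. Take $z\in\BZP_{1/p^n}(m)$. Then $z\in\BCP_{1/p^n}(m)$, and Definition~\ref{def:nth_approximation} gives $F_n(\BCP_{1/p^n}(m))=\BCP_{t_{n,m}}(\beta_{n,m})$. Hence $|F_n(z)-\beta_{n,m}|_p\le t_{n,m}$.

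Second, $\phi(z)$ lies in the same ball. The point $z\in\mathbb{Z}_p$ belongs to exactly one configuration ball $\BZP(a)$, by the decomposition $\mathbb{Z}_p=\bigsqcup_a\BZP(a)$. Because $z\equiv m\pmod{p^n}$ and $n\le N$, the configuration satisfies $\operatorname{trunc}_n(a)=m$. Exactness then gives $\phi(z)\in\BCP(f(a))=\BCP_{p^{-N}}(m_{f(a)})$. Remark~\ref{rem:target_contains_images} gives $|m_{f(a)}-\beta_{n,m}|_p\le t_{n,m}$, and $p^{-N}\le p^{-M_{n,m}}=t_{n,m}$ because $M_{n,m}\le N$. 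By the strong triangle inequality, $|\phi(z)-\beta_{n,m}|_p\le\max\{p^{-N},t_{n,m}\}=t_{n,m}$.

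Combining the two steps gives $|\phi(z)-F_n(z)|_p\le\max\{|\phi(z)-\beta_{n,m}|_p,|F_n(z)-\beta_{n,m}|_p\}\le t_{n,m}=p^{-M_{n,m}}$, which is \eqref{eq:Fn_phi_approximation}.

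The only step needing care is identifying the configuration $a$ with $z\in\BZP(a)$ and checking $\operatorname{trunc}_n(a)=m$. This is where the restriction to observable points is used: for a general point of $\BCP_{1/p^n}(m)\subset\C_p$ there may be no configuration ball containing it, so the bound is claimed only on $\BZP_{1/p^n}(m)$. Exactness is also not essential. The argument uses only the inclusion $\phi(\BCP(a))\subseteq\BCP(f(a))$, so it holds for any interpreter, as the paper notes for the generic case.
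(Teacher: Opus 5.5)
Your proof is correct and follows the paper's argument. Both proofs place $F_n(z)$ and $\phi(z)$ in the canonical target ball of radius $t_{n,m}$ and conclude by the strong triangle inequality; the paper centers that ball at $\phi(m)$ and you center it at $\beta_{n,m}$, which is the same ball. Your explicit checks that $z$ lies in a micro-ball $\BZP(a)$ with $\operatorname{trunc}_n(a)=m$, and that only the inclusion $\phi(\BCP(a))\subseteq\BCP(f(a))$ is used, spell out what the paper's short proof leaves implicit.
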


\begin{proof}
Both $F_n(z)$ and $\phi(z)$ lie within $1/p^{M_{n,m}}$ of $\phi(m)$: the first because $F_n$ maps the ball onto $\BCP_{1/p^{M_{n,m}}}(\phi(m))$, the second because every micro-ball $\BCP(a)$ inside $\BZP_{1/p^n}(m)$ is carried into $\BCP(f(a))$, which sits in that same target ball by Remark~\ref{rem:target_contains_images}. The strong triangle inequality then gives the bound, as required.
\end{proof}

\begin{proposition}
\label{prop:perturbation}
    Let $F_n\in\C_p(z)$ be an $n$-th approximation. Put
    \[
	    \hat{M}=\max_{0\leq m\leq p^n-1} M_{n,m},
	    \]
    so $p^{-\hat M}=\min_m p^{-M_{n,m}}$ is the smallest image radius among the level-$n$ balls. Let $h\in\C_p(z)$ be a rational function such that:
    \begin{equation}
    \label{eq:perturbation_bound}
    \sup_{z\in\bigsqcup_{m=0}^{p^n-1}\BCP_{1/p^n}(m)}|h(z)|_p<\frac{1}{p^{\hat{M}}}.
    \end{equation}
    Then $F'_n:=F_n+h\in\C_p(z)$ is also an $n$-th approximation.
\end{proposition}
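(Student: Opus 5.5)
The plan is to apply Lemma~\ref{lem:image_ball_perturbation} ball by ball. Fix $m\in\{0,\dots,p^n-1\}$ and set $g:=F_n$, with perturbation $h$, on the rational closed ball $\BCP_{1/p^n}(m)$; its radius $p^{-n}$ lies in $|\C_p^\times|=p^{\mathbb{Q}}$. The lemma needs three inputs on this ball. First, $g$ has no pole there. Second, $h$ has no pole there. Third, $g(\BCP_{1/p^n}(m))=\BCP_{t_{n,m}}(\beta_{n,m})$ and $\sup|h|_p<t_{n,m}$. The image equality is exactly Definition~\ref{def:nth_approximation}. The sup bound follows from \eqref{eq:perturbation_bound}, because the union in that hypothesis contains the ball and $p^{-\hat M}=\min_{m'}t_{n,m'}\le t_{n,m}$. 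The lemma then gives $(F_n+h)(\BCP_{1/p^n}(m))=\BCP_{t_{n,m}}(\beta_{n,m})$. Since $m$ was arbitrary, this is Definition~\ref{def:nth_approximation} for $F_n'$.

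For the pole conditions, consider $F_n$ first. Its image of the source ball is a bounded ball of $\C_p$, so it cannot contain $\infty$; hence $F_n$ has no pole on the ball. This is the same argument used at the end of the proof of Theorem~\ref{thm:existence_approximations}. For $h$, the hypothesis \eqref{eq:perturbation_bound} says the supremum of $|h|_p$ over the ball is finite. A rational function with a pole at some point $z_0$ of the ball has $|h(z)|_p\to\infty$ as $z\to z_0$ inside the ball, since points of $\C_p$ accumulate at $z_0$ there. So $h$ has no pole on the ball. I read the supremum in \eqref{eq:perturbation_bound} as taken over the analytic balls $\BCP_{1/p^n}(m)$, as written, and not over their observable traces; this is what makes the pole exclusion legitimate.

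I expect the main obstacle to be the choice of centers, which is also where the statement needs care. The lemma returns the target centered at $\beta_{n,m}$, and the proof of the lemma only needs $(F_n+h)(m)$ to lie within $t_{n,m}$ of $\beta_{n,m}$. That follows from $|h(m)|_p<t_{n,m}$. No centering data beyond Definition~\ref{def:nth_approximation} enters. The strict inequality in \eqref{eq:perturbation_bound} is essential: the lemma requires $q<t$ to keep the leading coefficient attaining the radius. The uniform threshold $p^{-\hat M}$ is there so that a single bound serves every ball, including the one with the smallest image radius. If desired, I would also note that the bijectivity clause of Lemma~\ref{lem:image_ball_perturbation} carries over the injectivity of Remark~\ref{rem:existence_uniformity} to $F_n'$.
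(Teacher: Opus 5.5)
Your proposal is correct and follows the paper's proof. In both, you fix $m$, note that $\sup|h|_p<p^{-\hat M}\le t_{n,m}$ on $\BCP_{1/p^n}(m)$, and let Lemma~\ref{lem:image_ball_perturbation} preserve the image ball $\BCP_{t_{n,m}}(\beta_{n,m})$. Your explicit check that neither $F_n$ nor $h$ has a pole on the source ball is a sound addition, since the paper's one-line proof leaves it implicit.
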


\begin{proof}
Fix $m$. By~\eqref{eq:perturbation_bound} the sup norm of $h$ on $\BCP_{1/p^n}(m)$ is below $1/p^{\hat M}\le 1/p^{M_{n,m}}$, the image radius there, so Lemma~\ref{lem:image_ball_perturbation} leaves the image ball unchanged.
\end{proof}

\subsection{Classification of Ball-Level Dynamics}
\label{subsec:quasi_dynamics}

\begin{definition}
\label{def:quasi_dynamics}
    Let $f:\mathcal{C}\to\mathcal{C}$ be a dynamical system, let $\mathcal{U} = \bigcup_{a\in\mathcal{C}}\BCP(a)$ be the affinoid domain determined by the embedding (independently of $f$), and let $t_{n,m}$ be as in~\eqref{eq:tnm_definition}. For $m\in\{0,\ldots,p^n-1\}$, the \emph{coarse multiplier} of the ball $\BCP_{1/p^n}(m)$ is the ratio of image radius to domain radius,
    \[
        \Lambda_{n,m} \;:=\; \frac{t_{n,m}}{1/p^n} \;=\; p^n\, t_{n,m} \;=\; p^{\,n-M_{n,m}},
    \]
    the non-Archimedean ball-level analogue of the classical multiplier $|\phi'(x_0)|_p$ at a fixed point (the comparison is made precise in Section~\ref{sec:stability_measure}). The ratio records the radius of the image ball relative to that of its source, under a map that may move the ball to a different branch of the hierarchy. It is not the local degree of a fixed point of Type II, which is a positive integer and therefore admits no attracting case~\cite[Warning~8.5]{benedetto2019dynamics}. The three cases below classify the ball by that radius comparison alone, and say nothing about where the image sits: the nesting that turns them into trapping, covering or isometric self-dynamics is the subject of Remarks~\ref{rem:quasi_Zp} and~\ref{rem:fixed_points}. We say that the dynamics of $f$ in $\BCP_{1/p^n}(m)$ is
    \begin{itemize}
        \item \emph{$n$-contracting} if $\Lambda_{n,m} < 1$ (equivalently $t_{n,m} < 1/p^n$);
        \item \emph{$n$-expanding} if $\Lambda_{n,m} > 1$ (equivalently $t_{n,m} > 1/p^n$);
        \item \emph{$n$-isometric} if $\Lambda_{n,m} = 1$ (equivalently $t_{n,m} = 1/p^n$).
    \end{itemize}
\end{definition}

The value $t_{n,m}$ captures the dynamical spread within $B_{1/p^n}(m)$: if all configurations in the ball map to the same configuration, $t_{n,m} = 1/p^N$ (minimal spread). If they map to different configurations, $t_{n,m} > 1/p^N$. The classification depends on this spread, and refinement from level $n$ to level $n+1$ can reveal sub-balls with different types. Figure~\ref{fig:grn_to_padic_pipeline} illustrates the passage from a finite Boolean transition table to the induced $p$-adic ball partition and its ball-level labels.

\begin{definition}[The $A/E/I$ word of a configuration]
\label{def:aei_word}
Fix an ordering of the coordinates, written $\pi$ and formalized in Section~\ref{sec:stability_measure}, and a configuration $a\in\mathcal{C}$. For each level $1\le n\le N-1$ the configuration lies in exactly one ball $B_{n,m}$ of that level, and Definition~\ref{def:quasi_dynamics} assigns that ball one of the three types. Reading them in order of increasing $n$ gives the \emph{$A/E/I$ word} of $a$, a string of length $N-1$ over the alphabet $\{A,E,I\}$, where $A$ records $n$-contracting, $E$ records $n$-expanding and $I$ records $n$-isometric.
\end{definition}

\noindent The word is the scale-resolved signature of the configuration under $\pi$, and it is finer than the fixed-point equation and than any label read at a single resolution: two configurations with $f(a)=a$ are alike as solutions of that equation, while their words may differ from the first level at which their balls separate. Section~\ref{sec:athaliana} exhibits a map whose fixed points split into two classes that agree at the first two resolutions and part in their subsequent $A/E/I$ profiles.

\begin{figure}[htbp]
\centering
\includegraphics[width=\textwidth]{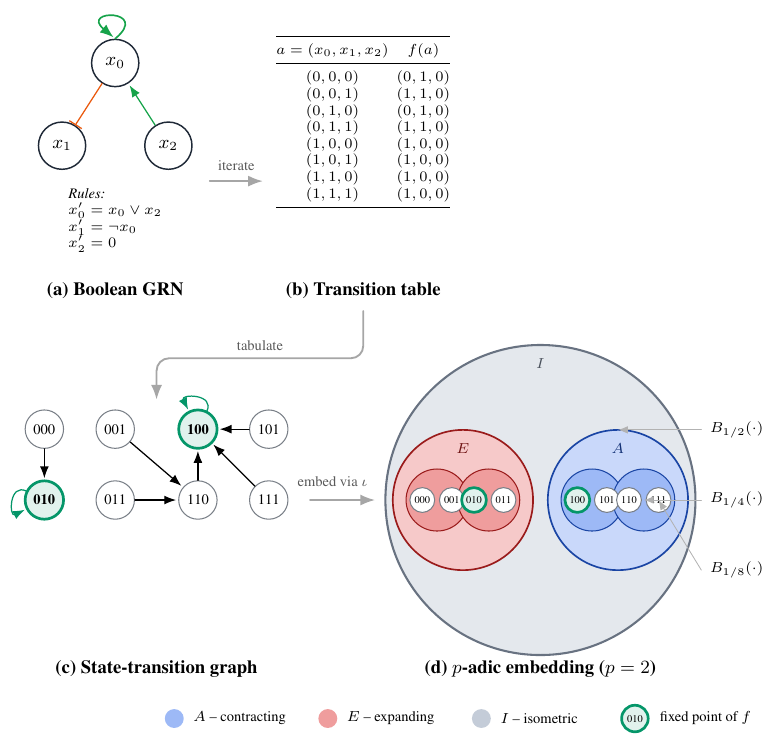}
\caption{From a Boolean network to a $p$-adic ball partition with ball-level labels ($p=2$, $N=3$). The first panels show a small Boolean network, its transition table, and its state-transition graph. The last panel shows the induced partition of $\mathbb{Z}_2$ into eight leaf-balls. The colors record the $A/E/I$ classification of Definition~\ref{def:quasi_dynamics}.}
\label{fig:grn_to_padic_pipeline}
\end{figure}

Whenever the image ball lies in the same ball chain as $\BCP_{1/p^n}(m)$, the classification of Definition~\ref{def:quasi_dynamics} takes the set-inclusion form $F_n(\BCP_{1/p^n}(m)) \subsetneq \BCP_{1/p^n}(m)$ for contracting, $\supsetneq$ for expanding, and $=$ for isometric.

\begin{remark}[Invariance of the classification]
\label{cor:invariance_Cn}
Two $n$-th approximations $F_n$ and $F'_n=F_n+h$ have the same ball images, $F'_n(\BCP_{1/p^n}(m))=F_n(\BCP_{1/p^n}(m))$ for every $m$ (Proposition~\ref{prop:perturbation}), so the $n$-contracting/expanding/isometric classification, and with it the scores of Section~\ref{sec:stability_measure}, survives any change of approximant.
\end{remark}

\begin{remark}[Ball-Level Classification and $\mathbb{Z}_p$]
\label{rem:quasi_Zp}
The classification of Definition~\ref{def:quasi_dynamics} is read on the observable balls $\BCP_{1/p^n}(m) \cap \mathbb{Z}_p$, through the observable image of $F_n$. A fixed point forced by a contraction on a $\mathbb{C}_p$-ball may fail to be observable, while the ball-level label, formulated on the observable image, stays unambiguous. For fixed points both the ``$n$-contracting'' and the ``$n$-expanding'' labels need one further hypothesis, since in either case the image ball may be disjoint from the source, and on the network of Section~\ref{sec:athaliana} it usually is. Once the image meets the source, ultrametricity nests the two. In the contracting case the image lies inside the source and Lemma~\ref{lem:closed_ball_bridge}(a) supplies an attracting fixed point; in the expanding case the source lies inside its own image and Lemma~\ref{lem:closed_ball_bridge}(b) supplies a repelling one with multiplier of absolute value $\Lambda_{n,m}>1$.

The label is indexed by the observable source ball $B_{n,m}=\BCP_{1/p^n}(m)\cap\mathbb{Z}_p$, while the radius comparison that defines it is made on the analytic image $F_n(\BCP_{1/p^n}(m))=\BCP_{t_{n,m}}(\beta_{n,m})\subset\mathbb{C}_p$. Since $\beta_{n,m}\in\mathbb{Z}_p$, that target ball has a nonempty observable trace of the same radius. No equality between the pointwise image $F_n(B_{n,m})$ and that trace is asserted or needed.
\end{remark}

\begin{remark}[$n$-trapping balls, $n$-covering balls, and the isometric case]
\label{rem:fixed_points}
Call an $n$-contracting ball that contains its own image, $F_n(\BCP_{1/p^n}(m)) \subsetneq \BCP_{1/p^n}(m)$, an \emph{$n$-trapping ball}, and an $n$-expanding ball contained in its own image an \emph{$n$-covering ball}. For the first, the gluing construction keeps the poles of $F_n$ outside the ball. Write the source ball about $\beta_{n,m}$, which is legitimate because that center lies in the source ball and every point of a non-Archimedean ball is a center. The hypothesis of Lemma~\ref{lem:closed_ball_bridge}(a) then holds at $\beta_{n,m}$ with $r'=t_{n,m}<p^{-n}$, and that lemma gives a unique attracting fixed point. For the second, injectivity on the closed source ball (established in the last step of the proof of Theorem~\ref{thm:existence_approximations}, recorded in Remark~\ref{rem:existence_uniformity}, and preserved under the perturbations of Proposition~\ref{prop:perturbation}, all by~Lemma~\ref{lem:image_ball_perturbation}) puts us in Lemma~\ref{lem:closed_ball_bridge}(b) with $t=t_{n,m}>r=p^{-n}$, which gives a unique repelling fixed point with multiplier of absolute value $t/r=\Lambda_{n,m}>1$. These are the resolution-$n$ analogues of attracting and repelling behavior, and in both cases Lemma~\ref{lem:closed_ball_bridge} gives the multiplier exactly, $|F_n'(b)|_p=\Lambda_{n,m}$, the constructed approximants being bijective from the source ball onto their image ball, which is what part~(b) of that lemma needs. Moreover, in the trapping case every point of the source ball converges to that fixed point under iteration of $F_n$, by Lemma~\ref{lem:closed_ball_bridge}(a). The trapping ball is therefore a trapping neighborhood contained in the analytic basin of its fixed point, and is the whole basin only for the dynamics restricted to it. A small image radius alone does not suffice: an $n$-contracting ball whose image lies in a \emph{different} ball carries no fixed point and feeds contractively into another basin. Neither the fixed points nor their type enter the scores, which are combinatorial. The fixed points are those of the analytic map $F_n$ in $\mathbb{C}_p$ and need not be observable (Remark~\ref{rem:quasi_Zp}), and when the ball chain contains a fixed configuration of $f$, that configuration labels an observable microball inside the canonical target ball $\BCP_{t_{n,m}}(\beta_{n,m})$, which is where the attracting fixed point lies. The point need not sit in that particular microball unless $M_{n,m}=N$, in which case the target ball is a single microball and the two agree.

The isometric case differs in one respect. If an $n$-isometric ball is carried onto itself, Lemma~\ref{lem:closed_ball_bridge}(b) with $t=r$ makes $F_n$ an isometry of it, so any fixed point is indifferent, with $|F_n'(x_0)|_p=1=\Lambda_{n,m}$. Existence is not forced: the attracting and repelling conclusions above both rest on a strict nesting of the source ball and its image, and here the two coincide.
\end{remark}

\begin{remark}[Scope and dependence on the coordinate ordering]
\label{rem:scope_analysis}
\label{Rem:order}
The scores and the optimal-ordering statements are for a fixed resolution $N$ and a fixed map $f$. Under that convention the ball partition is independent of the choice of approximants (Remark~\ref{cor:invariance_Cn}), so everything downstream depends on the coordinate ordering alone.
\end{remark}

%% file: 05_toy_example.tex
\section{Illustrative Example: A Toy Model}
\label{sec:toy_example}

We illustrate the framework on the smallest instance rich enough to show the construction in full, $p=2$, $N=4$ (coordinates $x_0,x_1,x_2,x_3$, so the configuration space $\mathcal{C}$ has $16$ elements, with $(a_0,a_1,a_2,a_3)$ encoded as $m = a_0 + 2a_1 + 4a_2 + 8a_3$). Interpolated over $\mathbb{F}_2$, the component updates are the multilinear polynomials
\begin{equation}
\label{eq:toy_polys}
x_0' = x_0, \qquad x_1' = 0, \qquad x_2' = x_1 + x_0 x_1, \qquad x_3' = x_0 x_1 + x_0 x_2,
\end{equation}
equivalently $x_2' = \neg x_0 \wedge x_1$ and $x_3' = x_0 \wedge (x_1 \oplus x_2)$ in Boolean form. Read variable by variable. The coordinate $x_0$ is a self-latch ($x_0'=x_0$), so its value is frozen from the first step. The coordinate $x_1$ is switched off ($x_1'=0$), and $x_2,x_3$ are driven by $x_0,x_1$. The eventual fate of every trajectory is therefore governed by the initial value of $x_0$, while the transient still reads $x_1$.

Table~\ref{tab:toy_transitions_main} lists the map in full. There are two fixed points, $(0,0,0,0)$ and $(1,0,0,0)$ (integers $0$ and $1$), one for each value of the latch $x_0$. Their basins partition the $16$ states into two sets of eight, and every trajectory reaches its fixed point in at most two synchronous steps (through the intermediate states $m=4$ or $m=9$).

\begin{table}[htbp]
\centering
\caption{Transition table of the toy model, $f\colon m\mapsto m'$. Here $m'=f(m)$ is the image and $m^{\ast}=\lim_k f^{k}(m)$ is the fixed point the state reaches under iteration, attained after at most two steps. The latch $x_0$ (least significant bit) is preserved by $f$, so the two columns are exactly the two basins.}
\label{tab:toy_transitions_main}
\small
\begin{tabular}{@{}cccc@{\qquad}cccc@{}}
\toprule
\multicolumn{4}{c}{Basin of $0$ ($x_0=0$)} & \multicolumn{4}{c}{Basin of $1$ ($x_0=1$)}\\
\cmidrule(r{1.5em}){1-4}\cmidrule{5-8}
$m$ & $(x_0x_1x_2x_3)$ & $m'$ & $m^{\ast}$ & $m$ & $(x_0x_1x_2x_3)$ & $m'$ & $m^{\ast}$\\
\midrule
$0$  & $0000$ & $0$ & $0$ & $1$  & $1000$ & $1$ & $1$\\
$2$  & $0100$ & $4$ & $0$ & $3$  & $1100$ & $9$ & $1$\\
$4$  & $0010$ & $0$ & $0$ & $5$  & $1010$ & $9$ & $1$\\
$6$  & $0110$ & $4$ & $0$ & $7$  & $1110$ & $1$ & $1$\\
$8$  & $0001$ & $0$ & $0$ & $9$  & $1001$ & $1$ & $1$\\
$10$ & $0101$ & $4$ & $0$ & $11$ & $1101$ & $9$ & $1$\\
$12$ & $0011$ & $0$ & $0$ & $13$ & $1011$ & $9$ & $1$\\
$14$ & $0111$ & $4$ & $0$ & $15$ & $1111$ & $1$ & $1$\\
\bottomrule
\end{tabular}
\end{table}

One exponent computed in full shows the mechanism. At level $n=2$ the ball $m=1$ collects the configurations whose first two ordered digits read $(1,0)$, that is the integer centers $1,5,9,13$. Their images have centers $1,9,1,9$. These agree modulo $2^{3}$ and not modulo $2^{4}$, so
\[
M_{2,1}=3,\qquad t_{2,1}=2^{-3},\qquad \Lambda_{2,1}=\frac{t_{2,1}}{2^{-2}}=\tfrac12<1,
\]
and the ball is contracting. Every other entry of Table~\ref{tab:toy_hierarchical_data} is obtained the same way, by reading a column of images off Table~\ref{tab:toy_transitions_main} and finding the longest common prefix.

Following Section~\ref{sec:hierarchical_framework}, the hierarchical approximations $F_0,\ldots,F_4$ exist. At every level $n=1,2,3$ the image radius of each source ball is smaller than the source radius, so every interior ball is $n$-contracting, while the level-$0$ ball is $0$-isometric. This toy is the clean, purely canalizing case, chosen so that the mechanics are transparent before the mixed behavior of the \textit{A.\ thaliana} network in Section~\ref{sec:athaliana}. Table~\ref{tab:toy_hierarchical_data} records the image balls level by level, and the classification is read off by comparing image radius with source radius. The updates~\eqref{eq:toy_polys} are triangular for the displayed ordering (Definition~\ref{def:triangular_compatible}), and Section~\ref{sec:structure_functional} returns to this map once the scores are available.

\begin{table}[htbp]
\centering
\caption{Hierarchical image data for the toy model. At level $n$ the source ball $\BCP_{1/2^{\,n}}(m)$ has radius $1/2^{\,n}$. The listed image ball has a strictly smaller radius at every entry, which is what makes each ball $n$-contracting ($A$).}
\label{tab:toy_hierarchical_data}
\small
\begin{tabular}{@{}clll@{}}
\toprule
Level $n$ & Source radius & Source balls & Image balls (all of smaller radius) \\
\midrule
$1$ & $1/2$  & $m=0,1$ &
$\BCP_{1/4}(0),\ \BCP_{1/8}(1)$ \\
$2$ & $1/4$  & $m=0,1,2,3$ &
$\BCP_{1/16}(0),\ \BCP_{1/8}(1),\ \BCP_{1/16}(4),\ \BCP_{1/8}(1)$ \\
$3$ & $1/8$  & $m=0,\ldots,7$ &
all radii $1/16$, centers $0,1,4,9,0,9,4,1$ \\
\bottomrule
\end{tabular}
\end{table}

Only the chains through the finest balls $\BCP_{1/16}(0)$ and $\BCP_{1/16}(1)$ contain their own image, and each therefore carries a unique attracting fixed point of the corresponding approximant. That fixed point is the analytic one supplied by Remark~\ref{rem:fixed_points}. At the finest level, where $M_{n,m}=N$, it lies in the analytic microball $\BCP_{1/16}(m_a)$ whose observable trace $\BZP_{1/16}(m_a)$ is labeled by the discrete fixed configuration. The fixed point itself need not equal that configuration and need not belong to $\mathbb{Z}_p$ (Remark~\ref{rem:quasi_Zp}). The dependency graph, the state-transition graph, and the image-ball data determining the approximants $F_0,\ldots,F_3$, with $F_0$ and $F_1$ in closed form, are given in the Supplementary Material.

%% file: 06_stability_measure.tex
\section{Hierarchical Scores and the Variational Problem}
\label{sec:stability_measure}

The coordinate ordering fixes the embedding $\iota_\pi\colon\mathcal{C}\to\mathbb{Z}_p/p^N\mathbb{Z}_p$ and therefore the hierarchy of balls on which the dynamics is observed (Section~\ref{subsec:embedding}). This section defines the scale-resolved scores $\mu_E,\mu_A,\mu_I$ and states the variational problem over orderings; Section~\ref{sec:structure_functional} then proves what the expansion score measures. The analytic role of Section~\ref{sec:hierarchical_framework} is essential: Theorem~\ref{thm:existence_approximations} realizes the finite transition map by rational approximants over $\mathbb{C}_p$ whose ball images have the prescribed radii. The computation of the radii, however, is purely finite.

\paragraph{Canonical combinatorics.}
\label{subsec:encoding_canonical}

Recall from Section~\ref{subsec:rational_approximations} that the ordering gives each configuration $a\in\mathcal{C}$ an integer center $m_a=\sum_{i=0}^{N-1}a_i p^i$, that $\operatorname{trunc}_n(a):=m_a\bmod p^n$, and that $M_{n,m}$ and $t_{n,m}=p^{-M_{n,m}}$ are defined by \eqref{eq:Mnm_definition}--\eqref{eq:tnm_definition}.

The coordinates of $\mathcal{C}=\mathbb{F}_p^N$ are labeled by $\{0,1,\dots,N-1\}$, as in~\eqref{eq:padic_encoding}, so that a configuration is a tuple $a=(a_0,\dots,a_{N-1})$. An \emph{ordering} is a permutation $\pi\in S_N$ of that index set, read as placing the coordinate $\pi(i)$ at the digit position $i$. It determines the integer center
\begin{equation}
\label{eq:mapi_definition}
m_a=\sum_{i=0}^{N-1}a_{\pi(i)}\,p^{\,i},
\end{equation}
which is \eqref{eq:padic_encoding} written with the ordering displayed, and through it the embedding $\iota_\pi$, the truncations $\operatorname{trunc}_n$, the exponents $M_{n,m}$ and every quantity built from them. We write $\mu_E(\pi)$, $\mu_A(\pi)$, $\mu_I(\pi)$ when the dependence on the ordering is the point, and drop the argument when a single ordering is fixed. Since $0\le a_{\pi(i)}\le p-1$, the base-$p$ digits of $m_a$ are exactly the values $a_{\pi(0)},\dots,a_{\pi(N-1)}$, so $\operatorname{trunc}_n(a)$ records the values of $a$ on the first $n$ coordinates of the ordering and nothing else. That last observation is what Section~\ref{sec:structure_functional} turns into a structure theorem.

We keep the notation of Definition~\ref{def:quasi_dynamics}: for $0\le n\le N-1$ and $m\in\{0,\ldots,p^n-1\}$,
\[
\Lambda_{n,m}=\frac{t_{n,m}}{p^{-n}}=p^{n-M_{n,m}}.
\]
Thus $M_{n,m}$ is the common prefix length of the images of all configurations lying in the level-$n$ ball indexed by $m$, $t_{n,m}$ is the prescribed image radius of an $n$-th approximant, and $\Lambda_{n,m}$ is the coarse multiplier of Definition~\ref{def:quasi_dynamics}.

At level $n$ we classify the $p^n$ balls by
\begin{equation}
\label{eq:EAI_sets}
E(n):=\{m:M_{n,m}<n\},\qquad
I(n):=\{m:M_{n,m}=n\},\qquad
A(n):=\{m:M_{n,m}>n\}.
\end{equation}
Equivalently, $E$ means $\Lambda_{n,m}>1$, $I$ means $\Lambda_{n,m}=1$, and $A$ means $\Lambda_{n,m}<1$. These letters are labels for the geometric ball type: expanding, isometric, and contracting. The attracting/repelling terminology is reserved for the same-ball-chain situation of Remark~\ref{rem:fixed_points}.

\paragraph{Scores.}
\label{subsec:definition_stability}

The weighted scores of an ordering are
\begin{equation}
\label{eq:mu_definition}
\mu_E(\pi):=\sum_{n=0}^{N-1}|E(n)|p^{N-n},\qquad
\mu_A(\pi):=\sum_{n=0}^{N-1}|A(n)|p^{N-n},\qquad
\mu_I(\pi):=\sum_{n=0}^{N-1}|I(n)|p^{N-n}.
\end{equation}
The primary score used for optimization is $\mu:=\mu_E$, the \emph{expansion functional}. The singleton level $n=N$ is left out, because each level-$N$ ball corresponds to a single configuration and the classification is vacuous there. Although the three scores share the range $0\le n\le N-1$, the root never contributes to $\mu_E$, since $E(0)=\emptyset$, so the total score budget is $N\,p^N$ while the expansion budget is $(N-1)\,p^N$. Remark~\ref{rem:range_endpoints} discusses both endpoints in detail.

The weight $p^{N-n}$ is canonical, and it is one number read in two ways. A level-$n$ cylinder contains exactly $p^{N-n}$ level-$N$ cylinders, and equivalently
\begin{equation}
\label{eq:weight_is_haar}
p^{\,N-n}=p^N\lambda_{\mathrm{H}}(B_{n,m})
\end{equation}
for the normalized Haar measure $\lambda_{\mathrm{H}}$ on $\mathbb{Z}_p$ recalled in Section~\ref{sec:setup}. Each expanding ball is therefore weighted by the share of the observable state space it stands for, and the factor $p^N$ is there only to keep the score an integer. A coarse ball weighs more than a fine one because it represents more configurations, but no scale is privileged in the aggregate: there are $p^n$ balls at level $n$, so every complete level carries the same maximal budget $p^np^{N-n}=p^N$. Section~\ref{sec:structure_functional} turns~\eqref{eq:weight_is_haar} into an integral representation of $\mu_E$ and reads off what the integrand counts.

\begin{lemma}[Level-wise partition]
\label{lem:level_partition}
For each $0\le n\le N-1$,
\[
|E(n)|+|A(n)|+|I(n)|=p^n.
\]
\end{lemma}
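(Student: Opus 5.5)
The plan is to show that, at a fixed level $n$, the three sets $E(n)$, $I(n)$, $A(n)$ of~\eqref{eq:EAI_sets} form a partition of the full index set $\{0,\dots,p^n-1\}$. Counting that index set then gives $p^n$.

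First, I will check that each index $m\in\{0,\dots,p^n-1\}$ has a well-defined exponent $M_{n,m}$. As Definition~\ref{def:canonical_target} notes, the residue class $m$ has exactly $p^{N-n}$ configurations extending it, because the first $n$ ordered digits are fixed and the remaining $N-n$ are free. So the level-$n$ ball indexed by $m$ is nonempty. The set in~\eqref{eq:Mnm_definition} always contains $k=0$ and is bounded by $N$, so its maximum exists. Hence $M_{n,m}$ is a well-defined integer in $\{0,\dots,N\}$.

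Second, I will apply trichotomy of the integers to the pair $M_{n,m}$ and $n$. Exactly one of $M_{n,m}<n$, $M_{n,m}=n$, $M_{n,m}>n$ holds. Therefore every $m$ lies in exactly one of $E(n)$, $I(n)$, $A(n)$, the three sets are pairwise disjoint, and their union is $\{0,\dots,p^n-1\}$.

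Finally, I take cardinalities: $|E(n)|+|A(n)|+|I(n)|=p^n$. The only point needing any care is the first step, namely that every index $m$ at level $n\le N-1$ really labels a nonempty ball, so that no index drops out of the classification. This follows from the bijectivity of the coarse embedding $\iota$ established in Section~\ref{subsec:embedding}. I do not expect a genuine obstacle anywhere in the argument.
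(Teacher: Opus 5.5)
Your proof is correct and matches the paper's own argument: the paper likewise notes that the level-$n$ indices $m\in\{0,\dots,p^n-1\}$ are exhausted by the three sets, because exactly one of $M_{n,m}<n$, $M_{n,m}=n$, $M_{n,m}>n$ holds for each $m$. Your extra check that $M_{n,m}$ is well defined (the ball is nonempty, $k=0$ is admissible, and the set is bounded by $N$) makes explicit what the paper takes for granted.
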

\begin{proof}
The level-$n$ balls partition $\mathbb{Z}_p$, and exactly one of $M_{n,m}<n$, $M_{n,m}=n$, or $M_{n,m}>n$ holds for each index $m$, which is the assertion.
\end{proof}

\begin{lemma}[Control identity]
\label{lem:mu_checksum}
The scores satisfy
\[
\mu_E+\mu_A+\mu_I=N\,p^N.
\]
\end{lemma}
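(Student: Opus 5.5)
The plan is to sum the three scores level by level and reduce everything to Lemma~\ref{lem:level_partition}. By the definitions in~\eqref{eq:mu_definition}, all three scores run over the same range $0\le n\le N-1$ and carry the same weight $p^{N-n}$. By linearity of finite sums we can therefore write
\[
\mu_E+\mu_A+\mu_I=\sum_{n=0}^{N-1}\bigl(|E(n)|+|A(n)|+|I(n)|\bigr)\,p^{N-n}.
\]

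Next we invoke Lemma~\ref{lem:level_partition}. The three sets $E(n)$, $A(n)$, $I(n)$ of~\eqref{eq:EAI_sets} are defined by the mutually exclusive and exhaustive conditions $M_{n,m}<n$, $M_{n,m}=n$ and $M_{n,m}>n$ on the index set $\{0,\dots,p^n-1\}$. Hence the bracket equals $p^n$ at each level, and the summand becomes $p^n\cdot p^{N-n}=p^N$. This is the observation already made after~\eqref{eq:weight_is_haar}: every complete level carries the same budget $p^N$.

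Summing the constant $p^N$ over the $N$ levels $n=0,\dots,N-1$ gives $N\,p^N$, which is the claim.

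There is no real obstacle here. The only point needing care is the bookkeeping of the index range. The root level $n=0$ is included in all three scores and contributes $p^N$ through $I(0)$ or $A(0)$, since $E(0)=\emptyset$ because $M_{0,0}\ge0$. The singleton level $n=N$ is excluded from all three. The count of $N$ levels is therefore correct and consistent across the three sums.
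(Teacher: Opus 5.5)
Your proposal is correct and matches the paper's proof: multiply the level-wise partition identity $|E(n)|+|A(n)|+|I(n)|=p^n$ by $p^{N-n}$ and sum over the $N$ levels $n=0,\dots,N-1$. Your remarks on the index range are also accurate: $E(0)=\emptyset$, and the singleton level $n=N$ is excluded from all three scores.
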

\begin{proof}
Sum the identity of Lemma~\ref{lem:level_partition} after multiplying the level-$n$ equality by $p^{N-n}$, over the $N$ levels $n=0,\ldots,N-1$.
\end{proof}

\begin{remark}[The two endpoints of the scale range]
\label{rem:range_endpoints}
The scores sum over the $N$ resolution levels $n=0,\ldots,N-1$, and the two endpoints behave differently. At the root, $n=0$, the single ball is all of $\mathbb{Z}_p$. It never expands, since $\operatorname{trunc}_0\circ f$ is constant, so $E(0)=\emptyset$ and both the expansion score and its minimizers are the same as under the narrower range $1\le n\le N-1$. Its class is isometric or contracting according to whether $M_{0,0}=0$ or $M_{0,0}>0$, and $M_{0,0}>0$ holds exactly when every image shares its leading ordered digit, that is, when the leading coordinate $\pi(0)$ has a constant transition. The root therefore contributes $p^N$ to $\mu_A$ when $\pi(0)$ is a constant coordinate and to $\mu_I$ otherwise, and we retain it. At the singleton level $n=N$, by contrast, each level-$N$ ball corresponds to a single configuration and $M_{N,m}=N$, so all $p^N$ balls are isometric for every $f$ and every $\pi$. That level would add the constant $p^N$ to $\mu_I$ alone, independently of the map and the ordering, so it distinguishes nothing and shifts no minimizer, and we exclude it. The widened range is what endows $\mu_A$ and $\mu_I$ with their root term. Theorem~\ref{thm:muE_zero_lipschitz} explains the asymmetry: the lift can fail to be $1$-Lipschitz exactly at the scales $1\le n\le N-1$, which is the range the expansion score sums over.
\end{remark}

\begin{remark}[Multi-objective trade-off]
\label{rem:tradeoff}
The fixed budget is a useful consistency check and has one substantive consequence: minimizing $\mu_E$ is equivalent to maximizing $\mu_A+\mu_I$, not to maximizing $\mu_A$ alone. Thus the $\mu_E$- and $\mu_A$-optimal orderings need not coincide. That comparison is treated separately in~\cite{perez2026padic_applications}.
\end{remark}

\paragraph{The same map at every value of the functional.}
Sixteen states are enough to see the whole range of $\mu_E$. Running the score over all $4!=24$ orderings of this one map gives values
\[
\mu_E\in\{0,\,8,\,16,\,20,\,24,\,28,\,32,\,36,\,40,\,44,\,48\},
\]
from the minimum $0$, attained at the two topological orders of its dependency digraph (Corollary~\ref{cor:min_zero_dag}), to $48=(N-1)2^{N}$, which is the saturation value of Theorem~\ref{thm:muE_saturation}(a). The maximum is attained at exactly two orderings, $(x_2,x_3,x_0,x_1)$ and $(x_3,x_2,x_0,x_1)$, where every ball expands at every interior level: $|E(1)|=2$, $|E(2)|=4$, $|E(3)|=8$, each level contributing its full budget $2^{N}=16$. Both place the two driven coordinates $x_2,x_3$ at the coarsest positions, so the first two nontrivial levels are formed without the latch $x_0$ that determines the eventual fixed point. The same finite map therefore realizes both extremes of the functional, and the variational problem over orderings is already nontrivial at $N=4$. The richer mix of contracting, expanding, and isometric behavior across scales appears in the \textit{A.\ thaliana} application of Section~\ref{sec:athaliana}.

\begin{remark}[Saturated dynamics]
\label{rem:saturated}
When $\mu_E=(N-1)p^N$ for every ordering, every ball expands at every interior level and the variational principle detects no exploitable hierarchy. Theorem~\ref{thm:muE_saturation}(b) characterizes the maps for which this happens, and the Supplementary Material identifies them among the elementary cellular automata.
\end{remark}

\paragraph{Intrinsic computability.}

Recall that $B_{n,m}=\BZP_{1/p^n}(m)\subset\mathbb{Z}_p$ is the observable ball of Section~\ref{sec:hierarchical_framework}.

\begin{proposition}[Intrinsic combinatorial data]
\label{thm:intrinsic_mu}
The quantities $M_{n,m}$, $t_{n,m}$ and $\Lambda_{n,m}$, the sets $E(n)$, $A(n)$ and $I(n)$, and the scores $\mu_E$, $\mu_A$ and $\mu_I$ are functions of the pair $(f,\iota)$ alone.
\end{proposition}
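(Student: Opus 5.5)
The plan is to unwind the definitions in the order they were introduced and check that no analytic object enters at any stage. The pair $(f,\iota)$ fixes, for each configuration $a\in\mathcal{C}$, the integer center $m_a$ of~\eqref{eq:padic_encoding} (equivalently~\eqref{eq:mapi_definition}). It also fixes the integer center $m_{f(a)}$ of its image. Hence it fixes the truncations $\operatorname{trunc}_n(a)=m_a\bmod p^n$ for $0\le n\le N$. For each level $n$ and each index $m\in\{0,\dots,p^n-1\}$, the set of configurations with $\operatorname{trunc}_n(a)=m$ is therefore a finite subset of $\mathcal{C}$ determined by $\iota$ alone.

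The exponent $M_{n,m}$ of~\eqref{eq:Mnm_definition} is a maximum over the finite set $\{0,\dots,N\}$. It is subject to congruences among the integers $m_{f(a)}$ for $a$ in that subset, so it is a function of $(f,\iota)$. Remark~\ref{rem:tnm_computation} already records this as an explicit algorithm on the transition table. Then $t_{n,m}=p^{-M_{n,m}}$ and $\Lambda_{n,m}=p^{\,n-M_{n,m}}$ are obtained by composing with fixed functions of $n$ and $M_{n,m}$. The sets $E(n),A(n),I(n)$ of~\eqref{eq:EAI_sets} are defined by comparing $M_{n,m}$ with $n$. Finally, the scores~\eqref{eq:mu_definition} are finite sums of $|E(n)|p^{N-n}$, and likewise for $A$ and $I$. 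Each stage depends only on the previous one, so the whole chain is a function of $(f,\iota)$.

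The one point that needs an explicit sentence, and the only place I expect any friction, is showing independence from the analytic choices made in Section~\ref{sec:hierarchical_framework}. These are the interpreter $\phi$, the approximants $F_n$, the lift $\beta_{n,m}$ and the parameter $\varepsilon$. I would argue this by observing that none of them occurs in~\eqref{eq:Mnm_definition}, and so none occurs downstream. The analytic side is instead \emph{constrained} by these numbers: Definition~\ref{def:nth_approximation} prescribes the target ball from $t_{n,m}$ and the residue class $\bar\beta_{n,m}$, not conversely. For consistency I would also cite Remark~\ref{cor:invariance_Cn} and Proposition~\ref{prop:perturbation}. They show that any two admissible approximants have the same image radii, so the classification read through $F_n$ agrees with the combinatorial one. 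For a non-exact interpreter, the analytic image may be strictly smaller (Remark~\ref{rem:tnm_computation}), but $M_{n,m}$ is still defined from the table, so nothing changes.
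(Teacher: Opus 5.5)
Your proposal is correct and follows the paper's own argument: $M_{n,m}$ is read off~\eqref{eq:Mnm_definition} as the common-prefix length of the image centers $m_{f(a)}$ over the fiber $\operatorname{trunc}_n(a)=m$, and everything else is an explicit function of the $M_{n,m}$ via~\eqref{eq:tnm_definition}--\eqref{eq:mu_definition}. Your extra paragraph on independence from $\phi$, $F_n$, $\beta_{n,m}$ and $\varepsilon$ is harmless but not needed. None of these objects appears in the definitions, and the fact that all admissible approximants share the same image radii already follows directly from Definition~\ref{def:nth_approximation}.
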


\begin{proof}
By~\eqref{eq:Mnm_definition}, $M_{n,m}$ is the common prefix length of the image centers
\[
\{\operatorname{trunc}_N(f(a))\;:\;\operatorname{trunc}_n(a)=m\},
\]
a set read off from $f$ and from the ordering. It follows from~\eqref{eq:tnm_definition}--\eqref{eq:mu_definition} that the remaining quantities are explicit functions of the $M_{n,m}$.
\end{proof}

\noindent Nothing analytic enters that statement. What the analytic layer adds is the next proposition, where the two meet: these numbers are the radii that rational maps realize on the balls of the hierarchy. It is the one place in the paper where the finite computation and the non-Archimedean geometry have to agree, and they do.

\begin{proposition}[The canonical radius is attained]
\label{prop:radius_attained}
For every interpreter $\phi$ and every $n$-th approximant $F_n$,
\begin{equation}
\label{eq:tnm_image_radius_bound}
\diam\!\bigl(\phi(B_{n,m})\bigr)\le t_{n,m},\qquad
\diam\!\bigl(F_n(B_{n,m})\bigr)\le t_{n,m}.
\end{equation}
The first bound is an equality for every interpreter whenever $M_{n,m}<N$, and for the exact interpreters of Proposition~\ref{prop:exact_existence} at every $(n,m)$. The second is an equality for the approximants constructed in Theorem~\ref{thm:existence_approximations}.
\end{proposition}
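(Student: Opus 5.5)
The plan has three parts: the two upper bounds, the equalities for interpreters, and the equality for the constructed approximants.

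\emph{Upper bounds.} For an interpreter $\phi$ I use the dichotomy of Remark~\ref{rem:tnm_computation}. Every $x\in B_{n,m}$ lies in some microball $\BCP(a)$ with $\operatorname{trunc}_n(a)=m$. Hence $\phi(x)\in\BCP(f(a))$. By Remark~\ref{rem:target_contains_images}, $\BCP(f(a))=\BCP_{1/p^N}(m_{f(a)})\subseteq\BCP_{t_{n,m}}(\beta_{n,m})$, because $t_{n,m}\ge p^{-N}$ and $m_{f(a)}$ lies in the target ball. So $\phi(B_{n,m})$ lies in a closed ball of radius $t_{n,m}$, and the strong triangle inequality bounds the diameter by $t_{n,m}$. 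For $F_n$ the bound is immediate from Definition~\ref{def:nth_approximation}: $F_n(B_{n,m})\subseteq F_n(\BCP_{1/p^n}(m))=\BCP_{t_{n,m}}(\beta_{n,m})$.

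\emph{Equality for interpreters.} Suppose first $M_{n,m}<N$. By the maximality in \eqref{eq:Mnm_definition} there are configurations $a,b$ in the level-$n$ ball whose image centers agree mod $p^{M}$ but not mod $p^{M+1}$, where $M=M_{n,m}$. Thus $|m_{f(a)}-m_{f(b)}|_p=p^{-M}>p^{-N}$. The microballs $\BCP(f(a))$ and $\BCP(f(b))$ are then disjoint. By the quoted ultrametric fact, $|\phi(m_a)-\phi(m_b)|_p=p^{-M}=t_{n,m}$, and both $m_a,m_b\in B_{n,m}$. So equality holds for every interpreter, including generic ones. If $M_{n,m}=N$, then $t_{n,m}=p^{-N}$ and we need exactness. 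For an exact interpreter, $\phi(\BCP(a))=\BCP(f(a))$ for one $a$ in the ball, and I want a pair of observable points of $\BZP(a)$ whose images are at distance exactly $p^{-N}$. Here I would use the bijectivity from the proof of Proposition~\ref{prop:exact_existence} together with the scaling identity of Lemma~\ref{lem:closed_ball_bridge}(b) with $t=r=p^{-N}$. This makes $\phi$ an isometry on $\BCP(a)$, so $m_a$ and $m_a+p^N$ map to points at distance exactly $p^{-N}$.

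\emph{Equality for the constructed approximants, and the main obstacle.} The approximants of Theorem~\ref{thm:existence_approximations} are bijective from $\BCP_{1/p^n}(m)$ onto $\BCP_{t_{n,m}}(\beta_{n,m})$ (Remark~\ref{rem:existence_uniformity}). Recentering at $\beta_{n,m}$ as in the discussion before Lemma~\ref{lem:closed_ball_bridge}, part (b) of that lemma gives $|F_n(x)-F_n(y)|_p=\Lambda_{n,m}|x-y|_p$ on the ball. Taking $x=m$ and $y=m+p^n$, both in $B_{n,m}$ and at distance $p^{-n}$, gives image distance $t_{n,m}$. The delicate point is the hypothesis of Lemma~\ref{lem:closed_ball_bridge}(b), which asks for the image ball to be written about the same center as the source. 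Rather than invoking the lemma literally, I expect to rerun its first paragraph directly: it shows that Weierstrass degree $1$ forces the linear term to dominate. That argument uses only bijectivity onto some ball of radius $t$, not the common center, and it yields the scaling identity with factor $t_{n,m}/p^{-n}$. This same remark also covers the exact-interpreter case above.
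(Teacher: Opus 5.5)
Your proof is correct. The upper bounds and the case $M_{n,m}<N$ follow the paper. Your containment of $\phi(B_{n,m})$ in the single ball $\BCP_{t_{n,m}}(\beta_{n,m})$ is a slightly tidier route to the bound than the paper's pairwise case split.

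For the two remaining equalities you argue differently. The paper uses neither bijectivity nor Weierstrass degree there. It writes each map on the source ball as an affine term plus a perturbation whose sup norm is read off the construction:
\begin{itemize}
\item for the interpreter, $\phi=g_a+(\phi-g_a)$, with $g_a$ an isometry and $\sup_{\BCP(a)}|\phi-g_a|_p<\varepsilon<p^{-N}$;
\item for the approximant, $F_n=A_m(z-m)+\beta_{n,m}+h_m$, with $|A_m|_p=\Lambda_{n,m}$ and $\sup|h_m|_p<t_{n,m}$. Here $h_m$ collects the Case~B geometric tail, of sup norm at most $t_{n,m}/p$, and $F_n-\hat g_m$, of sup norm below $\varepsilon$.
\end{itemize}
At the single pair $m_a,m_a+p^N$, respectively $m,m+p^n$, the strong triangle inequality then leaves the affine term in charge.

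You instead start from bijectivity, which was itself obtained from those same sup bounds through Lemma~\ref{lem:image_ball_perturbation}. You convert it into Weierstrass degree $1$ by the multiplicity argument and conclude the scaling identity on the whole ball. This costs \cite[Theorem~3.15]{benedetto2019dynamics} where the paper needs only an ultrametric estimate. In exchange you get more: every pair at distance $p^{-n}$ realizes the diameter. The equality also holds for any $n$-th approximant injective on each source ball, and any exact interpreter injective on each microball, however they were built. The paper's argument, by contrast, leans on the explicit decomposition.

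Drop the sentence about recentering at $\beta_{n,m}$. Recentering is legitimate only when the image ball and the source ball are nested. When the image ball is disjoint from the source, which is the common case (cf.\ Remark~\ref{rem:quasi_Zp}), no choice of centers makes the two balls concentric. So Lemma~\ref{lem:closed_ball_bridge}(b) cannot be applied as stated, either to $F_n$ or to $\phi$ unless $f(a)=a$.

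Your fallback is what actually carries the argument, and it is sound. The first paragraph of that proof uses the image ball only through its radius, and in $\mathbb{C}_p$ the radius is an invariant of the set. You should also state explicitly that neither map has a pole on the closed source ball, since the power-series expansion needs it. This follows from the image equality, because a pole would put $\infty$ inside a bounded ball.
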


\begin{proof}
For the bounds, if $x\in\BCP(a)\cap\mathbb{Z}_p$ and $y\in\BCP(b)\cap\mathbb{Z}_p$ with $\operatorname{trunc}_n(a)=\operatorname{trunc}_n(b)=m$, then an interpreter sends $x$ into $\BCP(f(a))$ and $y$ into $\BCP(f(b))$. When $f(a)=f(b)$, the two images lie in one ball of radius $p^{-N}$, so their distance is at most $p^{-N}$. When $f(a)\ne f(b)$, disjointness of the target micro-balls gives
\[
|\phi(x)-\phi(y)|_p=|m_{f(a)}-m_{f(b)}|_p=p^{-j},
\]
where $j$ is the first digit at which the two image centers differ. Taking the supremum over all such pairs gives at most $p^{-M_{n,m}}=t_{n,m}$. For $F_n$ the bound is immediate from Definition~\ref{def:nth_approximation}, since $F_n(B_{n,m})\subseteq F_n(\BCP_{1/p^n}(m))=\BCP_{t_{n,m}}(\beta_{n,m})$.

We turn to the equality. Suppose first $M_{n,m}<N$. By maximality in~\eqref{eq:Mnm_definition} there are $a,b$ in the ball with $m_{f(a)}\equiv m_{f(b)}\pmod{p^{M_{n,m}}}$ but not modulo $p^{M_{n,m}+1}$, so $f(a)\neq f(b)$ and the first differing digit is $j=M_{n,m}$. The centers $m_a,m_b$ lie in $B_{n,m}$, and the displayed identity gives $|\phi(m_a)-\phi(m_b)|_p=t_{n,m}$. This step uses only Definition~\ref{def:interprets}, so at these levels the equality holds for every interpreter.

Suppose now $M_{n,m}=N$, that is, $f$ is constant on $B_{n,m}$. All target micro-balls coincide and no pair of source configurations separates the images, so the equality is not a consequence of exactness alone and is read off the construction. Let $\phi$ be as in Proposition~\ref{prop:exact_existence}, with local isometries $g_a(z)=A_a(z-m_a)+m_{f(a)}$, $|A_a|_p=1$, and $\sup_{\BCP(a)}|\phi-g_a|_p<\varepsilon<1/p^N$. Fix $a$ with $\operatorname{trunc}_n(a)=m$ and take $x=m_a$, $y=m_a+p^N$, both in $\BCP(a)\cap\mathbb{Z}_p\subseteq B_{n,m}$. Then $|g_a(x)-g_a(y)|_p=|x-y|_p=p^{-N}$ while $|(\phi-g_a)(x)-(\phi-g_a)(y)|_p<p^{-N}$, and the strong triangle inequality leaves the isometric term in charge, giving $|\phi(x)-\phi(y)|_p=p^{-N}=t_{n,m}$.

For the approximants the same mechanism is needed at every $M_{n,m}$, since $F_n$ controls level-$n$ balls only. In the notation of the proof of Theorem~\ref{thm:existence_approximations}, on $\BCP_{1/p^n}(m)$ we may write $F_n(z)=A_m(z-m)+\beta_{n,m}+h_m(z)$ with $|A_m|_p=\Lambda_{n,m}=t_{n,m}p^{n}$, where $h_m$ collects the deviation of $\hat g_m$ from its affine part in Case~B, of sup norm at most $t_{n,m}/p$, together with $F_n-\hat g_m$, of sup norm below $\varepsilon<\min_{m'}t_{n,m'}$ by~\eqref{eq:Fn_sup_bound}. In Case~A the deviation is absent, and for $n=0$ one takes $h_0=0$. In every case $\sup_{\BCP_{1/p^n}(m)}|h_m|_p<t_{n,m}$. Taking $x=m$ and $y=m+p^{n}$, both in $B_{n,m}$, gives $|A_m(x-y)|_p=\Lambda_{n,m}p^{-n}=t_{n,m}$ and $|h_m(x)-h_m(y)|_p<t_{n,m}$, hence $|F_n(x)-F_n(y)|_p=t_{n,m}$, as claimed.
\end{proof}

\begin{remark}[Exact versus generic interpreters]
\label{rem:exact_vs_generic}
For the exact interpreters constructed in Proposition~\ref{prop:exact_existence}, the diameter of the image of the observable ball $B_{n,m}$ equals $t_{n,m}$ (Proposition~\ref{prop:radius_attained}). For a generic interpreter satisfying only the inclusion of Definition~\ref{def:interprets}, this observable diameter may be smaller, for instance when $f$ is constant on a ball. The scores are unchanged, by Proposition~\ref{thm:intrinsic_mu}.
\end{remark}

\paragraph{Analytic interpretation.}

The coarse multiplier $\Lambda_{n,m}$ is the ratio of image radius to source radius for the approximant supplied by Theorem~\ref{thm:existence_approximations}. This is the ball-level analogue of the multiplier of a fixed point in $p$-adic dynamics. An $n$-trapping ball carries an attracting fixed point of the approximant and an $n$-covering ball a repelling one (Remark~\ref{rem:fixed_points}), by Lemma~\ref{lem:closed_ball_bridge}. Without the rational dynamics over $\mathbb{C}_p$, the number $M_{n,m}$ would be only a common-prefix statistic; Theorem~\ref{thm:existence_approximations} is what turns the statistic into a statement about images of non-Archimedean balls.

\paragraph{The variational problem.}
\label{subsec:variational_problem}

The object of the paper is the minimization
\begin{equation}
\label{eq:variational_problem}
\pi^{*}\in\arg\min_{\pi\in S_N}\mu_E(\pi),
\end{equation}
taken over the finite group $S_N$. The word ``variational'' is used in the finite sense: a functional on a space of admissible configurations, here the $N!$ orderings, and its minimizers. What the minimization is asking for becomes visible once the two extremes of $\mu_E$ and its integral form are known, which is the business of Section~\ref{sec:structure_functional}.

\paragraph{Finding optimal orderings.}
\label{subsec:optimal_orderings}
\label{app:computational_basic}

For a fixed ordering, evaluating $\mu_E$ requires computing the common-prefix length \eqref{eq:Mnm_definition} at each interior level, so the cost is $O(Np^N)$. For small $N$, minimization over all $N!$ orderings can be exhaustive. For the $N=13$ \textit{A. thaliana} network, the minimizers reported in Section~\ref{sec:athaliana} were certified by branch-and-bound over all $13!$ orderings, with logs in the reproducibility bundle~\cite{padic_grn_bundle}.

%% file: 06b_structure_functional.tex
\section{Structure of the Expansion Functional}
\label{sec:structure_functional}
\label{subsec:vanishing_locus}

The definition~\eqref{eq:mu_definition} is a weighted count of expanding balls, and so far
nothing says why that particular count, with those particular weights, is the right object to
minimize. This section answers the question. It computes the two extremes of the functional,
identifies what the general term of the sum counts, and turns the canonical weights of
\eqref{eq:weight_is_haar} into an integral representation whose integrand has a pointwise
meaning. The vanishing locus of $\mu_E$ turns out to be a classical object, and once that is
known the rest follows.

We first name the combinatorial condition that governs everything below.

\begin{definition}[Prefixes and autonomous blocks]
\label{def:autonomous_block}
Let $\pi$ be an ordering. For $1\le n\le N-1$ the \emph{level-$n$ prefix} of $\pi$ is
the set of coordinates occupying the $n$ coarsest digit positions,
\[
J^\pi_n:=\{\pi(0),\pi(1),\dots,\pi(n-1)\}\subseteq\{0,\dots,N-1\}.
\]
Let $J\subseteq\{0,\dots,N-1\}$ be any set of coordinates and let $u\in\mathbb{F}_p^{J}$ be
an assignment of values to them. Write
\[
\mathcal{C}(J,u):=\{a\in\mathcal{C}\ :\ a|_{J}=u\}
\]
for the set of configurations extending $u$. We say that $J$ is \emph{autonomous at $u$}
if the map $a\mapsto f(a)|_{J}$ is constant on $\mathcal{C}(J,u)$, and that $J$ is
\emph{autonomous} if it is autonomous at every $u\in\mathbb{F}_p^{J}$, that is, if
$f(a)|_{J}$ is a function of $a|_{J}$ alone.
\end{definition}

Autonomy at $u$ is a local statement: knowing that the coordinates in $J$ currently read
$u$ already determines what they will read next, whatever the other coordinates are
doing. Autonomy without qualification is the same statement at every partial state, and
says that the subsystem carried by $J$ evolves without consulting its complement. Three
minimal examples fix the idea, all read off the toy model of
Section~\ref{sec:toy_example}, whose updates are~\eqref{eq:toy_polys}. The block
$J=\{x_0\}$ is autonomous, because $x_0'=x_0$. The block $J=\{x_2\}$ is autonomous at no
$u$ at all, because $x_2'=x_1+x_0x_1$ takes both values on every fiber $x_2=c$, the
coordinates $x_0$ and $x_1$ being free there. The block $J=\{x_1,x_2\}$ sits in between:
on the fiber over $u$ the pair $(x_1',x_2')$ equals $(0,\,u_{x_1}(1+x_0))$, which is
constant when $u_{x_1}=0$ and takes both values when $u_{x_1}=1$, so $J$ is autonomous at
exactly two of its four assignments. That intermediate case is what the local version
records, and it is what $\mu_E$ counts.

\begin{lemma}[The admissible exponents form an initial segment]
\label{lem:initial_segment}
Fix an ordering $\pi$, a level $0\le n\le N$ and an index $m$, and let
\[
\begin{aligned}
K_{n,m}:=\Bigl\{k\in\{0,\dots,N\}\ :\ &m_{f(a)}\equiv m_{f(b)}\!\!\pmod{p^{k}}\\
&\text{for all } a,b\in\mathcal{C} \text{ with }
\operatorname{trunc}_n(a)=\operatorname{trunc}_n(b)=m\Bigr\}
\end{aligned}
\]
be the set of exponents admissible in~\eqref{eq:Mnm_definition}, so that
$M_{n,m}=\max K_{n,m}$. Then $K_{n,m}$ is an \emph{initial segment} of
$\{0,\dots,N\}$, meaning that it is nonempty and closed downwards: if $k\in K_{n,m}$ and
$0\le k'\le k$ then $k'\in K_{n,m}$. Consequently $K_{n,m}=\{0,1,\dots,M_{n,m}\}$ and,
for every $0\le n\le N$,
\begin{equation}
\label{eq:Mnm_criterion}
M_{n,m}\ge n
\iff
\text{the centers } m_{f(a)},\ \operatorname{trunc}_n(a)=m,\ \text{all agree modulo } p^{n}.
\end{equation}
\end{lemma}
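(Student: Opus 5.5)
The plan is to prove the two claims of Lemma~\ref{lem:initial_segment} in order: first that $K_{n,m}$ is an initial segment, then the criterion~\eqref{eq:Mnm_criterion}. Both reduce to the elementary fact that congruence modulo $p^k$ implies congruence modulo $p^{k'}$ whenever $k'\le k$, because $p^{k'}$ divides $p^{k}$.

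For nonemptiness, $k=0$ lies in $K_{n,m}$ since every integer is congruent to every other modulo $p^0=1$. This was already noted after~\eqref{eq:Mnm_definition}. For downward closure, take $k\in K_{n,m}$ and $0\le k'\le k$. For any pair $a,b$ with $\operatorname{trunc}_n(a)=\operatorname{trunc}_n(b)=m$, the hypothesis gives $p^k\mid m_{f(a)}-m_{f(b)}$. Since $p^{k'}\mid p^k$, the same pair agrees modulo $p^{k'}$, so $k'\in K_{n,m}$.

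A nonempty, downward-closed subset of the finite set $\{0,\dots,N\}$ is exactly $\{0,1,\dots,\max K_{n,m}\}$. Since $M_{n,m}=\max K_{n,m}$ by~\eqref{eq:Mnm_definition}, this yields $K_{n,m}=\{0,\dots,M_{n,m}\}$.

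For~\eqref{eq:Mnm_criterion}, note that $0\le n\le N$, so $n$ lies in the index range. By the description $K_{n,m}=\{0,\dots,M_{n,m}\}$, we have $M_{n,m}\ge n$ if and only if $n\in K_{n,m}$. By the definition of $K_{n,m}$, the condition $n\in K_{n,m}$ says exactly that all centers $m_{f(a)}$ with $\operatorname{trunc}_n(a)=m$ agree modulo $p^n$. No step here is a real obstacle. The only care needed is to check that $n\le N$, so that the exponent $n$ is admissible in the definition of $K_{n,m}$, and that the ball indexed by $m$ is nonempty (Definition~\ref{def:canonical_target}), so that the quantified condition is not vacuous. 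Even in the vacuous case the argument would still go through.
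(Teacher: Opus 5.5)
Your proof is correct and follows the paper's argument essentially step for step. Both show $0\in K_{n,m}$, get downward closure from $p^{k'}\mid p^{k}$ (with $0\le k'\le k\le N$ keeping $k'$ in range), identify $K_{n,m}=\{0,\dots,M_{n,m}\}$ as a nonempty downward-closed subset of a finite chain, and read \eqref{eq:Mnm_criterion} as the statement $n\in K_{n,m}$, which is legitimate because $n\le N$.
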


\begin{proof}
The exponent $k=0$ lies in $K_{n,m}$, since every pair of integers is congruent modulo
$p^{0}=1$, so $K_{n,m}\neq\emptyset$. If $k\in K_{n,m}$ and $0\le k'\le k$ then
$p^{k'}\mid p^{k}$, so a congruence modulo $p^{k}$ implies the congruence modulo
$p^{k'}$, and the defining condition for $k'$ holds verbatim. Also $k'\in\{0,\dots,N\}$.
Hence $K_{n,m}$ is closed downwards. The cap $k\le N$ never interferes: the centers $m_{f(a)}$ lie in $\{0,\dots,p^{N}-1\}$, so $N\in K_{n,m}$ exactly when $f$ is constant on $B_{n,m}$, and then $M_{n,m}=N\ge n$ still gives the right answer at the levels $n\le N-1$ where the criterion is used. A nonempty downward closed subset of the finite
chain $\{0,\dots,N\}$ is the set of all elements up to its maximum, which is
$M_{n,m}$ by definition, so $K_{n,m}=\{0,\dots,M_{n,m}\}$.

For~\eqref{eq:Mnm_criterion}, note that the right-hand side is precisely the statement
$n\in K_{n,m}$, which is legitimate because $n\le N$. If $M_{n,m}\ge n$ then
$n\in\{0,\dots,M_{n,m}\}=K_{n,m}$. Conversely if $n\in K_{n,m}$ then
$M_{n,m}=\max K_{n,m}\ge n$, as required.
\end{proof}

\begin{theorem}[Combinatorial representation of the expansion functional]
\label{lem:balls_are_blocks}
Fix an ordering $\pi$ and a level $1\le n\le N-1$. The map
\begin{equation}
\label{eq:ball_block_bijection}
u\longmapsto m(u):=\sum_{i=0}^{n-1}u_{\pi(i)}\,p^{\,i}
\end{equation}
is a bijection from $\mathbb{F}_p^{J^\pi_n}$ onto $\{0,\dots,p^{n}-1\}$, it identifies
the fiber $\mathcal{C}(J^\pi_n,u)$ with the set of configurations lying in the level-$n$
ball $B_{n,m(u)}$, and
\begin{equation}
\label{eq:E_is_nonautonomy}
m(u)\in E(n)\iff J^\pi_n \text{ is not autonomous at } u .
\end{equation}
Consequently
\begin{equation}
\label{eq:mu_master_formula}
\mu_E(\pi)=\sum_{n=1}^{N-1}p^{\,N-n}\cdot
\#\bigl\{u\in\mathbb{F}_p^{J^\pi_n}\ :\ J^\pi_n \text{ is not autonomous at } u\bigr\}.
\end{equation}
\end{theorem}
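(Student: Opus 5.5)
The plan is to prove the three assertions in order, each reducing to digit bookkeeping in the encoding~\eqref{eq:mapi_definition}, with Lemma~\ref{lem:initial_segment} doing the one piece of real work.

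\emph{The bijection.} Since $0\le u_{\pi(i)}\le p-1$, the integer $m(u)$ is a base-$p$ number whose $n$ digits are exactly the values $u_{\pi(0)},\dots,u_{\pi(n-1)}$. Uniqueness of base-$p$ expansions then gives injectivity, and surjectivity follows because both sets have $p^n$ elements. For the fiber identification I would use the observation recorded after~\eqref{eq:mapi_definition}: $\operatorname{trunc}_n(a)=m_a\bmod p^n=\sum_{i<n}a_{\pi(i)}p^i$. Hence $\operatorname{trunc}_n(a)=m(u)$ holds if and only if $a_{\pi(i)}=u_{\pi(i)}$ for all $i<n$, that is, if and only if $a|_{J^\pi_n}=u$. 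So $\mathcal{C}(J^\pi_n,u)$ is exactly the set of configurations in $B_{n,m(u)}$.

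\emph{The equivalence~\eqref{eq:E_is_nonautonomy}.} By~\eqref{eq:EAI_sets}, $m(u)\notin E(n)$ means $M_{n,m(u)}\ge n$. By criterion~\eqref{eq:Mnm_criterion} of Lemma~\ref{lem:initial_segment}, this holds if and only if the centers $m_{f(a)}$ with $a\in\mathcal{C}(J^\pi_n,u)$ all agree modulo $p^n$. Now $m_{f(a)}\bmod p^n=\sum_{i<n}f(a)_{\pi(i)}p^i$, and by the same base-$p$ uniqueness two such residues agree exactly when $f(a)|_{J^\pi_n}=f(b)|_{J^\pi_n}$. So agreement modulo $p^n$ across the fiber says precisely that $a\mapsto f(a)|_{J^\pi_n}$ is constant on $\mathcal{C}(J^\pi_n,u)$, which is autonomy at $u$. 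Negating both sides gives~\eqref{eq:E_is_nonautonomy}.

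This step is the main obstacle, or rather the only place needing care. The risk is that $M_{n,m}\ge n$ might be read as "agreement modulo $p^{M}$" rather than "modulo $p^n$". The downward closure in Lemma~\ref{lem:initial_segment} is exactly what legitimizes testing the single exponent $k=n$. The cap $k\le N$ is harmless because $n\le N-1$.

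\emph{The formula~\eqref{eq:mu_master_formula}.} In~\eqref{eq:mu_definition}, the $n=0$ term vanishes since $E(0)=\emptyset$ (Remark~\ref{rem:range_endpoints}). For $1\le n\le N-1$, the bijection $u\mapsto m(u)$ carries the set of $u$ at which $J^\pi_n$ is not autonomous onto $E(n)$, so the two sets have the same cardinality. Substituting this count into~\eqref{eq:mu_definition} yields the formula.
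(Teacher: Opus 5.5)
Your proposal is correct and follows the paper's proof essentially step for step. It uses the base-$p$ digit bookkeeping of \eqref{eq:mapi_definition} for the bijection and the fiber identification, then criterion \eqref{eq:Mnm_criterion} of Lemma~\ref{lem:initial_segment} to turn $m(u)\notin E(n)$ into agreement of the image centers modulo $p^{n}$, which is autonomy at $u$, and finally substitutes into \eqref{eq:mu_definition} using $E(0)=\emptyset$.
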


\begin{proof}
By~\eqref{eq:mapi_definition} the base-$p$ digits of $m_a$ are the values
$a_{\pi(0)},\dots,a_{\pi(N-1)}$, so
$\operatorname{trunc}_n(a)=m_a\bmod p^{n}=\sum_{i<n}a_{\pi(i)}p^{\,i}$. The
map~\eqref{eq:ball_block_bijection} is therefore the restriction of $a\mapsto
\operatorname{trunc}_n(a)$ to the data $a|_{J^\pi_n}$, and it is a bijection because
base-$p$ expansion with $n$ digits is one. In particular
$\operatorname{trunc}_n(a)=m(u)$ if and only if $a|_{J^\pi_n}=u$, which is the asserted
identification of the fiber with the ball.

The same computation applied to the image configurations shows that
$\operatorname{trunc}_n(m_{f(a)})=\sum_{i<n}f(a)_{\pi(i)}p^{\,i}$ determines and is
determined by the restriction $f(a)|_{J^\pi_n}$. Hence the centers $m_{f(a)}$, for $a$
in the ball, all agree modulo $p^{n}$ if and only if $f(a)|_{J^\pi_n}$ is constant on
the fiber $\mathcal{C}(J^\pi_n,u)$, that is, if and only if $J^\pi_n$ is autonomous at
$u$. By Lemma~\ref{lem:initial_segment} the left-hand side is the statement
$M_{n,m(u)}\ge n$, whose negation is $m(u)\in E(n)$ by~\eqref{eq:EAI_sets}. This
proves~\eqref{eq:E_is_nonautonomy}.

Formula~\eqref{eq:mu_master_formula} is now~\eqref{eq:mu_definition} with $|E(n)|$
rewritten through the bijection, the level $n=0$ being omitted because $E(0)=\emptyset$
(Remark~\ref{rem:range_endpoints}).
\end{proof}

Formula~\eqref{eq:mu_master_formula} is the representation the rest of the paper works
from. It turns a qualitative condition into a quantity graded by resolution, and every
statement below is read off it.

We now name the classical conditions that the vanishing of $\mu_E$ turns out to express,
and the $p$-adic map on which they are stated. The equivalences among triangularity,
compatibility and the $1$-Lipschitz condition are standard in the $p$-adic automata
literature (\cite[Theorem~2.10, Corollaries~2.11--2.12, Proposition~2.13]{anashin2021automata_chapter};
\cite[\S3.8]{Anashin2009}). What
Theorem~\ref{thm:muE_zero_lipschitz} adds is that this classical locus is the zero set of
the graded obstruction~\eqref{eq:mu_master_formula}, and that the finitely many interior
scales $1\le n\le N-1$ already decide the matter, the root and the scales below $p^{-N}$
being automatic.

\begin{definition}[Triangular maps, compatible maps, and the lift]
\label{def:triangular_compatible}
Let $\pi$ be an ordering.
\begin{enumerate}[label=\textup{(\alph*)}]
  \item The map $f$ is \emph{triangular for $\pi$} if, for every $0\le i\le N-1$, the
        value $f(a)_{\pi(i)}$ is determined by $a_{\pi(0)},\dots,a_{\pi(i)}$, that is,
        if there are functions $\psi_i\colon\mathbb{F}_p^{\,i+1}\to\mathbb{F}_p$ with
        $f(a)_{\pi(i)}=\psi_i\bigl(a_{\pi(0)},\dots,a_{\pi(i)}\bigr)$ for all
        $a\in\mathcal{C}$. In words, a coordinate may look at the coordinates placed
        before it and at itself, and at nothing that comes later. The name is
        from~\cite[Definition~3.37]{Anashin2009}: listing the components as
        $\psi_0(a_{\pi(0)}),\ \psi_1(a_{\pi(0)},a_{\pi(1)}),\ \psi_2(a_{\pi(0)},a_{\pi(1)},a_{\pi(2)}),\dots$
        displays their dependencies in a triangular array, the $i$-th taking $i+1$
        arguments.
  \item Since $a\mapsto m_a$ is a bijection of $\mathcal{C}$ onto
        $\{0,\dots,p^{N}-1\}$, the map $f$ induces
        \[
        F_\pi\colon\mathbb{Z}/p^{N}\mathbb{Z}\to\mathbb{Z}/p^{N}\mathbb{Z},
        \qquad F_\pi(m_a):=m_{f(a)} .
        \]
        We call $F_\pi$ \emph{compatible} if $x\equiv y\pmod{p^{n}}$ implies
        $F_\pi(x)\equiv F_\pi(y)\pmod{p^{n}}$ for every $0\le n\le N$, that is, if
        $F_\pi$ descends to every quotient $\mathbb{Z}/p^{n}\mathbb{Z}$.
  \item The \emph{lift} of $f$ along $\pi$ is the locally constant map
        \[
        \begin{aligned}
        &\widehat{F}_\pi\colon\mathbb{Z}_p\to\mathbb{Z}_p,\qquad
        \widehat{F}_\pi(z):=m_{f(a)},\\
        &\text{where $a\in\mathcal{C}$ is the unique configuration with }
        m_a=\operatorname{trunc}_N(z).
        \end{aligned}
        \]
        It is the map that reads a $p$-adic integer at resolution $N$, applies $f$ to the
        configuration it sees, and returns the integer center of the image.
\end{enumerate}
\end{definition}

Compatibility and the $1$-Lipschitz condition are the same requirement written twice.
A map $g\colon\mathbb{Z}_p\to\mathbb{Z}_p$ is $1$-Lipschitz when
$|g(z)-g(w)|_p\le|z-w|_p$ for all $z,w$, and since $|z-w|_p\le p^{-n}$ says exactly that
$z\equiv w\pmod{p^{n}}$, the inequality at the scale $p^{-n}$ is exactly the implication
in Definition~\ref{def:triangular_compatible}(b) at that $n$. The only thing to check,
and it is the content of the equivalence (v)~$\Leftrightarrow$~(vi) below, is that for
the lift $\widehat{F}_\pi$ the scales $n\ge N$ are automatic.

\begin{theorem}[Automaton locus of $\mu_E$]
\label{thm:muE_zero_lipschitz}
Let $f\colon\mathcal{C}\to\mathcal{C}$ and let $\pi$ be an ordering. The following are
equivalent.
\begin{enumerate}[label=\textup{(\roman*)}]
  \item $\mu_E(\pi)=0$.
  \item\label{locus:multipliers} $\Lambda_{n,m}\le 1$ for every $0\le n\le N-1$ and every $m$, that is, no
        observable ball is expanding at any level.
  \item Every prefix $J^\pi_n$, $1\le n\le N-1$, is autonomous.
  \item $f$ is triangular for $\pi$.
  \item $F_\pi$ is compatible.
  \item $\widehat{F}_\pi$ is $1$-Lipschitz.
\end{enumerate}
\end{theorem}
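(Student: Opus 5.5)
The plan is to run a cycle of equivalences that rests almost entirely on Theorem~\ref{lem:balls_are_blocks} and on the digit description of $m_a$ in~\eqref{eq:mapi_definition}.

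\textbf{(i)$\Leftrightarrow$(ii)$\Leftrightarrow$(iii).} Every summand of~\eqref{eq:mu_definition} is a nonnegative integer times a positive weight. So $\mu_E(\pi)=0$ holds exactly when $E(n)=\emptyset$ for all $0\le n\le N-1$, and by~\eqref{eq:EAI_sets} this says $\Lambda_{n,m}\le1$ everywhere. For (iii) I use~\eqref{eq:E_is_nonautonomy}: $E(n)=\emptyset$ holds exactly when $J^\pi_n$ is autonomous at every $u$, that is, when it is autonomous. The level $n=0$ is free, since $E(0)=\emptyset$ by Remark~\ref{rem:range_endpoints}.

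\textbf{(iii)$\Leftrightarrow$(iv).} Triangularity says that for each $i$ the value $f(a)_{\pi(i)}$ depends only on $a|_{J^\pi_{i+1}}$. Autonomy of $J^\pi_n$ says that $f(a)_{\pi(i)}$ for $i<n$ depends only on $a|_{J^\pi_n}$.
\begin{itemize}
\item For (iv)$\Rightarrow$(iii), take $i<n$. Then $J^\pi_{i+1}\subseteq J^\pi_n$, so each component in question is a function of $a|_{J^\pi_n}$.
\item For (iii)$\Rightarrow$(iv), take $i\le N-2$ and apply autonomy of $J^\pi_{i+1}$, which gives the condition for coordinate $\pi(i)$. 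The last coordinate $i=N-1$ is the one subtle index, and it is automatic: $J^\pi_N$ is the full coordinate set, so $f(a)_{\pi(N-1)}$ trivially depends only on $a_{\pi(0)},\dots,a_{\pi(N-1)}$.
\end{itemize}
This explains why the prefixes $n\le N-1$ suffice.

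\textbf{(iv)$\Leftrightarrow$(v).} I translate both conditions into digits. We have $x\equiv y\pmod{p^n}$ for $x=m_a$, $y=m_b$ exactly when $a|_{J^\pi_n}=b|_{J^\pi_n}$. Likewise $F_\pi(x)\equiv F_\pi(y)\pmod{p^n}$ exactly when $f(a)|_{J^\pi_n}=f(b)|_{J^\pi_n}$, by the computation in the proof of Theorem~\ref{lem:balls_are_blocks}. So compatibility at level $n$ is autonomy of $J^\pi_n$. The levels $n=0$ and $n=N$ are trivial, and (v) is therefore literally (iii).

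\textbf{(v)$\Leftrightarrow$(vi).} Suppose $\widehat F_\pi$ is $1$-Lipschitz and $x\equiv y \pmod{p^n}$ with $n\le N$, where $x,y\in\{0,\dots,p^N-1\}$. Then $\widehat F_\pi(x)=F_\pi(x)$ and $\widehat F_\pi(y)=F_\pi(y)$, so the Lipschitz inequality at scale $p^{-n}$ gives compatibility.

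Conversely, take $z\ne w$ with $|z-w|_p=p^{-n}$.
\begin{itemize}
\item If $n\ge N$, then $\operatorname{trunc}_N z=\operatorname{trunc}_N w$. Hence $\widehat F_\pi(z)=\widehat F_\pi(w)$ and the inequality holds.
\item If $n<N$, then $\operatorname{trunc}_N z\equiv\operatorname{trunc}_N w\pmod{p^n}$. Compatibility makes their images congruent mod $p^n$, so $|\widehat F_\pi(z)-\widehat F_\pi(w)|_p\le p^{-n}$.
\end{itemize}

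The only point needing care is the automatic behaviour at the endpoints: the last triangular coordinate and the scales $n\ge N$. I expect no real obstacle beyond stating the digit identification cleanly.
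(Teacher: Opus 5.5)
Your proof is correct and follows the paper's argument in all essentials. The same digit identification of $\operatorname{trunc}_n$ with restriction to $J^\pi_n$ drives (i)$\Leftrightarrow$(ii)$\Leftrightarrow$(iii), (iii)$\Leftrightarrow$(v) and (v)$\Leftrightarrow$(vi), and the level $n=N$ and the scales $n\ge N$ are automatic exactly as in the paper. The differences are cosmetic: you attach triangularity to (iii) through the inclusions $J^\pi_{i+1}\subseteq J^\pi_n$, where the paper attaches it to (v) by reading off digit $i$ of a congruence modulo $p^{i+1}$; and the block you head (iv)$\Leftrightarrow$(v) actually proves (iii)$\Leftrightarrow$(v), which still closes the cycle.
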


\begin{proof}
(i)~$\Leftrightarrow$~(ii). By~\eqref{eq:EAI_sets} the set $E(n)$ collects the indices
with $\Lambda_{n,m}>1$, and by~\eqref{eq:mu_definition} the score $\mu_E(\pi)$ is a sum
of nonnegative terms with strictly positive weights $p^{N-n}$. It vanishes if and only
if every $E(n)$ is empty, which is (ii).

(i)~$\Leftrightarrow$~(iii). Same argument applied
to~\eqref{eq:mu_master_formula}: all the counts are nonnegative integers and all the
weights are positive, so the sum vanishes if and only if every count vanishes, that is,
if and only if no prefix fails autonomy at any $u$.

(iii)~$\Leftrightarrow$~(v). Fix $n$. The condition in
Definition~\ref{def:triangular_compatible}(b) at level $n$ says that
$\operatorname{trunc}_n\circ F_\pi$ factors through $\operatorname{trunc}_n$, that is,
that all the image centers of a level-$n$ ball agree modulo $p^{n}$, for every ball. It
is vacuous at $n=0$, where $\operatorname{trunc}_0$ is constant, and vacuous at $n=N$,
where $\operatorname{trunc}_N$ is the identity of $\mathbb{Z}/p^{N}\mathbb{Z}$ so that
the hypothesis $x\equiv y\pmod{p^{N}}$ already forces $x=y$. For $1\le n\le N-1$ it is
the autonomy of $J^\pi_n$ at every $u$, by~\eqref{eq:E_is_nonautonomy}. Ranging over
$n$ gives the equivalence.

(v)~$\Rightarrow$~(iv). Fix $0\le i\le N-1$ and let $a,b\in\mathcal{C}$ agree in the
coordinates $\pi(0),\dots,\pi(i)$. Then $m_a\equiv m_b\pmod{p^{\,i+1}}$, so (v) applied
with $n=i+1$ gives $m_{f(a)}\equiv m_{f(b)}\pmod{p^{\,i+1}}$. Comparing the digit in
position $i$ on both sides yields $f(a)_{\pi(i)}=f(b)_{\pi(i)}$. Thus $f(a)_{\pi(i)}$ is
determined by $a_{\pi(0)},\dots,a_{\pi(i)}$, which is (iv).

(iv)~$\Rightarrow$~(v). Let $x\equiv y\pmod{p^{n}}$ with $0\le n\le N$, and write
$x=m_a$, $y=m_b$. Then $a$ and $b$ agree in the coordinates
$\pi(0),\dots,\pi(n-1)$, so by (iv) the values $f(a)_{\pi(i)}$ and $f(b)_{\pi(i)}$
coincide for every $i\le n-1$. These are the digits in positions $0,\dots,n-1$ of
$m_{f(a)}$ and $m_{f(b)}$, whence $F_\pi(x)\equiv F_\pi(y)\pmod{p^{n}}$.

(v)~$\Leftrightarrow$~(vi). Let $z,w\in\mathbb{Z}_p$. If $|z-w|_p\le p^{-N}$ then
$\operatorname{trunc}_N(z)=\operatorname{trunc}_N(w)$, so
$\widehat{F}_\pi(z)=\widehat{F}_\pi(w)$ and the Lipschitz inequality holds trivially at
that scale and at every finer one. If $|z-w|_p=p^{-n}$ with $0\le n\le N-1$ then
$\operatorname{trunc}_n(z)=\operatorname{trunc}_n(w)$, and the inequality
$|\widehat{F}_\pi(z)-\widehat{F}_\pi(w)|_p\le p^{-n}$ says exactly that the two image
centers are congruent modulo $p^{n}$. Ranging over $n$, the $1$-Lipschitz condition for
$\widehat{F}_\pi$ is the conjunction of the congruences in (v) for
$0\le n\le N-1$, and the remaining case $n=N$ of (v) is vacuous as noted above. This completes the proof.
\end{proof}

\begin{remark}[$\mu_E$ as a graded obstruction to being an automaton]
\label{rem:muE_graded_obstruction}
Conditions (ii), (iv), (v) and (vi) are a standard package in the $p$-adic theory of
automata. By~\cite[Theorem~2.10]{anashin2021automata_chapter} the $1$-Lipschitz maps of
$\mathbb{Z}_p$ are exactly the automaton functions of initial transducers on the alphabet
$\mathbb{F}_p$, whose state set is not required to be
finite~\cite[\S2.3]{anashin2021automata_chapter} and where the finite-state ones form a
proper subclass treated separately there, by~\cite[Corollary~2.12]{anashin2021automata_chapter} exactly the maps
compatible with every congruence modulo $p^{r}$, and
by~\cite[Proposition~2.13]{anashin2021automata_chapter} and \cite[Proposition~3.35]{Anashin2009} exactly the triangular ones. The
same vocabulary, distributed over several statements, is in~\cite[\S3.8]{Anashin2009}.
Item~\ref{locus:multipliers} is the register in which that literature works: a map is an automaton function
if and only if it sends every ball of radius $p^{-r}$ into a ball of radius $p^{-r}$ for
$r\ge1$~\cite[Corollary~2.11]{anashin2021automata_chapter}, which for $r=n\le N-1$ is
$M_{n,m}\ge n$, that is $\Lambda_{n,m}\le1$, the scales $r\ge N$ being automatic because
$\widehat{F}_\pi$ is constant on balls of radius $p^{-N}$. So $\mu_E(\pi)=0$ says that the
finite system, read in the ordering $\pi$, is a $p$-adic automaton in that transducer sense, and
formula~\eqref{eq:mu_master_formula} exhibits $\mu_E$ as the graded obstruction to being
one: it records which prefixes fail to be autonomous and at which resolution. The approximants
carry the same reading: by Definition~\ref{def:nth_approximation} the image of
$\BCP_{1/p^n}(m)$ under $F_n$ is a ball of radius $t_{n,m}$, so $\Lambda_{n,m}\le1$ says
exactly that $F_n$ sends that ball to one of no larger radius.
\end{remark}

\noindent The triangular maps can be counted exactly, and the count shows that the vanishing locus is a vanishingly small part of the $p^{\,Np^{N}}$ maps of $\mathcal{C}$, the same part for every ordering. The Supplementary Material records the computation. What matters here is which orderings reach the locus, and the dependency digraph answers that.

\begin{definition}[Essential dependence and the dependency digraph]
\label{def:dependency_digraph}
The coordinate function $f(\cdot)_i$ \emph{depends essentially} on $a_j$ if there exist
$a,b\in\mathcal{C}$ differing only in the coordinate $j$ with $f(a)_i\ne f(b)_i$. The
\emph{dependency digraph} $G_f$ has vertex set $\{0,\dots,N-1\}$ and an edge $j\to i$
whenever $f(\cdot)_i$ depends essentially on $a_j$. Loops are allowed and record
self-dependence. This is the graph drawn for the toy model in the Supplementary
Material.
\end{definition}

\begin{corollary}[When the minimum is zero]
\label{cor:min_zero_dag}
Let $G_f^{\circ}$ denote $G_f$ with its loops deleted. Then
\[
\min_{\pi\in S_N}\mu_E(\pi)=0\iff G_f^{\circ}\ \text{is acyclic},
\]
and in that case $\{\pi:\mu_E(\pi)=0\}$ is exactly the set of topological orders of
$G_f^{\circ}$, that is, the orderings $\pi$ for which every edge $j\to i$ of
$G_f^{\circ}$ satisfies $\pi^{-1}(j)<\pi^{-1}(i)$.
\end{corollary}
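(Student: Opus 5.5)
The plan is to reduce the corollary to Theorem~\ref{thm:muE_zero_lipschitz}, specifically the equivalence (i)~$\Leftrightarrow$~(iv): $\mu_E(\pi)=0$ if and only if $f$ is triangular for $\pi$. Everything then hinges on a purely combinatorial lemma.

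\emph{Lemma.} $f$ is triangular for $\pi$ if and only if every edge $j\to i$ of $G_f^{\circ}$ satisfies $\pi^{-1}(j)<\pi^{-1}(i)$.

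Granting this lemma, the set $\{\pi:\mu_E(\pi)=0\}$ is exactly the set of topological orders of $G_f^{\circ}$. A digraph admits a topological order if and only if it is acyclic, so the minimum is zero exactly when $G_f^{\circ}$ is acyclic. The minimum over $S_N$ is always attained and $\mu_E\ge0$, so this covers both assertions.

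For the direction ``triangular $\Rightarrow$ edge condition'', I argue by contraposition. Suppose $j\ne i$ is an edge with $\pi^{-1}(j)>\pi^{-1}(i)$. Put $k:=\pi^{-1}(i)$. By essential dependence there are $a,b$ differing only in coordinate $j$ with $f(a)_i\neq f(b)_i$. Since $j\notin\{\pi(0),\dots,\pi(k)\}$, the configurations $a$ and $b$ agree on those coordinates. Triangularity would then force $f(a)_{\pi(k)}=f(b)_{\pi(k)}$, a contradiction.

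For the converse, assume the edge condition and fix $i=\pi(k)$. Take $a,b$ agreeing on $\pi(0),\dots,\pi(k)$. I would join them by a path of configurations changing one coordinate at a time, each changed coordinate being some $\pi(l)$ with $l>k$. At each step the changed coordinate $j=\pi(l)$ differs from $i$ and comes later in the order, so $j\to i$ is not an edge of $G_f^{\circ}$ and hence not an edge of $G_f$. Non-essential dependence then means that $f(\cdot)_i$ is unchanged along that step. Chaining the steps gives $f(a)_i=f(b)_i$, which is triangularity.

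I expect the only point needing care to be this last step. Essential dependence is defined through single-coordinate changes at arbitrary base points, so its negation does say that $f(\cdot)_i$ is invariant under \emph{every} single change of coordinate $j$. That invariance is exactly what the path argument uses, so no interaction issues arise; loops are harmless because a coordinate may depend on itself under triangularity. The rest is the standard fact that topological orders exist exactly for acyclic digraphs.
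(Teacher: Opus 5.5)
Your proposal is correct and follows the paper's proof. The paper likewise reduces to Theorem~\ref{thm:muE_zero_lipschitz}(iv) and proves the same chain-of-single-coordinate-changes fact (a coordinate function is determined by a set $T$ of coordinates if and only if it depends essentially on no coordinate outside $T$) to identify triangularity for $\pi$ with the edge condition on $G_f^{\circ}$. It then uses that topological orders exist exactly for acyclic digraphs; the only difference is that the paper writes out this last standard fact via a vertex of in-degree zero, which you take as known.
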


\begin{proof}
We first record the elementary fact that a function $h\colon\mathcal{C}\to\mathbb{F}_p$
is determined by the coordinates in a set $T$ if and only if it depends essentially on
no coordinate outside $T$. If $h$ is determined by the coordinates in $T$ and $j\notin T$,
then two configurations differing only at $j$ agree on $T$ and therefore receive the same
value under $h$, so $h$ does not depend essentially on $j$. For the converse, suppose $h$
depends essentially on no $j\notin T$ and let $a,b$ agree on $T$. Changing the
coordinates outside $T$ one at a time turns $a$ into $b$ through a chain of
configurations, consecutive members of which differ in a single coordinate outside $T$,
so $h$ is unchanged along the chain and $h(a)=h(b)$.

By Theorem~\ref{thm:muE_zero_lipschitz}(iv), $\mu_E(\pi)=0$ holds if and only if, for
every $i$, the function $f(\cdot)_{\pi(i)}$ is determined by the coordinates
$\pi(0),\dots,\pi(i)$, which by the previous paragraph is the same as saying that
$f(\cdot)_{\pi(i)}$ depends essentially on no coordinate placed after position $i$. In
terms of $G_f$ this says that every edge $j\to k$ with $j\ne k$ satisfies
$\pi^{-1}(j)<\pi^{-1}(k)$, the inequality being strict because two distinct coordinates
occupy distinct positions, while a loop $k\to k$ imposes the vacuous condition
$\pi^{-1}(k)\le\pi^{-1}(k)$. That is the definition of a topological order of
$G_f^{\circ}$. Finally, a finite digraph admits a topological order if and only if it is
acyclic. Along a directed cycle the positions $\pi^{-1}$ would increase strictly and
return to their starting value, so no topological order exists when a cycle is present.
Conversely, a digraph with no cycle has a vertex of in-degree zero, since a walk taken
backwards along edges visits at most $N$ distinct vertices, and a walk that never stops
repeats one and closes a cycle. Placing such a vertex first and repeating the argument on
the digraph that remains builds a topological order in $N$ steps. An ordering with
$\mu_E(\pi)=0$ therefore exists if and only if $G_f^{\circ}$ is acyclic, and since
$\mu_E$ is a nonnegative function on the finite nonempty set $S_N$, its minimum is
attained and vanishes exactly when some ordering attains zero. That is the asserted
equivalence, the identification of $\{\pi:\mu_E(\pi)=0\}$ with the set of topological
orders having been established above.
\end{proof}

In the vocabulary of the application, $\mu_E$ reaches $0$ exactly on systems with no
feedback between distinct coordinates, and the minimizers are then the hierarchies that
read masters before subordinates. The toy model of Section~\ref{sec:toy_example} is such
a system: its updates~\eqref{eq:toy_polys} are triangular in the displayed ordering,
which is why $\mu_E=0$ there, and its loop-free dependency digraph has the two
topological orders $(x_0,x_1,x_2,x_3)$ and $(x_1,x_0,x_2,x_3)$, which by
Corollary~\ref{cor:min_zero_dag} are exactly the minimizers. For systems with feedback,
and the \textit{A.\ thaliana} network of Section~\ref{sec:athaliana} is one, the
positive value $\min_\pi\mu_E>0$ is the obstruction to triangularizing $f$ by permuting
coordinates, measured across scales.

For each level $n$ and each type $T\in\{A,E,I\}$, let
\begin{equation}
\label{eq:Un_expanding_region}
U_n^{T}:=\bigsqcup_{m\in T(n)}B_{n,m}\ \subset\ \mathbb{Z}_p
\end{equation}
be the union of the observable source cylinders carrying that label, and write $\Phi^{T}_\pi(x)$ for the number of levels $0\le n\le N-1$ at which the ball containing $x$ is of type $T$. Haar measure is applied to these source cylinders. It is not applied to their analytic image balls in $\mathbb{C}_p$, nor to the pointwise sets $F_n(B_{n,m})$. Since the $B_{n,m}$ are disjoint of measure $p^{-n}$, one has $\lambda_{\mathrm{H}}(U_n^{T})=|T(n)|p^{-n}$, and~\eqref{eq:weight_is_haar} turns each of the three scores into an integral.

\begin{proposition}[Haar integral form]
\label{prop:haar_integral_form}
Let $\lambda_{\mathrm{H}}$ be the normalized Haar measure on $\mathbb{Z}_p$. For every ordering $\pi$ and every $T\in\{A,E,I\}$,
\begin{equation}
\label{eq:mu_haar_integral}
\mu_T(\pi)=p^N\!\int_{\mathbb{Z}_p}\Phi^{T}_\pi(x)\,d\lambda_{\mathrm{H}}(x),
\qquad\text{and}\qquad
\Phi^{A}_\pi+\Phi^{E}_\pi+\Phi^{I}_\pi\equiv N .
\end{equation}
\end{proposition}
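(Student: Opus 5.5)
The plan is to write $\Phi^T_\pi$ as a finite sum of indicator functions and integrate term by term. By definition, $\Phi^T_\pi(x)$ counts the levels $0\le n\le N-1$ at which the level-$n$ ball containing $x$ has type $T$. That ball is $B_{n,m}$ with $m=\operatorname{trunc}_n(x)$, so $x$ contributes at level $n$ exactly when $x\in U_n^T$ as defined in~\eqref{eq:Un_expanding_region}. This gives the pointwise identity
\[
\Phi^T_\pi=\sum_{n=0}^{N-1}\mathbf{1}_{U_n^T}.
\]
Each $U_n^T$ is a finite disjoint union of the clopen cylinders $B_{n,m}$. Hence $\Phi^T_\pi$ is a locally constant, bounded, integer-valued function, and it is integrable for $\lambda_{\mathrm H}$.

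Next I integrate. Finite additivity of Haar measure on the disjoint cylinders, together with $\lambda_{\mathrm H}(B_{n,m})=p^{-n}$, gives $\lambda_{\mathrm H}(U_n^T)=|T(n)|\,p^{-n}$. Multiplying by $p^N$ and using~\eqref{eq:weight_is_haar}, I obtain
\[
p^N\int_{\mathbb{Z}_p}\Phi^T_\pi\,d\lambda_{\mathrm H}=\sum_{n=0}^{N-1}|T(n)|\,p^{N-n},
\]
which is $\mu_T(\pi)$ by~\eqref{eq:mu_definition}. The range of $n$ matters here: the scores sum over $0\le n\le N-1$, and $\Phi^T_\pi$ is defined over the same range. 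For $T=E$ the root term vanishes because $E(0)=\emptyset$, but including it causes no harm.

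For the pointwise identity, I fix $x$ and a level $n$. The point $x$ lies in exactly one level-$n$ ball, and by Lemma~\ref{lem:level_partition} that ball lies in exactly one of $E(n)$, $A(n)$, $I(n)$, by trichotomy of $M_{n,m}$ against $n$. So $\mathbf{1}_{U_n^A}+\mathbf{1}_{U_n^E}+\mathbf{1}_{U_n^I}\equiv1$ for each of the $N$ levels. Summing over $n$ gives $\Phi^A_\pi+\Phi^E_\pi+\Phi^I_\pi\equiv N$.

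I do not expect a real obstacle. The only point needing care is keeping the level ranges consistent: $\Phi^T$ must count exactly the levels $0,\dots,N-1$ that the scores use, with the singleton level $N$ excluded. As a consistency check, integrating the pointwise identity recovers the control identity of Lemma~\ref{lem:mu_checksum}.
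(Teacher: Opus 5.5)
Your proposal is correct and follows essentially the same route as the paper. You decompose $\Phi^{T}_\pi=\sum_{n=0}^{N-1}\mathbf{1}_{U_n^{T}}$ over clopen cylinders, integrate using $\lambda_{\mathrm{H}}(U_n^{T})=|T(n)|p^{-n}$, and get the pointwise identity from the trichotomy of $M_{n,m}$ against $n$ at each of the $N$ levels; the paper additionally spells out why each residue class is clopen, via the discrete value group.
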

\begin{proof}
By~\eqref{eq:Un_expanding_region} we may write $\Phi^{T}_\pi=\sum_{n=0}^{N-1}\mathbf{1}_{U_n^{T}}$. Each $U_n^{T}$ is clopen. Indeed, $B_{n,m}$ is the residue class $m+p^{n}\mathbb{Z}_p$ of Section~\ref{sec:setup}, which is open because the value group $|\mathbb{Q}_p^{\times}|=p^{\mathbb{Z}}$ is discrete, so that the closed ball of radius $p^{-n}$ about $m$ coincides with the open ball of radius $p^{-n+1}$, and closed because its complement in $\mathbb{Z}_p$ is the union of the other $p^{n}-1$ classes at that level. A finite union of such classes is again clopen. Hence $\Phi^{T}_\pi$ is a finite sum of indicator functions of clopen sets, locally constant and integrable, and
\[
\int_{\mathbb{Z}_p}\Phi^{T}_\pi\,d\lambda_{\mathrm{H}}=\sum_{n=0}^{N-1}\lambda_{\mathrm{H}}(U_n^{T})=\sum_{n=0}^{N-1}|T(n)|p^{-n}.
\]
Multiplying by $p^N$ gives $\sum_{n=0}^{N-1}|T(n)|p^{N-n}$, which is $\mu_T(\pi)$ by~\eqref{eq:mu_definition}. For the second identity, each $x$ lies in exactly one ball at each of the $N$ levels $n=0,\dots,N-1$, and that ball carries exactly one of the three labels, so the three counts partition $\{0,\dots,N-1\}$ pointwise. Integrating it recovers Lemma~\ref{lem:mu_checksum}.
\end{proof}

\noindent We write $\Phi_\pi:=\Phi^{E}_\pi$ from here on, since the expansion score is the one that is minimized. The identity $\Phi^{A}_\pi+\Phi^{E}_\pi+\Phi^{I}_\pi\equiv N$ refines the control identity pointwise: the budget $Np^N$ is spent branch by branch, $N$ units on each.

\noindent The sum counts with multiplicity across resolutions. A single point may lie in expanding cylinders at several levels, and each of them contributes, so the two sides of
\[
\lambda_{\mathrm{H}}\Bigl(\bigcup_{n}U^{E}_n\Bigr)\ \le\ \sum_{n}\lambda_{\mathrm{H}}(U^{E}_n)
\]
agree only when the $U^{E}_n$ are pairwise disjoint. The inequality is the subadditivity of $\lambda_{\mathrm{H}}$, and the criterion for equality is exact here because a nonempty intersection of two of these clopen sets already has positive measure.
Accordingly $\mu_E(\pi)/p^N$ is the Haar average number of expanding resolutions along a branch of the residue tree. It is an average of a function with values in $\{0,1,\dots,N-1\}$, so it lies in the interval $[0,N-1]$, and inside that interval it can only land in the finite set $p^{-N}\mathbb{Z}\cap[0,N-1]$, because $\mu_E(\pi)$ is a nonnegative integer. Proposition~\ref{prop:Phi_pointwise} identifies the branches it averages over.

\begin{proposition}[Pointwise reading of the Haar integrand]
\label{prop:Phi_pointwise}
For $x\in\mathbb{Z}_p$,
\[
\Phi_\pi(x)=\#\Bigl\{0\le n\le N-1\ :\ \exists\,y\in\mathbb{Z}_p \text{ with }
|y-x|_p\le p^{-n} \text{ and }
\bigl|\widehat{F}_\pi(y)-\widehat{F}_\pi(x)\bigr|_p>p^{-n}\Bigr\},
\]
so that $\Phi_\pi(x)$ counts the scales at which the $1$-Lipschitz inequality fails at
the point $x$, and $\mu_E(\pi)/p^{N}$ is the average number of such scales, a number in
$[0,N-1]$.
\end{proposition}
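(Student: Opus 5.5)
The plan is to prove the displayed identity level by level and then average. By definition $\Phi_\pi(x)=\Phi^{E}_\pi(x)=\sum_{n=0}^{N-1}\mathbf{1}_{U^E_n}(x)$, so it suffices to show, for each fixed $0\le n\le N-1$, that
\[
x\in U^{E}_n\iff \exists\,y\in\mathbb{Z}_p,\ |y-x|_p\le p^{-n},\ \bigl|\widehat{F}_\pi(y)-\widehat{F}_\pi(x)\bigr|_p>p^{-n}.
\]
Let $m=\operatorname{trunc}_n(x)$, so that $x\in B_{n,m}$ and the condition $|y-x|_p\le p^{-n}$ says exactly that $y\in B_{n,m}$. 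By Definition~\ref{def:triangular_compatible}(c), as $y$ ranges over $B_{n,m}$ the value $\widehat{F}_\pi(y)$ ranges over the image centers $m_{f(a)}$ with $\operatorname{trunc}_n(a)=m$. Every such $a$ is reached, for instance by $y=m_a$, since $\operatorname{trunc}_n(m_a)=m$ and $m_a\in B_{n,m}$. Likewise $\widehat{F}_\pi(x)$ is one of these centers.

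For the forward direction, suppose $m\in E(n)$, that is, $M_{n,m}<n$. By Lemma~\ref{lem:initial_segment}, equivalence~\eqref{eq:Mnm_criterion}, the centers $m_{f(a)}$ for $a$ in the ball do not all agree modulo $p^n$. Hence at least two residue classes modulo $p^n$ occur among them. At least one of these classes differs from the class of $\widehat{F}_\pi(x)$, and we pick a configuration $a$ whose center lies in that class. Then $y=m_a$ satisfies $|\widehat{F}_\pi(y)-\widehat{F}_\pi(x)|_p>p^{-n}$.

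For the converse, suppose $m\notin E(n)$, that is, $M_{n,m}\ge n$. Then by the same criterion all the centers agree modulo $p^n$. So every $y\in B_{n,m}$ gives $|\widehat{F}_\pi(y)-\widehat{F}_\pi(x)|_p\le p^{-n}$, and no witness exists. Summing these equivalences over $n$ gives the displayed formula.

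The remaining assertions follow from results already in place. By definition, the $1$-Lipschitz inequality fails at $x$ at scale $p^{-n}$ precisely when such a $y$ exists, so $\Phi_\pi(x)$ counts the failing scales. Scales $n\ge N$ never fail, by the argument used for (v)$\Leftrightarrow$(vi) in Theorem~\ref{thm:muE_zero_lipschitz}, and $n=0$ never contributes since $E(0)=\emptyset$. Consequently $\Phi_\pi$ takes values in $\{0,\dots,N-1\}$. By Proposition~\ref{prop:haar_integral_form}, $\mu_E(\pi)/p^N=\int\Phi_\pi\,d\lambda_{\mathrm H}$ is the Haar average of $\Phi_\pi$, and since $\lambda_{\mathrm H}$ is a probability measure this average lies in $[0,N-1]$.

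The only delicate step is the witness selection in the forward direction. There one must use that non-agreement of all the centers modulo $p^n$ yields a center disagreeing with the specific center $\widehat{F}_\pi(x)$, not merely some pair of centers that disagree with each other. The pigeonhole remark above settles this, since with at least two classes present, one of them must differ from the class of $\widehat{F}_\pi(x)$.
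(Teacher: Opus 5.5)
Your proposal is correct and follows essentially the same route as the paper: a level-by-level identification of $U^{E}_n$ through criterion~\eqref{eq:Mnm_criterion}, followed by averaging via Proposition~\ref{prop:haar_integral_form}, with $E(0)=\emptyset$ giving the bound $N-1$. The only difference is cosmetic and lies in the witness step. The paper applies the strong triangle inequality to a disagreeing pair $y,z\in B_{n,m}$, while you choose a residue class modulo $p^{n}$ different from that of $\widehat{F}_\pi(x)$, and both rest on the same transitivity of congruence modulo $p^{n}$.
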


\begin{proof}
Let $B_{n,m}$ be the level-$n$ ball containing $x$. The map $\widehat{F}_\pi$ is constant
on balls of radius $p^{-N}$, so its values on $B_{n,m}$ are exactly the centers
$m_{f(a)}$ with $\operatorname{trunc}_n(a)=m$, and two of them satisfy
$|\widehat{F}_\pi(y)-\widehat{F}_\pi(z)|_p>p^{-n}$ if and only if they fail to be
congruent modulo $p^{n}$. By Lemma~\ref{lem:initial_segment} the ball is therefore
expanding, that is $M_{n,m}<n$, exactly when such a pair $y,z\in B_{n,m}$ exists. Suppose
it does. By the strong
triangle inequality
\[
\bigl|\widehat{F}_\pi(y)-\widehat{F}_\pi(z)\bigr|_p\le\max\Bigl\{
\bigl|\widehat{F}_\pi(y)-\widehat{F}_\pi(x)\bigr|_p,\
\bigl|\widehat{F}_\pi(x)-\widehat{F}_\pi(z)\bigr|_p\Bigr\},
\]
so at least one of the two terms on the right exceeds $p^{-n}$, and the corresponding
point lies in $B_{n,m}$, hence at distance at most $p^{-n}$ from $x$. This produces the
witness $y$ required by the displayed set. Conversely such a witness lies in $B_{n,m}$
and shows that the ball is expanding. The two descriptions of $\Phi_\pi$ therefore agree
level by level, and the statement about the average is
Proposition~\ref{prop:haar_integral_form} divided by $p^{N}$, the bound $N-1$ coming
from $E(0)=\emptyset$.
\end{proof}

\begin{theorem}[The saturation locus of $\mu_E$]
\label{thm:muE_saturation}
Let $f\colon\mathcal{C}\to\mathcal{C}$.
\begin{enumerate}[label=\textup{(\alph*)}]
  \item For every ordering $\pi$ one has $\mu_E(\pi)\le (N-1)p^{N}$, with equality if
        and only if no prefix $J^\pi_n$, $1\le n\le N-1$, is autonomous at any
        $u\in\mathbb{F}_p^{J^\pi_n}$.
  \item $\mu_E(\pi)=(N-1)p^{N}$ for every $\pi\in S_N$, that is, $f$ is saturated in the
        sense of Remark~\ref{rem:saturated}, if and only if no set of coordinates $J$
        with $1\le|J|\le N-1$ is autonomous at any $u\in\mathbb{F}_p^{J}$.
  \item $\mu_E(\pi)=0$ for every $\pi\in S_N$ if and only if $f$ acts coordinatewise,
        that is, $f(a)_i=g_i(a_i)$ for maps $g_i\colon\mathbb{F}_p\to\mathbb{F}_p$.
\end{enumerate}
\end{theorem}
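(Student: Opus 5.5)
Everything will be read off the master formula~\eqref{eq:mu_master_formula} of Theorem~\ref{lem:balls_are_blocks}. For part~(a), the count of non-autonomous assignments at level $n$ is at most $|\mathbb{F}_p^{J^\pi_n}|=p^n$. Multiplying by the weight $p^{N-n}$ gives at most $p^N$ per level, and summing over $1\le n\le N-1$ gives $\mu_E(\pi)\le(N-1)p^N$. Since every term is bounded by its own maximum and all weights are positive, equality holds if and only if every level attains its maximum. That means every $u\in\mathbb{F}_p^{J^\pi_n}$ is a point of non-autonomy, for each $1\le n\le N-1$, which is the stated criterion.

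For part~(b), I would apply (a) to each $\pi$. Saturation at every ordering is then equivalent to the following: for every $\pi$ and every $1\le n\le N-1$, the prefix $J^\pi_n$ is autonomous at no $u$. The key observation is that the prefixes $J^\pi_n$, as $\pi$ ranges over $S_N$ and $n$ over $1,\dots,N-1$, are exactly the subsets $J$ with $1\le|J|\le N-1$. Given such a $J$ with $|J|=n$, any ordering listing $J$ first realizes $J=J^\pi_n$, and conversely every prefix has cardinality in that range. Both directions of (b) follow immediately.

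For part~(c), I would use Theorem~\ref{thm:muE_zero_lipschitz}(iv): $\mu_E(\pi)=0$ for all $\pi$ if and only if $f$ is triangular for every $\pi$. Suppose $f(a)_i=g_i(a_i)$. Then each coordinate depends only on itself, so $f$ is triangular for every ordering. For the converse, fix $i$ and take an ordering $\pi$ with $\pi(0)=i$. Triangularity at position $0$ says that $f(a)_i$ is a function $\psi_0(a_i)$ of $a_i$ alone, and we set $g_i:=\psi_0$. Equivalently, Corollary~\ref{cor:min_zero_dag} would require every ordering to be a topological order of $G_f^{\circ}$, which forces $G_f^{\circ}$ to have no edges. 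The direct argument is shorter, so I would use it.

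None of the steps is a genuine obstacle. The only points needing care are the two bookkeeping facts. In (a), equality in a sum of termwise-bounded nonnegative terms with positive weights forces termwise equality. In (b), the family of prefixes over all orderings is exactly the family of proper nonempty subsets, and I would spell this realization argument out explicitly.
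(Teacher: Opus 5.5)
Your proposal is correct and follows the paper's proof: (a) uses the same termwise bound on the master formula~\eqref{eq:mu_master_formula}, and (b) uses the same observation that the prefixes over all orderings are exactly the nonempty proper subsets, together with the fact, immediate from Definition~\ref{def:autonomous_block}, that autonomy of $J$ at $u$ does not depend on the ordering. In (c) you invoke triangularity (iv) at position $0$ where the paper invokes prefix autonomy (iii) for a singleton $J=\{i\}$; these are the same condition, and your version covers $N=1$ without the separate vacuous case the paper handles.
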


\begin{proof}
(a) In~\eqref{eq:mu_master_formula} the level-$n$ count is bounded by the number of
assignments $u$, namely $p^{n}$, so
$\mu_E(\pi)\le\sum_{n=1}^{N-1}p^{n}p^{\,N-n}=(N-1)p^{N}$. Since all the terms are
nonnegative and the weights are positive, equality holds if and only if every count is
maximal, which is the stated condition.

For (b) and (c) we use the following two observations. First, as $\pi$ runs over $S_N$
and $n$ over $\{1,\dots,N-1\}$, the prefixes $J^\pi_n$ run over all subsets $J$ of
$\{0,\dots,N-1\}$ with $1\le|J|\le N-1$: any ordering that lists the elements of a given
such $J$ in its first $|J|$ positions realizes $J$ as $J^\pi_{|J|}$, and conversely
every prefix has size between $1$ and $N-1$. Second, autonomy of $J$ at $u$ depends only
on the pair $(J,u)$ and not on the ordering that produced $J$, because both the fiber
$\mathcal{C}(J,u)$ and the restriction $f(a)|_{J}$ are defined without reference to any
order on $J$.

(b) By (a), the equality $\mu_E(\pi)=(N-1)p^{N}$ holds for all $\pi$ if and only if no
prefix of any ordering is autonomous at any $u$, which by the two observations is the
same as saying that no $J$ with $1\le|J|\le N-1$ is autonomous at any $u$.

(c) By Theorem~\ref{thm:muE_zero_lipschitz}(iii), the equality $\mu_E(\pi)=0$ holds for
all $\pi$ if and only if every prefix of every ordering is autonomous, which by the two
observations is the same as saying that every $J$ with $1\le|J|\le N-1$ is autonomous.
If this holds and $N\ge 2$, take $J=\{i\}$: autonomy of a singleton says exactly that
$f(a)_i$ is a function of $a_i$ alone, which is the stated form. Conversely, if
$f(a)_i=g_i(a_i)$ for every $i$, then $f(a)|_{J}$ is determined by $a|_{J}$ for every
$J$, so every $J$ is autonomous. For $N=1$ both sides of (c) hold vacuously, since
$\mu_E\equiv 0$ and every map of $\mathbb{F}_p$ is of the stated form.
\end{proof}

The saturation locus is not empty, and among a familiar family it is describable in closed form. The statement below labels the cells of the ring by $\mathbb{Z}/N\mathbb{Z}$, and we read that label set as the coordinate set $\{0,\dots,N-1\}$ of Definition~\ref{def:autonomous_block} through least nonnegative residues. Nothing depends on the choice: the criterion of Theorem~\ref{thm:muE_saturation}(b) quantifies over all coordinate subsets $J$ with $1\le|J|\le N-1$ and imposes no order on them, so it is insensitive to which bijection between the two label sets is used, and the cyclic neighborhood $\{j-1,j,j+1\}$ is a statement about $f$ and not about the ordering $\pi$. The statement is proved in the Supplementary Material.

\begin{theorem}[Saturated elementary cellular automata]
\label{thm:eca_saturation_main}
Let $p=2$, index the cells of a ring by $\mathbb{Z}/N\mathbb{Z}$ with $N\ge5$, and let $f(a)_j=r(a_{j-1},a_j,a_{j+1})$ be the elementary cellular automaton with local rule $r$ and periodic boundary. Then $f$ is saturated in the sense of Theorem~\ref{thm:muE_saturation}(b) if and only if $r$ is permutive in an outer variable, that is, if either $x\mapsto r(x,y,z)$ is a bijection for every $(y,z)$ or $z\mapsto r(x,y,z)$ is a bijection for every $(x,y)$. Exactly $28$ of the $256$ elementary rules qualify.
\end{theorem}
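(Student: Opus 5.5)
The plan is to prove the two implications of the equivalence separately through the criterion of Theorem~\ref{thm:muE_saturation}(b), and then to count the permutive rules. Throughout, cells are read modulo $N$.

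\emph{Outer permutivity implies saturation.} Suppose $r$ is permutive in its left variable, and fix $J$ with $1\le|J|\le N-1$ and $u\in\mathbb{F}_2^{J}$. Since $J$ is a nonempty proper subset of the cycle $\mathbb{Z}/N\mathbb{Z}$, there is a cell $j\in J$ with $j-1\notin J$. On the fiber $\mathcal{C}(J,u)$ the cell $a_j=u_j$ is fixed and $a_{j-1}$ is free. Fix $a_{j+1}$: it is $u_{j+1}$ if $j+1\in J$, and any value otherwise. Fix all remaining free cells arbitrarily. Flipping $a_{j-1}$ keeps the configuration in the fiber, because $j-1\ne j+1$ when $N\ge3$. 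By left permutivity the flip also flips $f(a)_j=r(a_{j-1},u_j,a_{j+1})$. So $f(a)|_J$ is not constant on the fiber, and $J$ is not autonomous at $u$. The right-permutive case is the mirror image: choose $j\in J$ with $j+1\notin J$. Theorem~\ref{thm:muE_saturation}(b) then gives saturation.

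\emph{Failure of outer permutivity produces an autonomous block.} Suppose $r$ is permutive in neither outer variable. Then there are $(y,z)$ with $r(0,y,z)=r(1,y,z)$, and $(x',y')$ with $r(x',y',0)=r(x',y',1)$. Take the contiguous block $J=\{1,2,3,4\}$, which is proper because $N\ge5$, and set $u=(u_1,u_2,u_3,u_4)=(y,z,x',y')$. Then check the four outputs on $\mathcal{C}(J,u)$.
\begin{itemize}
\item The interior cells $2$ and $3$ read only cells of $J$, so their outputs are constant on the fiber.
\item Cell $1$ outputs $r(a_0,y,z)$, which does not depend on $a_0$ by the choice of $(y,z)$.
\item Cell $4$ outputs $r(x',y',a_5)$, which does not depend on $a_5$ by the choice of $(x',y')$.
\end{itemize}
When $N=5$ the outside cells $0$ and $5$ coincide. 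This is harmless: each boundary output is separately insensitive to that single free cell. Hence $J$ is autonomous at $u$, and by Theorem~\ref{thm:muE_saturation}(b) $f$ is not saturated. This is the step where $N\ge5$ is used. Contiguous blocks of four cells are what allow the two boundary witnesses to be chosen independently, so I expect the degenerate small-ring coincidences to be the only delicate point.

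\emph{The count.} Left-permutive rules are exactly those of the form $r=x\oplus g(y,z)$ with $g$ arbitrary, giving $16$ rules. Right-permutive rules are $r=z\oplus g(x,y)$, again $16$. The rules permutive on both sides are $x\oplus z\oplus h(y)$, giving $4$. By inclusion–exclusion there are $16+16-4=28$ rules.
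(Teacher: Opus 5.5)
Your proof is correct and complete. A cell $j\in J$ with $j-1\notin J$ (or $j+1\notin J$) turns outer permutivity into non-autonomy of every nonempty proper $J$ at every $u$. The contiguous block $\{1,2,3,4\}$ places the left and right non-permutivity witnesses on disjoint boundary pairs, so they can be chosen independently; this is exactly what forces $N\ge5$, and the coincidence of the two outside cells at $N=5$ is harmless, as you note. Finally, $16+16-4=28$ is the right count.

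I cannot see the paper's own proof, which is in the Supplementary Material; your route through the criterion of Theorem~\ref{thm:muE_saturation}(b) is the argument the hypothesis $N\ge5$ points to.
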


\noindent Permutivity in an outer variable is Hedlund's condition on the local rule of a shift endomorphism~\cite{hedlund1969endomorphisms}. Saturation is therefore a recognizable structural property in that family and not a pathology: the ordering carries no information precisely when the local rule is a bijection in one of the two directions along which information travels.

\begin{proposition}[Order-independent profiles and a sharp intermediate regime]
\label{prop:intermediate_constant}
For a nonempty proper $J\subsetneq\{0,\dots,N-1\}$ put
\[
q_f(J):=\#\bigl\{u\in\mathbb{F}_p^{J}\ :\ J\text{ is not autonomous at }u\bigr\}.
\]
\begin{enumerate}[label=\textup{(\alph*)}]
\item $\mu_E$ is constant on $S_N$ if and only if $q_f(J)$ depends only on $|J|$. In that
case there are integers $0\le\kappa_n\le p^{n}$, $1\le n\le N-1$, with $q_f(J)=\kappa_n$
whenever $|J|=n$, and $\mu_E(\pi)=\sum_{n=1}^{N-1}\kappa_n p^{\,N-n}$ for every $\pi$.
\item If every nonempty proper $J$ fails autonomy at at least one assignment, then
$\mu_E(\pi)\ge(p^{N}-p)/(p-1)$ for every $\pi$, and this bound is attained, for every
prime $p$ and every $N\ge2$, by
\[
f(a)_i:=a_i+\delta(a),\qquad
\delta(a):=\prod_{r=0}^{N-1}\bigl(1-(a_r+1)^{p-1}\bigr),
\]
for which $\kappa_n=1$ at every level. At $p=2$ this is $f(a)_i=a_i+a_0a_1\cdots a_{N-1}$.
\end{enumerate}
\end{proposition}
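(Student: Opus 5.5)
Both parts rest on the master formula \eqref{eq:mu_master_formula} of Theorem~\ref{lem:balls_are_blocks}, which we rewrite as $\mu_E(\pi)=\sum_{n=1}^{N-1}p^{N-n}q_f(J^\pi_n)$. We also reuse the two observations from the proof of Theorem~\ref{thm:muE_saturation}: every nonempty proper $J$ occurs as a prefix $J^\pi_{|J|}$ of some ordering, and $q_f(J)$ does not depend on the ordering that produced $J$.

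\emph{Part (a).} Sufficiency is immediate. If $q_f(J)=\kappa_{|J|}$, then each term of the master formula is $p^{N-n}\kappa_n$ for every $\pi$, and $0\le\kappa_n\le p^n$ because $q_f(J)$ counts a subset of $\mathbb{F}_p^{J}$. For necessity we use a transposition argument. Let $J,J'$ be two sets of size $n$, and first suppose $J'=(J\setminus\{j\})\cup\{k\}$. Choose $\pi$ listing $J\setminus\{j\}$ first, then $j$, then $k$, then the rest, and let $\pi'$ be $\pi$ with positions $n-1$ and $n$ swapped. The two orderings have the same prefixes except at level $n$, where they are $J$ and $J'$. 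Constancy of $\mu_E$ then gives $p^{N-n}(q_f(J)-q_f(J'))=0$. Any two $n$-sets are joined by a chain of such single exchanges, so $q_f$ is constant on $n$-sets. This step is the heart of (a), and it is routine once the adjacent transposition is chosen.

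\emph{Part (b), the lower bound.} Under the hypothesis every prefix count satisfies $q_f(J^\pi_n)\ge1$. Hence $\mu_E(\pi)\ge\sum_{n=1}^{N-1}p^{N-n}=p+\dots+p^{N-1}=(p^N-p)/(p-1)$.

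\emph{Part (b), the example.} Here $\delta$ is the indicator of the all-$(p-1)$ configuration $\mathbf{e}$, since $1-(a_r+1)^{p-1}$ equals $1$ exactly when $a_r=-1$ and $0$ otherwise, by Fermat. So $f$ adds $1$ to every coordinate at $\mathbf{e}$ and is the identity elsewhere. Fix a nonempty proper $J$ and $u\in\mathbb{F}_p^{J}$.
\begin{itemize}
\item If $u\ne\mathbf{e}|_J$, then $\delta$ vanishes on the fiber $\mathcal{C}(J,u)$, so $f(a)|_J=u$ is constant and $J$ is autonomous at $u$.
\item If $u=\mathbf{e}|_J$, the fiber contains $\mathbf{e}$, with $f(\mathbf{e})|_J=u+\mathbf{1}$. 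Because $J$ is proper, the fiber also contains some $a\ne\mathbf{e}$, with $f(a)|_J=u$. Since $\mathbf{1}\neq 0$ in $\mathbb{F}_p^{J}$, the restriction $f(a)|_J$ is not constant on the fiber.
\end{itemize}
Thus $q_f(J)=1$ for every such $J$. The hypothesis of (b) holds, $\kappa_n=1$, and part (a) gives $\mu_E\equiv(p^N-p)/(p-1)$, which attains the bound.

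The main care points are the Fermat evaluation of $\delta$ and checking that properness of $J$ supplies the second fiber element; nothing beyond this is needed. The $p=2$ formula follows because $1-(a_r+1)=a_r$ in $\mathbb{F}_2$.
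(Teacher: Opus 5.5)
Your proposal is correct and follows the paper's proof essentially step for step. The common steps are: an adjacent transposition at positions $n-1,n$ isolates level $n$ in the master formula; the single-exchange (Johnson) graph on $n$-sets is connected; each level contributes at least one non-autonomous assignment, giving the lower bound; and $\delta$ is the indicator of the all-$(p-1)$ state, with properness of $J$ supplying a second element of the fiber.
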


\begin{proof}
(a) Suppose $\mu_E$ is constant. Fix $S$ with $|S|=n-1$ and $i,j\notin S$ distinct, and
let $\pi,\pi'$ agree except for exchanging $i$ and $j$ at the consecutive positions $n-1$
and $n$. Their prefix chains agree at every level but the $n$-th, where they are
$S\cup\{i\}$ and $S\cup\{j\}$, so~\eqref{eq:mu_master_formula} gives
\[
0=\mu_E(\pi)-\mu_E(\pi')=p^{\,N-n}\bigl(q_f(S\cup\{i\})-q_f(S\cup\{j\})\bigr).
\]
Two $n$-sets differing in one element therefore carry the same value, and the Johnson
graph on the $n$-sets is connected, so $q_f$ is constant on each level. The converse is
immediate from~\eqref{eq:mu_master_formula}. Adjacent transpositions isolate a single
level, which is what rules out compensation between levels.

(b) Under the hypothesis every level contributes at least one assignment, so
$\mu_E(\pi)\ge\sum_{n=1}^{N-1}p^{\,N-n}=(p^{N}-p)/(p-1)$. The sum starts at $n=1$ because
the empty prefix is autonomous for trivial reasons, the same fact that empties $E(0)$ and
lowers the ceiling of Theorem~\ref{thm:muE_saturation}(a) from $Np^N$ to $(N-1)p^N$. For
the displayed map, $(a_r+1)^{p-1}$ is $1$ unless $a_r=p-1$, so $\delta$ is the indicator
of the state all of whose coordinates equal $p-1$. On a fiber $\mathcal{C}(J,u)$ one has
$f(a)|_J=(u_i+\delta(a))_{i\in J}$, and since $J\neq\varnothing$ this tuple determines and
is determined by $\delta(a)$. If some $u_i\neq p-1$ then $\delta$ vanishes on the whole
fiber and $J$ is autonomous at $u$. If every $u_i=p-1$ then the fiber contains the state
of all $p-1$, where $\delta=1$, and others where it vanishes, because $J$ is proper. So
$q_f(J)=1$ for every proper $J$, giving $\kappa_n=1$ and
$\mu_E(\pi)=(p^{N}-p)/(p-1)$ for every $\pi$.
\end{proof}

\begin{remark}[Constancy of $\mu_E$ across $S_N$ is a diagnostic only at the maximum]
\label{rem:constancy_diagnostic}
Theorem~\ref{thm:muE_saturation}(b) and (c) describe two constant regimes, at the maximum
and at the minimum, and they are different phenomena: constancy at $(N-1)p^{N}$ says that
no proper set of coordinates ever determines its own next state, constancy at $0$ that the
coordinates act independently. Constancy alone separates neither case, since by
Proposition~\ref{prop:intermediate_constant} the score can be constant on all of $S_N$ at
an intermediate value. The reason is that the map of that proposition commutes with every
permutation of the coordinates, and by Theorem~\ref{lem:balls_are_blocks} such a symmetry
forces the level-$n$ count to depend on $n$ alone. What is a diagnostic is constancy
\emph{at the maximum}.
\end{remark}

\noindent Read across the hierarchy, the $A/E/I$ labels assemble into a discrete, scale-resolved ultrametric landscape on $\mathbb{Z}_p$, whose relief is the ball-image geometry measured by $\Lambda_{n,m}$ and moves with the ordering. The Supplementary Material renders it and tabulates the correspondence between ball type and local $p$-adic dynamics.

%% file: 07_application_athaliana.tex
\section{Worked example: \texorpdfstring{\textit{Arabidopsis thaliana}}{Arabidopsis thaliana} floral GRN}
\label{sec:athaliana}

The variational problem of Section~\ref{sec:stability_measure} is a minimization over $S_N$, and two questions about it are settled only by running it on a map that was not built for the purpose: whether the minimum can be certified at a size where $N!$ is large, and whether $\mu_E$ separates orderings by enough to be worth minimizing. We take the floral development network of \textit{A.\ thaliana}, a Boolean model with $N=13$ genes whose transition table is fixed in~\cite{mendoza1998dynamics,espinosa2004gene}. We use the standard $13$-variable reduction of the published $15$-element network, obtained by fixing the constitutively expressed inputs $\mathrm{LUG}=\mathrm{CLF}=1$ and deleting them; the Boolean form is the one developed in~\cite{perez2022epigenetic} from~\cite{espinosa2004gene}, the deposited table was validated exhaustively against an independent transcription of those rules, and the encoding order and the full table are archived in~\cite{padic_grn_bundle}. The later variant of the same network with minimized rules~\cite{chaos2006genes} is a different map, differing from this one on $2{,}600$ of the $8{,}192$ transitions while sharing all ten fixed points. Running the minimization answers both questions. The biological reading of the output, the network diagram, and the regulator table are in the Supplementary Material.

The map has $\mathcal{C}=\mathbb{F}_2^{13}$, so $|\mathcal{C}|=8192$ and $|S_{13}|=13!$. Exactly four orderings attain the global minimum of $\mu_E$, certified by branch and bound over all of $S_{13}$, and cross-checked by a second exact algorithm that does not enumerate orderings, with both runs in the reproducibility bundle~\cite{padic_grn_bundle}. We use the representative
\begin{equation}
\label{eq:athaliana_optimal_ordering}
\begin{aligned}
\text{position } k:\quad (0,\ 1,\ 2,\ 3,\ 4,\ 5,\ &6,\ 7,\ 8,\ 9,\ 10,\ 11,\ 12)\\
    \|&\\
\text{gene } \pi^*(k):\quad (\text{UFO, EMF1, LFY, TFL1, AP2, FT}, &\text{AG, AP1, SEP, AP3, PI, WUS, FUL}).
\end{aligned}
\end{equation}
Under $\pi^*$ two configurations are close in the $2$-adic metric when they agree on the early genes. The score triple is
\[
(\mu_E,\mu_A,\mu_I)(\pi^*)=(26{,}776,\,53{,}880,\,25{,}840),
\qquad \mu_E+\mu_A+\mu_I=13\cdot 2^{13},
\]
in agreement with Lemma~\ref{lem:mu_checksum}. The ordering in which the network is presented in~\cite{mendoza1998dynamics,espinosa2004gene} gives $\mu_E=80{,}704$ on the same map, $3.014$ times the optimum and $82.1\%$ of the saturation value $(N-1)2^N=98{,}304$. The functional therefore ranges widely over $S_{13}$, and the ordering it selects is a genuine choice among the $13!$ available.

\begin{remark}[The $\mu_E$-optimal set is four-fold degenerate]
\label{rem:symmetry_C2C2}
The global minimum $\mu_E = 26{,}776$ is attained by exactly four orderings. They differ from $\pi^*$ by transposing $\mathrm{EMF1}$ with $\mathrm{LFY}$, which occupy positions $1$ and $2$, and $\mathrm{AP2}$ with $\mathrm{FT}$, at positions $4$ and $5$, positions being numbered from $0$ as in~\eqref{eq:athaliana_optimal_ordering}. All four assign the same $A/E/I$ letter to every configuration at every resolution, and therefore share the full triple $(\mu_E,\mu_A,\mu_I)$ and the level-wise counts $|E(n)|$, $|A(n)|$, $|I(n)|$, verified computationally. Their prefix partitions are not the same: at levels $2$ and $5$, exactly where the two exchangeable pairs enter, the four orderings group the configurations differently. The degeneracy is therefore invisible to the scores and to the labelled $A/E/I$ history, though not to the coordinate-labelled hierarchy itself. The four orderings form a single orbit of the group $\langle\tau_{12}\rangle\times\langle\tau_{45}\rangle\cong C_2\times C_2$ generated by those two transpositions, so $\mathrm{Opt}(f)$ is one coset of that group rather than a single point.
\end{remark}

\paragraph{Fixed points and ball-level types.}
\label{subsec:athaliana_attractors}

The map has exactly ten fixed points, and under $\pi^*$ they fall into four balls at resolution $n=4$. Configuration indices use the encoding of~\eqref{eq:mapi_definition}, $m_a=\sum_{k=0}^{N-1}a_{\pi^*(k)}2^{\,k}$, so that digit $k$ carries the gene $\pi^*(k)$ placed at position $k$ by~\eqref{eq:athaliana_optimal_ordering}; the ball assignment moves with the ordering.

\begin{itemize}
    \item Contracting $B_{1/16}(4)$: four fixed points, $436$, $1972$, $5492$, $6004$.
    \item Contracting $B_{1/16}(5)$: two fixed points, $1973$ and $6005$.
    \item Expanding $B_{1/16}(10)$: two fixed points, $10$ and $2058$.
    \item Expanding $B_{1/16}(11)$: two fixed points, $11$ and $2059$.
\end{itemize}

Six of the ten configurations therefore lie in level-$4$ contracting balls and four in level-$4$ expanding balls, a distinction the fixed-point equation does not make: as configurations with $f(a)=a$ they are indistinguishable. The words contracting and expanding describe the level-$4$ image-radius geometry realized by $F_4$. They do not classify the fixed configurations of $f$ as attractors or repellers of its functional graph, whose combinatorial basins the labels do not measure. Figure~\ref{fig:hierarchical_padic_athaliana} shows the cascade with these four balls marked, and the chains themselves are not uniformly of one type, as the words of Table~\ref{tab:fp_sequences} show.

\begin{sidewaysfigure}
    \centering
    \includegraphics[width=0.95\textheight]{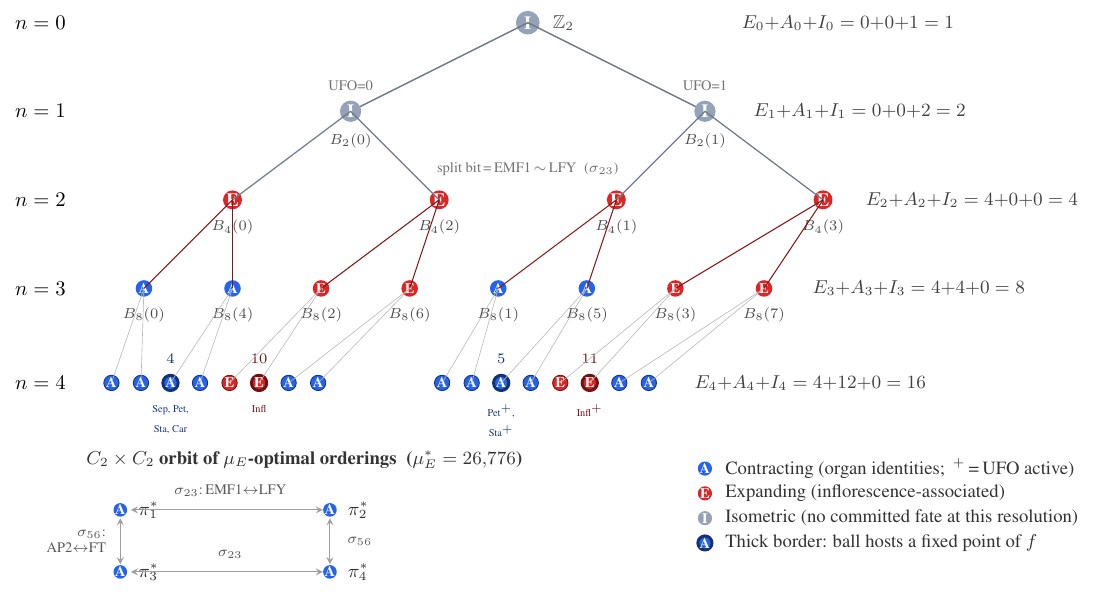}
    \caption{Hierarchical $2$-adic ball cascade for the \textit{A.\ thaliana} floral GRN under $\pi^*$, drawn to resolution $n=4$. The ten fixed points occupy four balls at that resolution, drawn with a thick border: the two contracting ones hold six of them and read IEAA, the two expanding ones hold the remaining four and read IEEE (Table~\ref{tab:fp_sequences}). The cascade drawn is the one determined by $\pi^*$. The inset shows the $C_2\times C_2$ orbit of the four $\mu_E$-optimal orderings of Remark~\ref{rem:symmetry_C2C2}, which agree with it on the type of every configuration at every resolution while grouping the configurations differently at level~$2$.}
    \label{fig:hierarchical_padic_athaliana}
\end{sidewaysfigure}

\paragraph{Scale-resolved classification.}

For each fixed point we record its $A/E/I$ word (Definition~\ref{def:aei_word}) across the first four resolutions, the prefix of the full word of length $N-1=12$. These are the scales at which the ten points separate.

\begin{table}[htbp]
\centering
\caption{Classification sequence of the ten fixed points under $\pi^*$. Seq.\ = classification at levels $n=1,2,3,4$, the four-scale prefix of the full length-$(N{-}1)$ word. Level $n=1$ is I for all balls, since each level-1 ball maps to itself. A=contracting, E=expanding, I=isometric, computed from $\Lambda_{n,m}$. The state column records the identification made in~\cite{mendoza1998dynamics,espinosa2004gene}.}
\label{tab:fp_sequences}
\footnotesize
\begin{tabular}{@{}rclll@{}}
\toprule
Config & Ball & Seq. & Ball type & State \\
\midrule
436 & $B_{1/16}(4)$ & IEAA & Contracting & Sepal \\
1972 & $B_{1/16}(4)$ & IEAA & Contracting & Petal \\
5492 & $B_{1/16}(4)$ & IEAA & Contracting & Carpel \\
6004 & $B_{1/16}(4)$ & IEAA & Contracting & Stamen \\
1973 & $B_{1/16}(5)$ & IEAA & Contracting & Petal$+$UFO \\
6005 & $B_{1/16}(5)$ & IEAA & Contracting & Stamen$+$UFO \\
\midrule
10 & $B_{1/16}(10)$ & IEEE & Expanding & Inflorescence \\
2058 & $B_{1/16}(10)$ & IEEE & Expanding & Inflorescence \\
11 & $B_{1/16}(11)$ & IEEE & Expanding & Inflorescence \\
2059 & $B_{1/16}(11)$ & IEEE & Expanding & Inflorescence \\
\bottomrule
\end{tabular}
\end{table}

The word splits the ten points cleanly in two. Both classes expand at level $2$. In one, the expansion gives way to contraction at levels $3$ and $4$, the ball-level multiplier crossing $1$, while in the other it persists at every scale. A label read at one resolution records a single cross-section of that transition, while the word records where the crossover falls and whether it persists. The normalized pointwise one-bit sensitivity $s_f(a)=N^{-1}\sum_{i}\mathbf{1}[f(a\oplus e_i)\neq f(a)]$, the summand of the standard average sensitivity~\cite{kadelka2024metaanalysis}, which on a fixed point reads $\mathbf{1}[f(a\oplus e_i)\neq a]$ since $f(a)=a$, takes the value $7/13$ on all four points of the second class, and that value lies inside the range $[6/13,\,11/13]$ spanned by the first: one point of the first class is strictly less sensitive than every point of the second, and another matches them exactly, so no threshold on $s_f$ separates the two. What separates them is the resolution-by-resolution profile, an object of the ball tree.

\paragraph{What the ordering finds on its own.}

The four coarsest positions of $\pi^*$ are occupied by UFO, EMF1, LFY and TFL1. UFO is the one coordinate the dynamics conserves, $\mathrm{UFO}'=\mathrm{UFO}$, so it is the unique source of the dependency digraph once its loops are deleted (Definition~\ref{def:dependency_digraph}) and the unique coordinate whose singleton prefix is autonomous, and the minimization sends it to the one level where an autonomous prefix costs nothing. EMF1, LFY and TFL1 are documented regulators of the floral transition~\cite{mendoza1998dynamics,espinosa2004gene,chavez2022flowering}, and none of them is a source, so their positions at levels two to four are decided by the functional and not by the shape of the digraph. UFO has a documented role of its own, in organ identity, recorded with the rest in the Supplementary Material. A study of this and other regulatory networks, with mutant perturbations and cross-organism comparisons, appears in~\cite{perez2026padic_applications}, with its own aims and methods, and a complementary topological analysis of the same network, without $p$-adic methods, is in~\cite{cortes2026forest}. No gene label, expression measurement, or prior ranking enters the optimization, which sees the transition table and the encoding rule and nothing else.

%% file: 08_discussion.tex
\section{Discussion}
\label{sec:discussion}

The construction has two layers, and the interest lies in how they meet. The analytic layer embeds a finite system into $\mathbb{Z}_p$ through a choice of ordering and realizes its dynamics by rational approximants over $\mathbb{C}_p$ whose ball images have prescribed radii (Theorem~\ref{thm:existence_approximations}). The finite layer computes the radii, the coarse multipliers, the $A/E/I$ labels and the scores from $(f,\iota)$ (Proposition~\ref{thm:intrinsic_mu}), and Proposition~\ref{prop:radius_attained} is where the two agree: the combinatorial radii are the ones the rational maps realize. Since the classification moves with the embedding, the ordering becomes a variable of the problem, and Theorem~\ref{thm:muE_zero_lipschitz} identifies what minimizing over it is asking for.

\paragraph{Relation to prior work.}
The framework draws on three established lines, and it is against the first of them that its new content is measured. Non-Archimedean dynamics and automata theory use the inverse-limit picture $\mathbb{Z}_p=\varprojlim_k\mathbb{Z}/p^k\mathbb{Z}$ and balls as finite-resolution neighborhoods~\cite{Anashin2009,anashin2021automata_chapter,arrowsmith1994padic,khrennikov2004padicdynamics}. Theorem~\ref{thm:muE_zero_lipschitz} is what connects the present construction to that line: the class its vanishing locus singles out is the class of automaton functions~\cite{anashin2021automata_chapter}, and the new content is the graded version, a functional that measures the failure at each resolution instead of testing for it globally. $p$-adic biology uses ultrametric balls as observational equivalence classes in hierarchical state spaces~\cite{dragovich2021padicBio,khrennikov2010ultrametric,khrennikov2017protein}. Boolean-network theory treats reduced models through projections, latent variables, attractors and basin structure~\cite{shmulevich2002pbn,saadatpour2013boolean,naldi2011dynamically}, and ranks inputs inside each update function or groups nodes into modules~\cite{dimitrova2022canalizing,murrugarra2024modular}. Those methods operate on one update function or on a fixed grouping, while the ordering $\pi^*$ is global and dynamical, minimizing a scale-resolved functional over the joint map. A recent perspective on canalization~\cite{kadelka2026canalization} observes that beyond the Boolean case the notion of a small perturbation is itself model-dependent, since a Hamming step no longer carries a fixed biological meaning, and reports that no canonical replacement has emerged. The orderings supply a structured family of ultrametrics on the same finite state space, together with a principle for comparing them, which is one way of making that choice a variable rather than a convention.

\paragraph{Complexity.}
The cost $O(Np^N)$ of evaluating $\mu_E$ at one ordering presupposes that $f$ is given by its full transition table, and all the statements here are relative to that input model. Under it, evaluating $\mu_E(\pi)$ is polynomial in the input size, and deciding whether $\min_\pi\mu_E(\pi)=0$ lies in $\mathrm{P}$ by Corollary~\ref{cor:min_zero_dag}. The minimization itself is not harder: since $\mu_E$ depends on an ordering only through its chain of prefixes, $\min_\pi\mu_E(\pi)$ is a shortest-path problem on the subset lattice, and the reproducibility record uses that formulation as an independent exact cross-check without enumerating $S_N$. Under a succinct presentation of $f$, by update formulas or by a circuit, the size parameter is $N$ rather than $p^N$ and the complexity question is a different one, whose analysis needs its own input model. The chain cost is not submodular, so greedy approximation guarantees are not available from that route: Remark~\ref{rem:submodularity_negative} settles it in the negative with a counterexample on the network of Section~\ref{sec:athaliana}.

\paragraph{Other rankings of the coordinates.}
Ranking the coordinates of a finite system is a familiar aim, and the existing rankings are of a different kind. Network centrality orders them by the topology of the interaction graph~\cite{barabasi2004network}, fibration symmetry and input symmetry collapse coordinates whose local update data coincide~\cite{morone2020fibration,rozum2025cana}, and the isometries of the Boolean hypercube act on the state space by symmetries that leave the Hamming metric fixed~\cite{fabremonplaisir2021isometries}. Each returns a ranking or a symmetry at one scale. What $\mu_E$ adds is that the ordering is chosen by a functional defined across all the scales at once, and that the group acting is $S_N$ on the digit positions of the embedding, which changes the metric rather than preserving it. The degeneracy of $\mathrm{Opt}(f)$ recorded in Remark~\ref{rem:symmetry_C2C2} is the trace of that action.

\paragraph{Scope.}
The construction applies verbatim to finite systems over $\mathbb{F}_p^N$ for any prime $p$. Minimizing $\mu_E$ over $S_N$ is a combinatorial problem, certified here at $N=13$ by branch and bound.

The scores are indifferent to trajectory timing once $f$ is fixed. They read the images of $f$ across the ball hierarchy rather than any trajectory, so no notion of synchronous or asynchronous update enters them, and the fixed points that carry the $A/E/I$ word are maintained under any update order~\cite{bensussen2025analytical}. What does depend on the schedule is the transient structure. The convergence statement in the Supplementary Material is made for the synchronous update, and under an asynchronous schedule the same fixed configurations persist while their combinatorial basins and approach paths may change~\cite{bensussen2025analytical}.

\begin{remark}[The chain cost of $\mu_E$ is not submodular]
\label{rem:submodularity_negative}
The minimization $\pi^*\in\operatorname{arg\,min}_\pi\mu_E(\pi)$ runs over the finite group $S_N$, so the structure to look for is combinatorial. It would live in the dependence of $\mu_E$ on which coordinates occupy the coarsest positions. Theorem~\ref{lem:balls_are_blocks} makes this precise: writing, for a subset $J\subseteq\{0,\dots,N-1\}$,\[c(J)\;=\;p^{\,N-|J|}\cdot\#\{u\in\mathbb{F}_p^{J}\;:\;J\text{ is not autonomous at }u\},\]that is, $c(J)=p^{\,N-|J|}q_f(J)$ in the notation of Proposition~\ref{prop:intermediate_constant}, one has $\mu_E(\pi)=\sum_{n}c(J^\pi_n)$ over the prefix chain of $\pi$, so $\mu_E$ is the cost of a chain in the Boolean lattice under the set function $c$. The set function $c$ is not submodular. It already fails to be for the \textit{A.\ thaliana} map of Section~\ref{sec:athaliana}, where an explicit pair of coordinate subsets reverses the submodular inequality; the four values and their exhaustive verification are in the Supplementary Material, and the script that recomputes them from the transition table is in the reproducibility bundle~\cite{padic_grn_bundle}. Greedy approximation guarantees are therefore not available from submodularity of $c$, and what remains open is narrower and better posed: to characterize the maps for which $c$ \emph{is} submodular, or to find a different discrete convexity that the chain cost does satisfy. The two decision problems already separate: the vanishing question amounts to testing the dependency digraph for acyclicity, while $\min_\pi\mu_E(\pi)\le K$ with $K>0$ is settled here only at $N=13$, by branch and bound.
\end{remark}

\paragraph{Further questions.}
Three directions arise naturally from the construction and fall outside its aims. The first two were named above: characterizing the maps whose chain cost is submodular (Remark~\ref{rem:submodularity_negative}), and the complexity of minimizing $\mu_E$ when $f$ is presented succinctly, by circuits or by Boolean update rules. Mixed-radix state spaces, where the alphabet size varies with the coordinate, call for a profinite or adelic treatment rather than a single prime. A related limitation is that the distance used here grades a perturbation by which coordinate it touches and how deep that coordinate sits, and not by how far the value moves within it: over $\mathbb{F}_3$ the digits $0$, $1$ and $2$ are pairwise equidistant. Ordering the levels of a single coordinate is a second axis, and combining the two would need either a finer alphabet or a restriction of the admissible orderings. The results developed here are independent of all three, and each merits a separate treatment.

Every finite system placed on $\mathbb{Z}_p$ by an ordering carries a graded distance to the automaton class, and the ordering that minimizes it belongs to the system and not to the way the system happened to be written down.

%% file: main-preprint.bbl
\begin{thebibliography}{10}

\bibitem{anashin2021automata_chapter}
Vladimir Anashin.
\newblock The $p$-adic theory of automata functions.
\newblock In W.~A. Z\'u\~niga Galindo and B.~Toni, editors, {\em Advances in
  Non-Archimedean Analysis and Applications: The $p$-adic Methodology in
  {STEAM-H}}, STEAM-H: Science, Technology, Engineering, Agriculture,
  Mathematics \& Health, pages 9--113. Springer, Cham, 2021.

\bibitem{Anashin2009}
Vladimir Anashin and Andrei Khrennikov.
\newblock {\em Applied Algebraic Dynamics}, volume~49 of {\em De Gruyter
  Expositions in Mathematics}.
\newblock Walter de Gruyter, Berlin, 2009.

\bibitem{arrowsmith1994padic}
David~K. Arrowsmith and Franco Vivaldi.
\newblock Geometry of \texorpdfstring{$p$}{p}-adic {Siegel} discs.
\newblock {\em Physica D: Nonlinear Phenomena}, 71(1-2):222--236, 1994.

\bibitem{barabasi2004network}
Albert-L{\'a}szl{\'o} Barab{\'a}si and Zoltan~N Oltvai.
\newblock Network biology: Understanding the cell's functional organization.
\newblock {\em Nature Reviews Genetics}, 5(2):101--113, 2004.

\bibitem{benedetto2019dynamics}
Robert~L Benedetto.
\newblock {\em Dynamics in one non-archimedean variable}, volume 198 of {\em
  Graduate Studies in Mathematics}.
\newblock American Mathematical Society, Providence, RI, 2019.

\bibitem{bensussen2025analytical}
Antonio Bensussen, J.~Arturo Arciniega-Gonz\'alez, Elena~R. \'Alvarez-Buylla,
  and Juan~Carlos Mart\'inez-Garc\'ia.
\newblock Analytical approach of synchronous and asynchronous update schemes
  applied to solving biological {Boolean} networks.
\newblock {\em PLOS One}, 20(9):e0319240, 2025.

\bibitem{chaos2006genes}
\'Alvaro Chaos, Max Aldana, Carlos Espinosa-Soto, Berenice Garc\'ia Ponce~de
  Le\'on, Adriana Garay~Arroyo, and Elena~R. \'Alvarez-Buylla.
\newblock From genes to flower patterns and evolution: dynamic models of gene
  regulatory networks.
\newblock {\em Journal of Plant Growth Regulation}, 25(4):278--289, 2006.

\bibitem{chavez2022flowering}
Elva~C. Ch{\'a}vez-Hern{\'a}ndez, Stella Quiroz, Berenice Garc{\'\i}a-Ponce,
  and Elena~R. {\'A}lvarez-Buylla.
\newblock The flowering transition pathways converge into a complex gene
  regulatory network that underlies the phase changes of the shoot apical
  meristem in \textit{{A}rabidopsis thaliana}.
\newblock {\em Frontiers in Plant Science}, 13:852047, 2022.

\bibitem{cortes2026forest}
Yuriria Cort\'es-Poza and J.~Rogelio P\'erez-Buend\'ia.
\newblock Topological structure of epigenetic forests in flower morphogenesis.
\newblock {\em Bulletin of Mathematical Biology}, 88(7):124, 2026.

\bibitem{dimitrova2022canalizing}
Elena Dimitrova, Brandilyn Stigler, Claus Kadelka, and David Murrugarra.
\newblock Revealing the canalizing structure of {Boolean} functions:
  {Algorithms} and applications.
\newblock {\em Automatica}, 146:110630, 2022.

\bibitem{dragovich2021padicBio}
Branko Dragovich, Andrei~Yu. Khrennikov, Sergei~V. Kozyrev, and
  Nata{\v{s}}a~{\v{Z}}. Mi{\v{s}}i{\'c}.
\newblock p-adic mathematics and theoretical biology.
\newblock {\em Biosystems}, 199:104288, 2021.

\bibitem{espinosa2004gene}
C.~Espinosa-Soto, P.~Padilla-Longoria, and E.~R. \'Alvarez-Buylla.
\newblock A gene regulatory network model for cell-fate determination during
  \textit{Arabidopsis thaliana} flower development that is robust and recovers
  experimental gene expression profiles.
\newblock {\em The Plant Cell}, 16(11):2923--2939, 2004.

\bibitem{fabremonplaisir2021isometries}
Jean Fabre-Monplaisir, Brigitte Moss\'e, and \'Elisabeth Remy.
\newblock Isometries of the hypercube: A tool for {Boolean} regulatory networks
  analysis.
\newblock {\em Physica D: Nonlinear Phenomena}, 424:132831, 2021.

\bibitem{hedlund1969endomorphisms}
G.~A. Hedlund.
\newblock Endomorphisms and automorphisms of the shift dynamical system.
\newblock {\em Mathematical Systems Theory}, 3(4):320--375, 1969.

\bibitem{kadelka2026canalization}
Claus Kadelka.
\newblock Canalization as a stabilizing principle of gene regulatory networks:
  a discrete dynamical systems perspective.
\newblock {\em npj Systems Biology and Applications}, 12:30, 2026.

\bibitem{kadelka2024metaanalysis}
Claus Kadelka, Taras-Michael Butrie, Evan Hilton, Jack Kinseth, Addison
  Schmidt, and Haris Serdarevic.
\newblock A meta-analysis of {B}oolean network models reveals design principles
  of gene regulatory networks.
\newblock {\em Science Advances}, 10(2):eadj0822, 2024.

\bibitem{khrennikov2010ultrametric}
Andrei Khrennikov.
\newblock Modelling of psychological behavior on the basis of ultrametric
  mental space: Encoding of categories by balls.
\newblock {\em p-Adic Numbers, Ultrametric Analysis and Applications},
  2(1):1--20, 2010.

\bibitem{khrennikov2017protein}
Andrei Khrennikov and Ekaterina Yurova.
\newblock Automaton model of protein: Dynamics of conformational and functional
  states.
\newblock {\em Progress in Biophysics and Molecular Biology}, 130:2--14, 2017.

\bibitem{khrennikov2004padicdynamics}
Andrei~Yu. Khrennikov and Marcus Nilsson.
\newblock {\em p-Adic Deterministic and Random Dynamics}, volume 574 of {\em
  Mathematics and Its Applications}.
\newblock Kluwer Academic Publishers, Dordrecht, 2004.

\bibitem{leoncardenal2022lipschitz}
Edwin Le\'on-Cardenal, John~Jaime Rodr\'iguez-Vega, and Fausto
  Bol\'ivar-Barbosa.
\newblock {A New Class of $p$-Adic Lipschitz Functions and Multidimensional
  Hensel's Lemma}.
\newblock {\em p-Adic Numbers, Ultrametric Analysis and Applications},
  14(1):1--13, 2022.

\bibitem{rozum2025cana}
Austin~M. Marcus, Jordan Rozum, Herbert Sizek, and Luis~M. Rocha.
\newblock {CANA} v1.0.0: efficient quantification of canalization in automata
  networks.
\newblock {\em Bioinformatics}, 41(10):btaf461, 2025.

\bibitem{mendoza1998dynamics}
Luis Mendoza and Elena~R Alvarez-Buylla.
\newblock Dynamics of the genetic regulatory network for arabidopsis thaliana
  flower morphogenesis.
\newblock {\em Journal of Theoretical Biology}, 193(2):307--319, 1998.

\bibitem{morone2020fibration}
Flaviano Morone, Ian Leifer, and Hern{\'a}n~A. Makse.
\newblock Fibration symmetries uncover the building blocks of biological
  networks.
\newblock {\em Proceedings of the National Academy of Sciences},
  117(15):8306--8314, 2020.

\bibitem{murrugarra2024modular}
David Murrugarra, Alan Veliz-Cuba, Elena Dimitrova, Claus Kadelka, Matthew
  Wheeler, and Reinhard Laubenbacher.
\newblock Modular control of {Boolean} network models.
\newblock {\em Bulletin of Mathematical Biology}, 87(7):91, 2025.

\bibitem{naldi2011dynamically}
Aur{\'e}lien Naldi, Elisabeth Remy, Denis Thieffry, and Claudine Chaouiya.
\newblock Dynamically consistent reduction of logical regulatory graphs.
\newblock {\em Theoretical Computer Science}, 412(21):2207--2218, 2011.

\bibitem{rogelio2023gluing}
V{\'\i}ctor Nopal~Coello and J.~Rogelio P{\'e}rez-Buend{\'\i}a.
\newblock Gluing dynamics: $\varepsilon$-precision in solving a non-archimedean
  inverse problem.
\newblock {\em Bolet{\'\i}n de la Sociedad Matem{\'a}tica Mexicana}, 31(2):52,
  2025.

\bibitem{perez2026interpreters}
J.~Rogelio P\'erez-Buend\'ia.
\newblock {Rational Interpreters for Discrete Dynamics: Existence, Exactness,
  and Decomposition over $p$-adic Fields}.
\newblock Preprint, arXiv:2602.05433 [math.DS], 2026.
\newblock \url{https://doi.org/10.48550/arXiv.2602.05433}.

\bibitem{perez2022epigenetic}
J.~Rogelio P\'erez-Buend\'ia, Y.~Cort\'es-Poza, and P.~Padilla-Longoria.
\newblock Epigenetic forest and flower morphogenesis.
\newblock {\em Computational Biology and Chemistry}, 98:107667, 2022.

\bibitem{padic_grn_bundle}
J.~Rogelio P\'erez-Buend\'ia and V\'{\i}ctor Nopal-Coello.
\newblock {Reproducibility bundle: Hierarchical Expansion of Finite Discrete
  Dynamical Systems. A Non-Archimedean Variational Theory over Coordinate
  Orderings}.
\newblock Zenodo, 2026.
\newblock \url{https://doi.org/10.5281/zenodo.20709159}. Transition table,
  analysis code, exact search over orderings, and verification scripts.

\bibitem{perez2026padic_applications}
J.~Rogelio P\'erez-Buend\'ia, V\'{\i}ctor Nopal-Coello, and Yuriria
  Cort\'es-Poza.
\newblock {Regulatory $p$-Adic Orderings from Stability in Gene Networks:
  Applications and Validation in Three Developmental Systems}.
\newblock Preprint, Zenodo, 2026.
\newblock \url{https://doi.org/10.5281/zenodo.19426870}.

\bibitem{riveraletelier2003dynamique}
Juan Rivera-Letelier.
\newblock Dynamique des fonctions rationnelles sur des corps locaux.
\newblock {\em Ast\'erisque}, (287):147--230, 2003.
\newblock Doctoral thesis, Universit\'e Paris-Sud (Orsay), 2000.

\bibitem{saadatpour2013boolean}
Assieh Saadatpour, R\'eka Albert, and Timothy~C. Reluga.
\newblock A reduction method for {Boolean} network models proven to conserve
  attractors.
\newblock {\em SIAM Journal on Applied Dynamical Systems}, 12(4):1997--2011,
  2013.

\bibitem{shmulevich2002pbn}
Ilya Shmulevich, Edward~R. Dougherty, Seungchan Kim, and Wei Zhang.
\newblock Probabilistic boolean networks: a rule-based uncertainty model for
  gene regulatory networks.
\newblock {\em Bioinformatics}, 18(2):261--274, 2002.

\bibitem{zunigagalindo2025padicbook}
W.~A. {Z{\'u}{\~n}iga-Galindo}.
\newblock {\em $p$-Adic Analysis: Stochastic Processes and Pseudo-Differential
  Equations}, volume~11 of {\em Advances in Analysis and Geometry}.
\newblock De Gruyter, Berlin, 2024.

\bibitem{Zuniga-Galindo2022}
Wilson~A. {Z{\'u}{\~n}iga-Galindo}.
\newblock p-adic analysis: A quick introduction.
\newblock In Carlos Galindo, Alejandro Melle~Hern{\'a}ndez, Julio
  Moyano-Fern{\'a}ndez, and Wilson {Z{\'u}{\~n}iga-Galindo}, editors, {\em
  p-Adic Analysis, Arithmetic and Singularities}, volume 778 of {\em
  Contemporary Mathematics}, pages 177--221. American Mathematical Society,
  Providence, RI, 2022.
\newblock Mini-Course, L. Santal\'o Research Summer School 2019, Santander,
  Spain.

\end{thebibliography}
